\documentclass[12pt]{amsart}

\usepackage{fullpage}
\usepackage{mathtools}
\usepackage{amssymb,amsmath,amsthm,amscd,mathrsfs,graphicx}
\usepackage[dvipsnames]{xcolor}
\usepackage{bm}
\usepackage{hyperref}
\usepackage{dsfont}
\usepackage{enumerate}
\usepackage{epsfig}
\usepackage{float, graphicx}
\usepackage{latexsym, amsxtra}
\usepackage{pdfpages}
\usepackage{multicol}
\usepackage[normalem]{ulem}
\usepackage{psfrag}
\usepackage{stmaryrd}
\usepackage{tikz}
\usepackage[T1]{fontenc}
\usepackage{url}
\usepackage{verbatim}
\usepackage{indentfirst}
\usepackage{tikz-cd}
\usepackage{url}
\usepackage[all,pdf]{xy}
\usepackage{color}
\usepackage{verbatim}

\usepackage{cleveref}

\makeatletter
\def\thm@space@setup{%
  \thm@preskip=2ex \thm@postskip=2ex
}
\makeatother

\hypersetup{hidelinks}

\numberwithin{equation}{subsection}
\theoremstyle{plain}

\newtheorem{thm}[subsection]{Theorem~}
\newtheorem{lem}[subsection]{Lemma~}
\newtheorem{prop}[subsection]{Proposition~}

\newtheorem{cor}[subsection]{Corollary~}

\newtheorem{defn}[subsection]{Definition~}

\theoremstyle{remark}
\newtheorem{rmk}[subsection]{Remark~}

\AddToHook{env/thm/begin}{\crefalias{subsection}{thm}}
\AddToHook{env/lem/begin}{\crefalias{subsection}{lem}}
\AddToHook{env/prop/begin}{\crefalias{subsection}{prop}}
\AddToHook{env/examp/begin}{\crefalias{subsection}{examp}}
\AddToHook{env/cor/begin}{\crefalias{subsection}{cor}}

\newcommand\s[1]{\mathscr{#1}}
\renewcommand\b[1]{\mathbb{#1}}

\newcommand\m[1]{\mathrm{#1}}
\renewcommand\c[1]{\mathcal{#1}}
\renewcommand\bf[1]{\mathbf{#1}}

\usepackage{todonotes}

\title{A Comparison Theorem for $p$-adic GKZ Hypergeometric $D$-Modules and Complexes}

\author[C. Liu]{Chenhan Liu}
\address{Department of Mathematical Sciences, Tsinghua University, Beijing 100084, P. R. China}
\email{liu-ch22@mails.tsinghua.edu.cn}

\author[H. Zhang]{Hao Zhang}
\address{School of Mathematics and Shing-Tung Yau Center, Southeast University, Nanjing 211189, People’s
Republic of China}
\email{hzhang21@seu.edu.cn}
\date{February 2026}

\begin{document}

\maketitle

\begin{abstract}

We compare the $p$-adic GKZ hypergeometric complex with the hypergeometric arithmetic $\mathcal D$-module defined via six functors. Under the $p$-adic semi-nonresonance condition, the natural comparison morphism is a quasi-isomorphism. We prove concentration of the $p$-adic GKZ hypergeometric complex in degree $n$, establish overholonomicity under another non-Liouville hypothesis. When $(q-1)\gamma\in\mathbb Z^n$, we construct a Frobenius structure on its $n$-th cohomology and prove that its restriction to the nondegenerate locus is an overconvergent $F$-isocrystal of rank $n!\operatorname{vol}(\Delta)$.

\end{abstract}

\section*{Introduction}

\subsection{The GKZ hypergeometric system}

Let $A=(w_{ij})\in M_{n\times N}(\mathbb Z)$ be a matrix of rank $n$, with column vectors $\mathbf{w}_1,\cdots,\mathbf{w}_N\in \mathbb{Z}^n$. For parameters $\gamma_1, \ldots, \gamma_n \in \mathbb{C}$, Gelfand, Kapranov and Zelevinsky \cite{GKZ} define the $A$-hypergeometric system to be the system of differential equations consisting of Euler operators $Z_{i,\gamma}$ and toric operators $\Box_{\lambda}$:
\[\begin{aligned}
& Z_{i,\gamma}(f)=\sum_{j=1}^N w_{i j} x_j \frac{\partial f}{\partial x_j}+\gamma_i f=0 \quad(i=1, \ldots, n), \\
&\Box_{\lambda}(f)=\prod_{\lambda_j>0}\left(\frac{\partial}{\partial x_j}\right)^{\lambda_j} f-\prod_{\lambda_j<0}\left(\frac{\partial}{\partial x_j}\right)^{-\lambda_j} f=0,
\end{aligned}\]
where for the second system of equations, $\lambda=(\lambda_1, \ldots, \lambda_N) \in \mathbb{Z}^N$ ranges over the family of integral linear relations 
\[\Lambda=\big\{(\lambda_1,\ldots,\lambda_N):\sum_{j=1}^N \lambda_j \mathbf{w}_j=0\big\}\] among $\mathbf{w}_1, \ldots, \mathbf{w}_N$, and $f$ is a holomorphic function of $x_1,\ldots,x_N$. We now call the $A$-hypergeometric system the \textit{GKZ hypergeometric system}. 

In the following theorem, we present the fundamental properties of the GKZ hypergeometric system relevant to this article. These properties can be found in Gelfand-Kapranov-Zelevinsky \cite{GKZ}, Adolphson \cite{Ado94}, and Adolphson-Sperber \cite{AS10}.

\begin{thm}

\begin{itemize}

\item[(i)] The GKZ hypergeometric system is holonomic.

\item[(ii)] 
Suppose $(\gamma_1,\dots,\gamma_n)$ satisfies the semi-nonresonance condition. 
Then the dimension of the space of GKZ hypergeometric functions at a generic point is
$n! \cdot \operatorname{vol}(\Delta)/[\mathbb Z^n : M'],$
where $\Delta = \text{Conv}\{0, \mathbf{w}_1, \dots, \mathbf{w}_N\}$ is the convex hull, and $M'$ is the sublattice of $\mathbb Z^n$ generated by $\mathbf{w}_1, \dots, \mathbf{w}_N$.

\item[(iii)] Suppose $(\gamma_1,\dots,\gamma_n)$ satisfies the nonresonance condition. Then the GKZ hypergeometric system admits a cohomological realization in terms of an exponentially twisted relative de Rham complex on an algebraic torus.
\end{itemize}
\end{thm}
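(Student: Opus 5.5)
The plan is to recover the three items by the standard routes of \cite{GKZ}, \cite{Ado94} and \cite{AS10}. Throughout we work with the $D$-module $M_\gamma = D/(D\cdot\{Z_{i,\gamma}\}+D\cdot\{\Box_\lambda\})$ on $\mathbb A^N$, where $D=\mathbb C[x_1,\dots,x_N]\langle\partial_1,\dots,\partial_N\rangle$.

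\emph{Part (i).} We show that the characteristic variety of $M_\gamma$ has dimension $N$. The symbols of the box operators generate the toric ideal $I_A\subset\mathbb C[\xi]$, whose zero set is the affine toric variety $\overline{O}$. The symbols of the Euler operators cut out $\sum_j w_{ij}x_j\xi_j=0$. Stratify $\overline{O}$ into finitely many torus orbits, one for each face $\tau$ of the cone spanned by the $\mathbf w_j$. Over a point $\xi$ in the orbit attached to $\tau$, the Euler symbols impose linearly independent conditions on $x$, and their number equals the dimension of that orbit. So each stratum contributes a piece of dimension at most $N$, and holonomicity follows (Adolphson \cite{Ado94}). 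Nothing in this step depends on $\gamma$.

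\emph{Part (ii).} We argue by a flat-degeneration / Euler-Koszul argument. First, the generic rank equals the holonomic rank of $M_\gamma$. Next, filter by a generic weight vector and pass to the commutative Euler--Koszul complex on $\mathbb C[\partial]/I_A$, following Matusevich--Miller--Walther. Semi-nonresonance guarantees that none of the local cohomology modules of $\mathbb C[\partial]/I_A$ meets $-\gamma$ in their quasi-degrees, in the form Adolphson--Sperber use. This makes the higher Euler--Koszul homology vanish, so the rank equals $\deg(\mathbb C[\partial]/I_A)$. Finally, the degree of the toric ring is the normalized volume $n!\operatorname{vol}(\Delta)$, divided by the index $[\mathbb Z^n:M']$ because the lattice relevant to the toric ring is $M'$ and not $\mathbb Z^n$. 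The main obstacle is this vanishing step, which means matching semi-nonresonance with the exceptional set. I would first establish it at the level of the Koszul complex graded by $\mathbb Z^n$, face by face, following \cite{AS10}.

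\emph{Part (iii).} Consider the torus $T=\mathbb G_m^n$ with coordinates $t$ and the function $F(x,t)=\sum_j x_j t^{\mathbf w_j}$. Take the relative twisted de Rham complex of $\pi:T\times\mathbb A^N\to\mathbb A^N$ with connection $d+dF+\sum\gamma_i\,dt_i/t_i$. Its top cohomology $\pi_+\mathcal O e^{F}t^\gamma$ carries a natural map from $M_\gamma$: send $1\mapsto[t^\gamma e^F dt/t]$. The box and Euler relations hold for this class because $\partial_{x_j}$ acts as multiplication by $t^{\mathbf w_j}$, and because $\sum_j w_{ij}x_j\partial_j + \gamma_i$ is exact in the $t_i$-direction. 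Using the Fourier--Laplace transform description, $M_\gamma$ is the Fourier transform of the direct image of $\mathcal O t^\gamma$ under $t\mapsto(t^{\mathbf w_j})$. Nonresonance makes $t^\gamma$ have no integer monodromy obstructions along the boundary of the toric closure, so the natural map is an isomorphism (Gelfand--Kapranov--Zelevinsky \cite{GKZ}, Schulze--Walther). The remaining task is to check that $j_!$ and $j_*$ agree for this local system under nonresonance.
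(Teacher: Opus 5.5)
The paper gives no proof of this theorem. It is background quoted from \cite{GKZ}, \cite{Ado94} and \cite{AS10}, so the question is whether your reconstruction stands on its own. As written it does not.

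\textbf{Part (i).} Here $A$ is an arbitrary rank-$n$ matrix: not homogeneous, possibly not pointed, and $\Delta$ contains the origin. In this generality, the claim that the principal symbols of the $\Box_\lambda$ generate $I_A$ is false.
\begin{itemize}
\item When $|\lambda_+|\neq|\lambda_-|$, the symbol of $\Box_\lambda$ is a single monomial. So the symbol variety is $V(\operatorname{in}_{(1,\dots,1)}I_A)$, which is not contained in $V(I_A)$.
\item Concretely, for $A=(1\ 2)$ the symbol of $\partial_1^2-\partial_2$ is $\xi_1^2$. The points $(0,\xi_2)$ with $\xi_2\neq 0$ lie on no orbit of $V(I_A)=\{\xi_2=\xi_1^2\}$, and over them the Euler symbol only imposes $x_2=0$. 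Your stratification therefore does not bound the symbol variety.
\item Your orbit count is GKZ's argument for homogeneous $A$. For general $A$ you first need that $V(\operatorname{in}_{(1,\dots,1)}I_A)$ is covered by the sets $\{\xi_j=0 \text{ for } \mathbf w_j\notin\sigma\}\cap V(I_{A_\sigma})$, with $\sigma$ running over faces of $\Delta$ not containing $0$.
\item This covering holds because a face through $0$ would produce a binomial $\xi^u-\xi^v\in I_A$ with $|u|>|v|$ whose initial monomial does not vanish at $\xi$. You then run the dimension count on these pieces; this is where Adolphson's argument departs from GKZ's.
\end{itemize}

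\textbf{Part (ii).} The step you single out as ``the main obstacle'' is the whole content, and it is not supplied. The set-up also needs repair.
\begin{itemize}
\item The Matusevich--Miller--Walther machinery \cite{toricmodule} is developed for pointed $A$. The statement allows, e.g., $\delta=\mathbb R^n$, which is the case treated in \cite{padictorus}.
\item For non-homogeneous $A$, ``the degree of $\mathbb C[\partial]/I_A$'' must first be given a meaning. One way is the homogenization $\widetilde A$ used in the proof of \Cref{Hk=0}, whose normalized volume is $n!\operatorname{vol}(\Delta)/[\mathbb Z^n:M']$. You would then have to transfer the rank statement back from $\widetilde A$ to $A$.
\item The missing idea for the vanishing is to compare $\mathbb C[\mathbb NA]$ with its saturation $\mathbb C[\delta\cap\mathbb ZA]$, which is Cohen--Macaulay by Hochster \cite{Hochster72}. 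The obstruction is then carried by the finitely generated quotient $\mathbb C[\delta\cap\mathbb ZA]/\mathbb C[\mathbb NA]$.
\item The quasi-degrees of that quotient lie in finitely many translates $\mathbf h+\mathbb C A_\tau$, with $\mathbf h\in\delta\cap\mathbb ZA$ and $\tau$ a proper face. You must check that semi-nonresonance keeps $-\gamma$ off all of these.
\item That check is not a formality, and it is sign-sensitive. For $A=\begin{pmatrix}1&1&1&1\\0&1&3&4\end{pmatrix}$, with the operators $Z_{i,\gamma}$, the rank is $5$ rather than $4$ exactly at $\gamma=(-1,-2)$. There both facet values equal $-2$, and the cause is the hole $(1,2)\in(\delta\cap\mathbb Z^2)\setminus\mathbb NA$.
\end{itemize}

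\textbf{Part (iii).} Your sentence ``$M_\gamma$ is the Fourier transform of the direct image of $\mathcal O t^\gamma$'' is the conclusion, not an available description.
\begin{itemize}
\item Up to conventions, the inverse Fourier transform of $M_\gamma$ is the zeroth Euler--Koszul homology of $\mathbb C[\partial]/I_A=\mathbb C[\mathbb NA]$. The twisted torus cohomology corresponds to the same construction for $\mathbb C[\mathbb ZA]$.
\item Your map $1\mapsto[t^\gamma e^F\,dt/t]$ corresponds to the map induced by $\mathbb C[\mathbb NA]\hookrightarrow\mathbb C[\mathbb ZA]$. What must be shown is that the Euler--Koszul homology of the cokernel vanishes for nonresonant $\gamma$.
\item Schulze--Walther \cite{SW09} control this by localizing at the $\partial_j$. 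Section~\ref{Comparison} of the paper is the $p$-adic analogue, removing one facet inequality at a time with a contracting homotopy.
\item Checking $j_!=j_*$ only gives cleanness of the direct image. It does not by itself show that your map is injective or surjective.
\end{itemize}
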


We refer the reader to \cite[pg.284-286]{Ado94} for the definitions of the nonresonance and semi-nonresonance conditions. When $\gamma$ fails to satisfy these conditions, the rank formula in (ii) need not hold.
The rank jumps for $A$-hypergeometric systems have been studied systematically by Matusevich, Miller, and Walther \cite{toricmodule}.
Motivated by Adolphson–Sperber \cite{AS10}, we study a geometric realization problem for GKZ hypergeometric systems in the $p$-adic setting. Related comparison results in the algebraic setting have also been studied by Fu \cite{Fu26}.

\subsection{$p$-adic GKZ hypergeometric complex}

Let $p$ be a prime number, $\mathbb Q_p$ the field of
$p$-adic numbers, $\mathbb Z_p$ the $p$-adic integers.
A natural $p$-adic analog of the classical GKZ system is obtained by working in Berthelot's theory of arithmetic $D$-modules on the $p$-adic formal affine space $\s A^N$. Let $D^\dagger$ denote the ring of differential operators on $\mathscr A^N$ overconvergent along infinity.
For $\gamma=(\gamma_1,\ldots,\gamma_n)\in \mathbb Z_p^n$, one may define the naive $p$-adic GKZ $D^\dagger$-module by
\[\mathcal M_\gamma^\dagger = D^\dagger\bigg/\left( \sum_{i=1}^n D^\dagger Z_{i,\gamma} + \sum_{\lambda\in \Lambda} D^\dagger \Box_\lambda \right). \]

However, this naive object is not well behaved in arithmetic geometry.
In general, it does not seem to carry a Frobenius structure even when the parameters $\gamma_i\in\frac{1}{p-1}{\mathbb Z}$. 
Instead, Fu--Li--Wan--Zhang \cite{padictorus} introduced a different object, called the $p$-adic GKZ hypergeometric complex and denoted by $C^\bullet(L^\dagger)$.
This is a logarithmic twisted relative de Rham complex over a weakly complete algebra $L^\dagger$.
Moreover, via its natural connection in the $x$-variables, it defines an object in the derived category of Berthelot arithmetic $D$-modules on $\s A^N$.
Fu--Li--Wan--Zhang \cite{padictorus} proved, under the assumption that $\gamma=0$ or the cone generated by the polytope $\Delta$ equals $\b R^n$, that $C^\bullet(L^\dagger)$ is an overholonomic object in the derived category of arithmetic $D$-modules.
They also showed that, over the non-degenerate locus, its top cohomology defines an overconvergent $F$-isocrystal of rank $n!\cdot \operatorname{vol}(\Delta)$.
The purpose of the present paper is to remove these extra assumptions and to establish the corresponding statements in greater generality.

\subsection{The $p$-adic GKZ hypergeometric $\s{D}$-module}\label{defhyperDmodule}

Another approach is to define the $p$-adic GKZ hypergeometric
$D$-module using Grothendieck's six functors.
In the algebraic setting, this perspective may be compared with the work of Matusevich, Miller, and Walther \cite{toricmodule};
it is also related to the construction of Adolphson-Sperber \cite{AS10}, where the corresponding GKZ $D$-module is realized in terms of the relative de Rham cohomology of algebraic varieties.
In the $\ell$-adic setting, an analogous construction was studied by Fu \cite{Fu16}. In the following, we formulate the corresponding construction in the $p$-adic setting and prove a comparison theorem with the $p$-adic GKZ hypergeometric complex.

Let $q=p^a$, $\mathbb F_q$ the finite field with $q$ elements, $\pi \in \overline{\mathbb{Q}}_p$ such that $\pi^{p-1}=-p$.
Let $K$ be a finite extension of $\mathbb{Q}_p$ containing $\pi$ with residue field $\kappa=\b{F}_q$, $R$ the discrete valuation ring of $K$, $\varpi$ the uniformizer of $R$.

Let $\s T^n$ and $\s A^N$ be the formal torus and affine space over $\text{spf } R$. Let $\rho_1:\s{T}^n\times \s{A}^N\to\s{T}^n$ and $\rho_2:\s{T}^n\times \s{A}^N\to\s{A}^N$ be the projections, and let $F$ be the morphism \[F:\s{T}^n\times \s{A}^N\to \s{A}^1,\quad (t_1,\cdots,t_n,x_1,\cdots,x_N)\mapsto\sum_{j=1}^Nx_jt_1^{w_{1j}}\cdots t_n^{w_{nj}}.\]
For any $\gamma=(\gamma_1,\dots,\gamma_n)\in \b{Z}_p^n$, we define the \textit{$p$-adic GKZ hypergeometric $\mathcal{D}$-module} to be the object in the category $D^b(\c{D}_{\s{P}^N,\b{Q}}^{\dagger}(\infty_N))$ given by \[\m{Hyp}_{\gamma}:=\rho_{2,+}(\rho_1^\ast\c{K}_{\gamma}\otimes F^\ast\c{L}_{\pi})[-n],\]
where $\c K_\gamma $ is the Kummer isocrystal on $\s T^n$ associated to the parameter $\gamma$, $\c L_\pi$ is the Dwork isocrystal on $\s A^1$ associated to $\pi$, $\rho_1^\ast\c{K}_{\gamma}\,\otimes\, F^\ast\c{L}_{\pi}$ is an overconvergent isocrystal on $\b{T}^n_{\kappa}\times\b{A}^N_{\kappa}$.
It can be realized as a coherent $\c{D}_{\s{P}^n\times \s{P}^N,\b{Q}}^{\dagger}(^\dagger(\b{P}^n_{\kappa}\times\b{P}_{\kappa}^N-\b{T}^n_{\kappa}\times\b{A}_{\kappa}^N))$ module by \cite[Theorem 6.2.8]{Caro19}. We have the following main theorem.

\begin{thm}\label{main}
Suppose $\gamma=(\gamma_1,\dots,\gamma_n)$ is $p$-adically semi-nonresonant (defined in \Cref{nonresonance}). The natural morphism (constructed in \Cref{canon})
\[C^\bullet_{\gamma}(\c L^\dagger)\to \m{Hyp}_\gamma\]
is a quasi-isomorphism.
\end{thm}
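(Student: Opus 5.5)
We will prove \Cref{main} by computing both sides explicitly as complexes of modules over the global sections of $\c{D}^\dagger$ on $\s A^N$, and comparing them term by term through a filtration argument. Since $\rho_2$ is affine in the relevant directions, $\m{Hyp}_\gamma$ can be computed by a relative de Rham complex. Namely, $\rho_{2,+}$ applied to the overconvergent isocrystal $\rho_1^\ast\c K_\gamma\otimes F^\ast\c L_\pi$ is represented, up to the shift $[-n]$, by the relative de Rham complex
\[\Omega^\bullet_{\s T^n\times\s A^N/\s A^N}\otimes\big(\rho_1^\ast\c K_\gamma\otimes F^\ast\c L_\pi\big).\]
The differential of this complex is $\nabla_{t_i}=t_i\partial_{t_i}+\gamma_i+\pi t_i\partial_{t_i}F$ in the logarithmic basis $\frac{dt_i}{t_i}$. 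On global sections this becomes a Koszul-type complex over the ring $B^\dagger$ of functions on $\s T^n\times\s A^N$ that are overconvergent along the complement of $\b T^n_\kappa\times\b A^N_\kappa$ in $\b P^n_\kappa\times\b P^N_\kappa$.

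The $p$-adic GKZ complex $C^\bullet_\gamma(\c L^\dagger)$ is the same kind of twisted logarithmic Koszul complex, but over the smaller weakly complete algebra $L^\dagger$. This algebra is spanned by monomials $x^\mu t^w$ with $w$ in the cone over $\Delta$, subject to the usual weight/growth condition. The natural map of \Cref{canon} is induced by the inclusion $L^\dagger\hookrightarrow B^\dagger$, and it is compatible with the connections in the $x$-variables. It therefore suffices to show that
\[\m{Cone}\big(C^\bullet_\gamma(L^\dagger)\to \m{Kos}^\bullet(B^\dagger;\nabla_{t_1},\dots,\nabla_{t_n})\big)\]
is acyclic. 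Exactness can be checked after forgetting the $\c D^\dagger$-structure and working with overconvergent modules. Exactness of sheaves on the affine formal scheme $\s A^N$ then follows from exactness of global sections, by Theorem A type results for coherent $\c D^\dagger$-modules.

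To prove acyclicity, filter $B^\dagger$ by the support of $w\in\b Z^n$. Grade $B^\dagger$ by $\b Z^n$ via the torus exponent $w$ together with the Euler weights of $x$, so that $x_j$ has degree $\mathbf w_j$. Set $\sigma=\b R_{\ge0}\Delta$. The quotient $B^\dagger/L^\dagger$ is then topologically spanned by monomials whose total weight lies outside $\sigma$, or on faces of $\sigma$ in the appropriate sense. Filter it by the faces $\tau$ of $\sigma$, i.e.\ by the region of $\b Z^n\setminus\sigma$ that is visible from $\tau$.

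On each graded piece the differential $\nabla_{t_i}$ reduces, to leading order, to multiplication by $w_i+\gamma_i$ plus terms from $\pi t\partial_tF$ that move in the directions $\mathbf w_j$ of the face. The $p$-adic semi-nonresonance of \Cref{nonresonance} will be used to show that, for each face $\tau$ not containing the weight, some linear form $\ell$ vanishing on $\tau$ satisfies $\ell(w+\gamma)\ne0$ $p$-adically. Moreover, this quantity must be bounded below uniformly, and that uniform bound is what the $p$-adic condition should guarantee. Consequently the Euler operator $\sum_i\ell_i\nabla_{t_i}$ acts invertibly on the graded piece, and a Koszul complex in which one element acts invertibly is acyclic.

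We expect the main obstacle to be this last step in the overconvergent setting. The leading-term inverse must be upgraded to a genuine inverse of $\sum\ell_i\nabla_{t_i}$ on the completed, overconvergent quotient. Doing so requires estimates on the perturbation coming from $\pi x_jt^{\mathbf w_j}$, and it requires that the inverse preserve overconvergence radii. Our plan is to write the inverse as a Neumann series
\[\big(\ell(w+\gamma)\big)^{-1}\sum_k\big(-\pi\,\ell(\mathbf w_j)x_jt^{\mathbf w_j}/\ell(w+\gamma)\big)^k\]
and to use that $\ell(\mathbf w_j)\ne0$ only pushes the weight further from $\tau$. Convergence would then follow from Dwork-type estimates $|\pi^k/\prod(\ell(w+\gamma)+\cdots)|$, with a loss in radius controlled linearly in the weight; the same estimates also establish the convergence of the resulting series within the overconvergent algebra. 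Once each graded piece is acyclic, completeness of the filtration gives acyclicity of the cone, which proves the quasi-isomorphism.
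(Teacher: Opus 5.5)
Your overall strategy matches the paper's: both sides become twisted Koszul complexes of overconvergent series in $\mathbf t$, the complement of the cone is filtered, and each graded piece is killed by a null-homotopic Euler combination $\sum_i c_iE_{i,\gamma}$. The gap is the step you single out as the main obstacle, which fails as you propose to do it.

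On the graded piece for a facet form $H_b=\sum_i c_iz_i$ (support in $\{H_b\le -1\}$), write $\mathrm{pr}_b\circ\sum_i c_iE_{i,\gamma}=\Lambda+N$. Here $\Lambda\mathbf t^{\mathbf w}=(H_b(\mathbf w)+H_b(\gamma))\mathbf t^{\mathbf w}$ and $N=\mathrm{pr}_b\circ\pi\sum_{H_b(\mathbf w_j)>0}H_b(\mathbf w_j)x_j\mathbf t^{\mathbf w_j}$. Since $N$ raises $H_b$, it pushes \emph{toward} the cone, where $\mathrm{pr}_b$ kills everything. So the Neumann series terminates on each monomial. The real difficulty is a bound on $\sum_k(-\Lambda^{-1}N)^k\Lambda^{-1}$ whose loss in $|\mathbf w|$ has an arbitrarily small exponential rate.

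Estimating it path by path, as your $|\pi^k/\prod(\ell(w+\gamma)+\cdots)|$ does, cannot give this once the generators off the facet have different heights $H_b(\mathbf w_j)$. Take $n=1$, $A=(1\;2)$, $\gamma=0$, $p=5$, and consider the path from $t^{-5M}$ through $-5j,-5j+2,-5j+4$ for $j=M,\dots,1$. Its denominators contain every multiple of $5$ in $[-5M,-1]$, but it carries only $3M-1$ factors of $\pi$. So its term has absolute value about $5^{M/2}$, a fixed exponential rate that no overconvergence radius $\rho>1$ close to $1$ can absorb. Yet the full coefficient of $x_1^{M-1}x_2^{2M}t^{-1}$, summed over all orderings of the steps, has absolute value at most $5M$. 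The cancellation you need is invisible to termwise estimates. Relatedly, $|H_b(\mathbf w)+H_b(\gamma)|$ is not bounded below uniformly even for $\gamma=0$. What non-Liouvilleness gives, and what suffices, is $|1/(H_b(\mathbf w)+H_b(\gamma))|\le c_\eta\eta^{-|H_b(\mathbf w)|}$ for every $\eta<1$.

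The missing idea is the paper's exponential conjugation. Set $F_{b,+}=\sum_{H_b(\mathbf w_j)>0}x_j\mathbf t^{\mathbf w_j}$ and $\phi_b=\mathrm{pr}_b\circ\exp(\pi F_{b,+})$.
\begin{itemize}
\item $\phi_b$ is an automorphism of the overconvergent quotient $Q_b^\dagger$: it is finite on each monomial, its coefficients $\pi^{|\mathbf k|}/\mathbf k!$ have absolute value $\le 1$, and its inverse is $\mathrm{pr}_b\circ\exp(-\pi F_{b,+})$.
\item It intertwines $\mathrm{pr}_b\circ E_{i,\gamma}$ with $\mathrm{pr}_b\circ\tilde E_{i,\gamma}$, where $\tilde E_{i,\gamma}$ keeps only the terms $x_j\mathbf t^{\mathbf w_j}$ lying on the facet.
\item Then $\sum_i c_i\tilde E_{i,\gamma}=\sum_i c_it_i\partial_{t_i}+H_b(\gamma)$ is diagonal. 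Only a single factor $1/(H_b(\mathbf w)+H_b(\gamma))$ must be controlled, and your Neumann series is just the resummation $\phi_b^{-1}\Lambda^{-1}\phi_b$.
\end{itemize}

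Three further points need repair.
\begin{itemize}
\item The filtration must be facetwise, like the paper's nested $\delta_b=\{H_1,\dots,H_b\ge 0\}$. Each graded piece must be stable under multiplication by all $x_j\mathbf t^{\mathbf w_j}$ and governed by one facet form. Semi-nonresonance says nothing about other forms vanishing on a smaller face: the value at $\gamma$ of a form such as $H_1+H_2$ can be a positive integer or Liouville even when every $H_b(\gamma)$ is admissible.
\item You cannot pass from global sections to sheaves by Theorem A. $\mathrm{Hyp}_\gamma$ is not known to be coherent before the theorem is proved, and for Liouville parameters it is not. Run the argument instead on each affine $\mathscr U\subset\mathscr P^N$, with coefficients in $\Gamma(\mathrm{sp}^{-1}(\mathscr U)\cap U_\lambda,\mathcal O)$, as the paper does.
\item $B^\dagger/L^\dagger$ is cut out by the $\mathbf t$-exponent alone, not by a total weight involving $x$. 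Exponents lying on faces of $\sigma$ are already in $L^\dagger$.
\end{itemize}
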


A closely related algebraic problem was first studied by Schulze--Walther \cite{SW09}.
They introduced the non-strongly resonant condition on the parameter $\gamma$ and proved that, under this condition, the GKZ system can be represented as the direct image of a twisted $D$-module on the algebraic torus.
This perspective was further developed from different points of view by Adolphson--Sperber \cite{AS10} and by Fu \cite{Fu26}. 

The $p$-adic situation is more delicate than its algebraic counterpart.
In the one-dimensional case of generalized hypergeometric equations in the sense of Katz, Miyatani \cite{Miyatani22} proved an analogous comparison result under a $p$-adic non-Liouville condition on the parameters.
The purpose of the present paper is to extend this type of result to general GKZ hypergeometric systems.

\subsection{More results on $p$-adic GKZ hypergeometric $\c{D}$-modules and complexes}

We establish further cohomological and arithmetic properties of the $p$-adic GKZ hypergeometric complex by studying it directly, which extends the corresponding results of
Fu--Li--Wan--Zhang \cite{padictorus} to more general parameters $\gamma$. Then the corresponding results for $\operatorname{Hyp}_\gamma$ follow from the comparison theorem \ref{main} and its invariance under integral shifts of the parameter.

Let $\Delta$ be the convex hull of $\{0,\mathbf w_1, \ldots,\mathbf w_N\}$ in $\mathbb R^n$, $U_0$ the Zariski open subset of $\mathbb A^N_\kappa$ consisting of those points $\bar{\mathbf a}=(\bar a_1, \ldots,\bar a_N)$ so that 
$F(\bar {\mathbf a}, \mathbf t)=\sum_{j=1}^N \bar a_j \mathbf  t^{\mathbf w_j}$ is \emph{non-degenerate} in the sense that for any face $\tau$ of $\Delta$ not containing the origin, the system of equations 
$$\frac{\partial}{\partial t_1}F_\tau(\bar{\mathbf a},\mathbf t)=\cdots =\frac{\partial}{\partial t_n}F_\tau(\bar{\mathbf a},\mathbf t)=0$$ has no solution in $(\bar \kappa^\ast)^n$, 
where $F_\tau(\bar{\mathbf a},\mathbf t)=\sum_{\mathbf w_j\in \tau}\bar a_j \mathbf t^{\mathbf w_j}$. Let $\operatorname{vol}(\Delta)$ be the volume of $\Delta$ in $\b{R}^n$, where the volume is normalized so that $[0,1]^n$ has volume one. In this article, we prove the following theorem.
\begin{thm}\label{Dmodule}
For $\gamma=(\gamma_1,\ldots,\gamma_n)\in \b Z_p^n$, let $\Sigma_{\gamma}$ be the subgroup $\sum_{i=1}^n\b{Z}\gamma_i\subseteq\b{Z}_p$. We have 
   \begin{itemize}
\item[(i)] If all the facet values $H_b(\gamma)$ are $p$-adically non-Liouville (defined in \Cref{nonresonance}), then $\operatorname{Hyp}_\gamma$ is a bounded coherent complex and $H^k(\operatorname{Hyp}_\gamma)=0$ for $k\ne n$. If, moreover, $\Sigma_\gamma$ contains no $p$-adic Liouville numbers, then $H^n(\operatorname{Hyp}_\gamma)$ is overholonomic.

\item[(ii)] Assume $\gamma_i\in \frac{\b Z}{q-1}$. Then  $H^n(\m{Hyp}_\gamma)$ defines an overconvergent $F$-isocrystal of rank $n!\operatorname{vol}(\Delta)$ on $U_0$. More precisely, for any divisor $T$ containing $\mathbb P_\kappa^N-U_0$, $(^\dagger T)H^n(\m{Hyp}_\gamma)$ is a convergent isocrystal of rank $n!\operatorname{vol}(\Delta)$ on $\mathbb P_\kappa^N-T$ overconvergent along $T$.
\end{itemize} 
\end{thm}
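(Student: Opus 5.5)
The plan is to deduce both parts from properties of the $p$-adic GKZ hypergeometric complex $C^\bullet_\gamma(\mathcal L^\dagger)$, transported to $\operatorname{Hyp}_\gamma$ by \Cref{main}. First we reduce to the semi-nonresonant case. The Kummer isocrystal $\mathcal K_\gamma$ depends only on $\gamma$ modulo $\mathbb Z^n$, since $\mathcal K_{\gamma+m}\cong\mathcal K_\gamma$ via multiplication by $t^m$. Hence $\operatorname{Hyp}_{\gamma+m}\cong\operatorname{Hyp}_\gamma$ for every $m\in\mathbb Z^n$. We then choose $m$ with $\gamma+m$ $p$-adically semi-nonresonant, for instance by pushing $\gamma$ deep into the interior of the cone so that the facet values $H_b(\gamma+m)$ avoid the resonant set. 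Shifting by $m$ changes each facet value $H_b$ by an integer, and changes $\Sigma_\gamma$ only by adding integers. The hypotheses in (i) are therefore preserved, because non-Liouvilleness is invariant under integer translation. After this reduction, \Cref{main} gives $\operatorname{Hyp}_\gamma\simeq C^\bullet_\gamma(\mathcal L^\dagger)$, and the remaining work is on the complex.

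\textbf{Step 1 (concentration).} We filter $L^\dagger$ by the weight or degree filtration coming from the Newton polyhedron of $\Delta$. Passing to the associated graded reduces the twisted de Rham complex to a Koszul complex of the operators $t_i\partial_{t_i}+\gamma_i+\pi t_i\partial_{t_i}F$ on a graded ring. We then argue as in Adolphson–Sperber that this graded piece is a regular sequence, so the Koszul complex is acyclic outside degree $n$. To lift this from the graded to the overconvergent completion, we need norm estimates on the inverses of the operators $u\mapsto (H_b(\gamma)+k)u$ along each facet $b$, with $k\in\mathbb Z$. The non-Liouville hypothesis on each $H_b(\gamma)$ says exactly that $|H_b(\gamma)+k|_p^{-1}$ grows subexponentially in $k$. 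That preserves overconvergence, giving $H^k=0$ for $k\neq n$ and the coherence of $H^n$ as a $D^\dagger$-module via a finite presentation.

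\textbf{Step 2 (overholonomicity).} We would like stability under restriction and pullback to strata of $\mathbb P^N$, so we use the six-functor description rather than the complex. Overholonomicity is stable under $\rho_{2,+}$ and tensor products once the input is overholonomic. The input $\rho_1^*\mathcal K_\gamma\otimes F^*\mathcal L_\pi$ is an overconvergent isocrystal, but $\mathcal K_\gamma$ for non-torsion $\gamma$ has no Frobenius structure. Here the hypothesis that $\Sigma_\gamma$ has no Liouville numbers is used. Via Caro's criterion, or Kedlaya's semistable reduction with non-Liouville exponents, the log-extension of $\mathcal K_\gamma$ across the boundary divisors of $\mathbb P^n$ has exponents in $\Sigma_\gamma$. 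This should make $\rho_1^*\mathcal K_\gamma\otimes F^*\mathcal L_\pi$ overholonomic after resolving the indeterminacy of $F$ at infinity. I expect this to be the main obstacle: stability of overholonomicity without Frobenius requires controlling the exponents along every boundary stratum, including those produced by toric blow-ups. The first thing to try is to pick a toric resolution of $\mathbb P^n$ adapted to the fan of $\Delta$, on which all exponents lie in $\Sigma_\gamma$.

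\textbf{Step 3 (part (ii)).} When $(q-1)\gamma\in\mathbb Z^n$, $\mathcal K_\gamma$ carries a Frobenius structure, so $\operatorname{Hyp}_\gamma$ is an $F$-complex by functoriality of the six operations. On $U_0$ we restrict to $\mathbb P^N-T$. Nondegeneracy gives, via the Adolphson–Sperber/Dwork theory, that the relative cohomology of $C^\bullet_\gamma$ is a locally free module, of rank equal to the number of monomials in a basis of the graded Jacobian ring, namely $n!\operatorname{vol}(\Delta)$. Such a module with integrable connection and Frobenius structure is an overconvergent $F$-isocrystal. We get the statement for $({}^\dagger T)H^n(\operatorname{Hyp}_\gamma)$ by Caro's equivalence between overholonomic modules that are coherent over $\mathcal O(^\dagger T)$ and overconvergent isocrystals.
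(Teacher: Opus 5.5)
Your outer skeleton is the paper's proof. Since $\operatorname{Hyp}_\gamma\cong\operatorname{Hyp}_{\gamma-\mathbf v}$ for $\mathbf v\in\mathbb Z^n$, one shifts to a $p$-adically semi-nonresonant parameter, applies \Cref{main}, and reads everything off from \Cref{complex}; in the paper that is the entire argument. You have the shift in the wrong direction, though. Semi-nonresonance requires $H_b(\gamma)\notin\mathbb Z_{>0}$, because the inverse in the proof of \Cref{comp} divides by $H_b(\mathbf w)+H_b(\gamma)$ with $H_b(\mathbf w)\le -1$. So if some $H_b(\gamma)\in\mathbb Z$, pushing $\gamma$ deep into $\delta$ makes it resonant. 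You must take $\gamma-\mathbf v$ with $\mathbf v$ deep in the interior of $\delta$. The substantive problems are in how you try to re-prove the properties of $C^\bullet_\gamma(\mathcal L^\dagger)$ instead of citing \Cref{complex}.

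\textbf{Step 1.} The non-Liouville hypothesis plays no role in concentration. $H^k(C^\bullet_\gamma(\mathcal L^\dagger))=0$ for $k\neq n$ holds for every $\gamma\in\mathbb Z_p^n$ (\Cref{Hk=0}), and the estimate on $|H_b(\gamma)+k|^{-1}$ is used up entirely in \Cref{comp}. The lifting mechanism you describe does not work, for three reasons.
\begin{itemize}
\item In the associated graded Koszul complex the operators become multiplication by leading forms of $\pi t_i\partial_{t_i}F$; no division by $H_b(\gamma)+k$ occurs there.
\item The increasing weight filtration is not exhaustive on $L^\dagger$, whose elements are infinite series, so no filtration spectral sequence argument applies.
\item The Adolphson--Sperber regular-sequence criterion is fiberwise at nondegenerate $\mathbf a$, whereas you need a statement over all of $\mathbb A^N$.
\end{itemize}
The missing idea is to work algebraically and then base change. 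The paper proves $H_k(L;\{E_{i,\gamma}\})=0$ for $k>0$ on $L=S[\mathbf x]$, for all $\gamma$. It homogenizes to the Cohen--Macaulay normal semigroup ring $\widetilde S$, applies the Euler--Koszul vanishing of \cite{toricmodule}, and dehomogenizes. It then passes to $L^\dagger$ via $C^\bullet_\gamma(L^\dagger)\cong D^\dagger\otimes_D C^\bullet_\gamma(L)$ and flatness of $D^\dagger$ over $D$. Coherence comes from \cite[Prop.~1.8]{padictorus}, not from this step.

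\textbf{Step 2.} The claim ``overholonomicity is stable under tensor products'' is not available without Frobenius structures, and $\mathcal K_\gamma$ has none for general $\gamma$. This is exactly why the paper works in Caro's category $D^b_{q\text{-}\Sigma_0}$ of quasi-$\Sigma_0$-unipotent complexes, with $\Sigma_0=\overline\Sigma_\gamma$ \cite{CaroUnipotentmonodromy}. That category lies inside the overholonomic one and is stable under tensor products and the six operations. The paper applies it to the log de Rham model of $C^\bullet_\gamma$. There, $v_+$ of the Kummer log-isocrystal on $\mathscr X(\Sigma')\times\mathscr P^N$ has exponents in $\overline\Sigma_\gamma$, so it is overholonomic and quasi-unipotent by \cite{CaroTsuzuki12}. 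Then $\mathcal L_\pi$ is an $F$-isocrystal, and one pushes forward along the proper projection. Once you invoke this category, your direct six-functor route for $\operatorname{Hyp}_\gamma$ becomes a reasonable alternative.

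\textbf{Step 3.} Getting the Frobenius on $\operatorname{Hyp}_\gamma$ by functoriality from the $F$-structures of $\mathcal K_\gamma$ and $\mathcal L_\pi$ is legitimate, and simpler than the explicit construction in \Cref{Frobenius}. The rank and local freeness, however, are only asserted. You need the following:
\begin{itemize}
\item the identification of $i_{\mathbf a}^!$ with the fiber Dwork complex;
\item finiteness of that fiber complex, which comes from overholonomicity;
\item $\chi_{\mathbf a}=(-1)^{n+N}n!\operatorname{vol}(\Delta)$ for $\gamma\in-\delta$;
\item the arguments of \cite[5.13--5.14]{padictorus}, to obtain a convergent isocrystal on $U_0$.
\end{itemize}
The paper transfers $\chi_{\mathbf a}$ to general $\gamma$ via \Cref{shiftbycone}. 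For $\operatorname{Hyp}_\gamma$ you could instead shift into $-\delta$ directly.
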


\begin{thm}\label{complex}
For $\gamma=(\gamma_1,\ldots,\gamma_n)\in \b Z_p^n$, let $\Sigma_{\gamma}$ be the subgroup $\sum_{i=1}^n\b{Z}\gamma_i\subseteq\b{Z}_p$. We have 
   \begin{itemize}
  \item[(i)] The complex $C_{\gamma}^\bullet(\c L^\dagger)\in D_{\m{coh}}^b(\c{D}_{\s{P}^N,\b{Q}}^{\dagger}(\infty_N))$ is concentrated in degree $n$. If $\Sigma_{\gamma}$ does not have $p$-adic Liouville numbers, then it is overholonomic.
  \item[(ii)] If $\gamma_i\in \frac{\b Z}{q-1}$, then $H^n(C_{\gamma}^\bullet(\c L^\dagger))$ admits a Frobenius structure. Moreover, if $\gamma\in-\delta$, then the Frobenius structure can be defined by the Dwork operator, where $\delta=\mathbb R_{\geq0}\Delta$.
  \item[(iii)] Assume $\gamma_i\in \frac{\b Z}{q-1}$. Then $H^n(C_{\gamma}^\bullet(\c L^\dagger))$ defines an overconvergent $F$-isocrystal of rank $n!\operatorname{vol}(\Delta)$ on $U_0$. More precisely, for any divisor $T$ containing $\mathbb P_\kappa^N-U_0$, $(^\dagger T)H^n(C_{\gamma}^\bullet(\c L^\dagger))$ is a convergent isocrystal of rank $n!\operatorname{vol}(\Delta)$ on $\mathbb P_\kappa^N-T$ overconvergent along $T$.

   \end{itemize}
    
\end{thm}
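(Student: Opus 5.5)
We study $C^\bullet_\gamma(\c L^\dagger)$ directly, following the method of Fu--Li--Wan--Zhang \cite{padictorus}, but replacing their assumption ($\gamma=0$ or $\delta=\b R^n$) by a graded analysis that works for every $\gamma\in\b Z_p^n$. Recall that $L^\dagger$ is a weakly complete algebra of overconvergent series in $t^{\mathbf w}$, $\mathbf w\in\delta\cap\b Z^n$, with coefficients overconvergent in $x$. The complex $C^\bullet_\gamma(\c L^\dagger)$ is the Koszul complex of the commuting operators
\[
D_{i,\gamma}=t_i\partial_{t_i}+\gamma_i+\pi\, t_i\partial_{t_i}F,\qquad i=1,\dots,n .
\]

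\textbf{Part (i): concentration in degree $n$.} We first filter $L^\dagger$ by the weight function attached to the polytope $\Delta$, so that $\pi F$ has weight $1$. On the associated graded, the $D_{i,\gamma}$ become multiplication by $\pi\,t_i\partial_{t_i}F$ on the graded ring $\mathrm{gr}\,L$. The terms $t_i\partial_{t_i}+\gamma_i$ have strictly lower weight and drop out. This is the step where no hypothesis on $\gamma$ is needed.

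Over the generic point, and after base change to any $x$ with $F$ nondegenerate, Kouchnirenko's theorem makes $t_1\partial_{t_1}F,\dots,t_n\partial_{t_n}F$ a regular sequence on the toric ring $\kappa[\delta\cap\b Z^n]$. Over the whole $\s A^N$, with $x$ as variables, we use instead the universal version: $\mathrm{gr}\,L$ is Cohen--Macaulay by Hochster, and the elements $\sum_j w_{ij}x_jt^{\mathbf w_j}$ form a regular sequence, because $\mathrm{gr}\,L/(\dots)$ has the expected dimension, as in \cite{Ado94}.

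Hence the graded Koszul complex is acyclic except in top degree. A spectral sequence argument for filtered complexes of weakly complete modules, with the estimates of Dwork--Adolphson--Sperber controlling overconvergence, lifts this to $H^k(C^\bullet_\gamma(\c L^\dagger))=0$ for $k\neq n$.

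Coherence of $H^n$ as a $\c D^\dagger$-module comes next. We realize $H^n$ as a quotient of $D^\dagger$ by an ideal containing the operators $Z_{i,\gamma}$ and $\Box_\lambda$, via $\partial_{x_j}\mapsto \pi t^{\mathbf w_j}$. Surjectivity follows from the generation of $\mathrm{gr}\,L$ by the monomials $t^{\mathbf w_j}$ modulo the Koszul relations.

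For overholonomicity we would compare $H^n$ with $\m{Hyp}_{\gamma'}$ for a parameter $\gamma'=\gamma+\mathbf m$, $\mathbf m\in\b Z^n$, chosen semi-nonresonant. Such a shift exists once $\Sigma_\gamma$ contains no Liouville numbers, since then every facet value $H_b(\gamma+\mathbf m)$ is non-Liouville, and avoiding resonance only requires avoiding integers along finitely many facets. Multiplication by $t^{\mathbf m}$ identifies $C^\bullet_\gamma$ with $C^\bullet_{\gamma+\mathbf m}$ up to an isomorphism of $\c D^\dagger$-complexes, possibly after restricting to the appropriate submodule of $L^\dagger$; we must check that it is an isomorphism on $H^n$. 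Then \Cref{main} gives $C^\bullet_{\gamma'}\simeq\m{Hyp}_{\gamma'}$. The right-hand side is overholonomic, because it is built from overholonomic isocrystals by $\rho_{2,+}$, $\rho_1^\ast$, $F^\ast$ and $\otimes$, which preserve overholonomicity after Caro.

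\textbf{Part (ii): Frobenius structure.} For $\gamma\in-\delta$ we use the Dwork operator $\psi\circ \exp(\pi F(x,t)-\pi F(x^q,t^q))$ twisted by $t^{(q-1)\gamma}$. This is possible because $(q-1)\gamma\in\b Z^n$, and it is well defined on $L^\dagger$ precisely because $-\gamma\in\delta$. It commutes with the $D_{i,\gamma}$ up to the factor $q$, so it gives a Frobenius structure on $H^n$. For general $\gamma$ we transport this structure along the integral-shift isomorphism to some $\gamma+\mathbf m\in-\delta$.

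\textbf{Part (iii): isocrystal of rank $n!\operatorname{vol}(\Delta)$.} Over $\mathbb P^N_\kappa-T$, nondegeneracy gives a finite free basis of $H^n$ given by the monomials in a fundamental domain, as in Adolphson--Sperber, of rank $n!\operatorname{vol}(\Delta)$. The estimates show that these monomials generate overconvergently, so $(^\dagger T)H^n$ is $\c O^\dagger(T)$-coherent, hence a convergent isocrystal overconvergent along $T$ by Berthelot's criterion.

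\textbf{Main obstacle.} The hardest step is the lifting from the graded Koszul complex to the weakly complete complex uniformly in $x$, together with the integral-shift isomorphism for $\gamma\notin-\delta$, where the ring $L^\dagger$ does not obviously shift. We would attempt this using the $\gamma$-twisted module $t^{\gamma}L^\dagger$ and comparing the shifted complexes degree by degree on the associated graded.
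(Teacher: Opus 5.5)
\textbf{The main gap: integral shifts are not isomorphisms.} Your plan rests on an integral-shift isomorphism $C^\bullet_\gamma\simeq C^\bullet_{\gamma+\mathbf m}$. You use it twice: for overholonomicity in (i), by passing to a semi-nonresonant $\gamma'$ and applying Theorem~\ref{main}, and for the Frobenius structure in (ii) when $\gamma\notin-\delta$. This is not a technicality about $L^\dagger$ failing to shift; the isomorphism is false for resonant parameters.

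Take $n=N=1$ and $A=(1)$, so $\delta=\mathbb R_{\geq0}$ and $E_\gamma=t\partial_t+\gamma+\pi xt$. Since $\partial_x$ acts by $\partial_x+\pi t$, one checks that $H^1(C^\bullet_\gamma(L^\dagger))\cong D^\dagger/D^\dagger(x\partial_x+\gamma)$, generated by the class of $1$.
\begin{itemize}
\item For $\gamma=1$, which is allowed in (ii), this is $D^\dagger/D^\dagger\partial_x x$. It realizes $j_+\mathcal O_{\mathbb G_m}$ for $j\colon\mathbb G_m\hookrightarrow\mathbb A^1$ and has no $x$-torsion.
\item For every $\gamma'=-k\in\mathbb Z_{\leq0}=(-\delta)\cap\mathbb Z$, the identity $\partial_x^{k}(x\partial_x-k)=x\partial_x^{k+1}$ shows that the class of $\partial_x^{k+1}$ is a nonzero section killed by $x$. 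This module is $j_!\mathcal O_{\mathbb G_m}$.
\end{itemize}
So no integral shift into $-\delta$, and no shift to a semi-nonresonant parameter, preserves the object. Transporting the Dwork Frobenius from $\gamma+\mathbf m\in-\delta$ therefore puts a Frobenius structure on a different module.

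Proposition~\ref{shiftbyfacet} shows exactly where shifting breaks. The inclusion $L^\dagger(\mathbf d)\subset L^\dagger(\mathbf d+\mathbf 1_b)$ gives a quasi-isomorphism when $H_b(\gamma)\neq d_b+1$; at $H_b(\gamma)=d_b+1$ only equality of $\chi_{\mathbf a}$ survives.

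The paper never crosses such a value. It compares $\mathrm{Fr}^*C^\bullet_\gamma$ with $C^\bullet_{q\gamma}$ via $F_1=\exp(\pi F(\mathbf x^q,\mathbf t^q)-\pi F(\mathbf x,\mathbf t))\circ\Phi$, with quasi-inverse $G_1$ built from $\Psi$. It then uses the chain $C^\bullet_{q\gamma}(L^\dagger)\to C^\bullet_{q\gamma}(L^\dagger(\mathbf d^+))\xrightarrow{\mathbf t^{(q-1)\gamma}}C^\bullet_{\gamma}(L^\dagger(\mathbf d^-))\leftarrow C^\bullet_\gamma(L^\dagger)$. The outer arrows are quasi-isomorphisms because $d_b=(q-1)H_b(\gamma)$ has the same sign as $H_b(\gamma)$, so no resonant value is crossed. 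This particular shift by $(q-1)\gamma$ is the missing idea; your Dwork-operator construction only reaches $\gamma\in-\delta$.

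\textbf{Overholonomicity, part (iii), and what does work.} Even with a valid shift, your overholonomicity argument has a second gap: $\mathrm{Hyp}_{\gamma'}$ is not overholonomic by general stability results.
\begin{itemize}
\item For general $\gamma'\in\mathbb Z_p^n$ the Kummer isocrystal has no Frobenius structure, so overholonomicity of overconvergent $F$-isocrystals cannot be invoked.
\item $\rho_{2,+}$ is not a proper push-forward with $Z=\rho_2^{-1}(\infty_N)$. It includes the extension across $H_T\times\mathbb P^N$, which can destroy even coherence (see the Remark after Corollary~\ref{Hypcoherent}).
\end{itemize}
The non-Liouville hypothesis has to enter exactly here, and the paper makes it enter directly for $C^\bullet_\gamma(\mathcal L^\dagger)$. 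It writes the logarithmic relative de Rham complex as $\rho''_{2,+}$ of $u_{B,+}(\mathrm{sp}_*\rho_1'^*\widetilde{\mathcal K}_\gamma)\otimes\mathrm{sp}_*\widetilde{\mathcal L}_{\pi,\overline F}$. It shows that $v_+(\widetilde{\mathcal K}^{\log}_\gamma)$ is quasi-$\overline\Sigma_\gamma$-unipotent. It then uses Caro's category of quasi-$\Sigma_0$-unipotent complexes, which is stable under these operations and contained in $D^b_{\mathrm{ovhol}}$.

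Part (iii) inherits the problem through its $F$-structure. In addition, the monomial basis of $(^\dagger T)H^n$ with overconvergent estimates is only asserted. The paper instead computes $\chi_{\mathbf a}$ at nondegenerate points for $\gamma\in-\delta$. It reaches general $\gamma$ through Corollary~\ref{shiftbycone}, which gives invariance of Euler characteristics rather than of complexes.

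Your weight-filtration proof that $H_k(L;\{E_{i,\gamma}\})=0$ for $k>0$ is a valid alternative to the paper's Matusevich--Miller--Walther argument. The Cohen--Macaulay ring you need is the Newton-weight graded ring, not Hochster's semigroup ring, and the dimension count goes through orbit by orbit. To pass to $L^\dagger$, use $C^\bullet_\gamma(L^\dagger)\cong D^\dagger\otimes_D C^\bullet_\gamma(L)$ and flatness of $D^\dagger$ over $D$, rather than estimates.

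Finally, drop the cyclicity argument for coherence. It fails when $\sum_j\mathbb Z_{\geq0}\mathbf w_j\neq\delta\cap\mathbb Z^n$, and coherence is already known.
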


Both theorems no longer involve the semi-nonresonance condition on the parameter $\gamma$.
From the arithmetic point of view, the $p$-adic GKZ hypergeometric complex is closely related to finite-field exponential sums.
Let
\[ F(\bf{x},\bf{t}) = \sum_{j=1}^N x_j t_1^{w_{1j}}\cdots t_n^{w_{nj}}\]
be the universal Laurent polynomial.
For a point $\bf{a}=(a_1,\ldots,a_N)\in \mathbb A^N(\mathbb F_q)$, a non-trivial additive character $\psi:\mathbb F_q\to \overline{\mathbb Q}_p^\ast$, and multiplicative characters $\chi_1,\ldots,\chi_n:\mathbb F_q^\ast\to \overline{\mathbb Q}_p^\ast$,
the associated finite-field GKZ hypergeometric function is the twisted exponential sum
\[S(F(\bf{x},\bf{t})) = \sum_{t_1,\ldots,t_n\in \mathbb F_q^\ast} \chi_1(t_1)\cdots \chi_n(t_n)\, \psi\left( \sum_{j=1}^N a_j t_1^{w_{1j}}\cdots t_n^{w_{nj}}\right).\]
When $(q-1)\gamma\in\mathbb Z^n$, the class of $\gamma$ modulo $\mathbb Z^n$ determines the multiplicative characters $\chi_i$ through the Teichmüller character. Under the additional assumption $\gamma\in-\delta$, Fu–Li–Wan–Zhang \cite{padictorus} express these exponential sums as alternating traces of the normalized Dwork operator on the fiber complex.

This relates the GKZ hypergeometric complex to the Dwork cohomology of toric exponential sums.
In the work of Adolphson and Sperber, such exponential sums are studied by constructing a $p$-adic Banach space with basis indexed by the lattice points in the cone generated by the Newton polytope $\Delta$, and by considering the associated Dwork complex.
Our comparison theorem shows that this construction is compatible with the functorial $p$-adic GKZ $D$-module construction.
In particular, it shows that the resulting Dwork cohomology is independent of the choice of the cone under the $p$-adic semi-nonresonance condition.

The existence of Frobenius structures is one of the central arithmetic features of $p$-adic hypergeometric equations.
As emphasized by Kedlaya, hypergeometric differential equations are basic examples of Picard--Fuchs equations and rigid local systems, and their $p$-adic Frobenius structures encode the variation of zeta functions of the corresponding motives over finite fields \cite{Kedlaya22}.
This idea goes back to Dwork's construction of Frobenius actions on $p$-adic hypergeometric cohomology and is closely related to Katz's interpretation of hypergeometric sums as trace functions of sheaves \cite{Dwork90,Katz90}.
In the framework of arithmetic $D$-modules, related $p$-adic hypergeometric $D$-modules and $F$-isocrystal structures have also been studied by Miyatani under suitable non-Liouville conditions on the parameters \cite{Miyatani20,Miyatani22}.

From this point of view, a $p$-adic GKZ system should not only be a quotient by differential operators;
it should also be realized as a Frobenius-compatible cohomological object whose trace functions recover finite-field GKZ hypergeometric sums.
Fu--Li--Wan--Zhang constructed such a Frobenius structure on the $p$-adic GKZ hypergeometric complex under certain restrictions on the parameter and the Newton polytope \cite{padictorus}.
In this paper, we prove that the $p$-adic GKZ hypergeometric complex carries a Frobenius structure for all parameters
\[\gamma\in \big(\frac{\b Z}{q-1}\big)^n,\]
or equivalently for arbitrary multiplicative characters of$(\mathbb F_q^\ast)^n$.
Thus the GKZ hypergeometric complex gives a natural arithmetic $D$-module-theoretic realization of finite-field GKZ hypergeometric functions beyond the previously known cases.

The paper is organized as follows. In \Cref{basicsetting}, we recall the relevant operations on arithmetic $\mathcal D$-modules, define the hypergeometric object $\operatorname{Hyp}_\gamma$, and describe its realization using toric compactifications.
In \Cref{section2}, we construct the $p$-adic GKZ hypergeometric complex, give relative de Rham descriptions of the two objects, and define the canonical comparison morphism. 
In \Cref{Comparison}, we prove the comparison theorem by successively removing the inequalities defining the exponent cone, using exponential conjugation and a contracting homotopy. We also give an example illustrating the role of the non-Liouville condition.
Finally, in \Cref{moreon}, we prove concentration in degree $n$, establish overholonomicity under the stated hypotheses, construct Frobenius structures, and determine the rank on the non-degenerate locus. We then deduce the corresponding properties of $\operatorname{Hyp}_\gamma$.

\section{Arithmetic $\c{D}$-modules}\label{basicsetting}
In this section, we recall some basic definitions and properties of Berthelot's arithmetic $\mathcal D$-modules.
We then use them to construct the $p$-adic GKZ hypergeometric $\mathcal D$-module.

We fix $K$, $R$, and $\pi$ as in the introduction, and normalize $|p|=p^{-1}$. Throughout the paper, subscripts are used to indicate base change.
For an $R$-scheme $X_R$, we denote by $X_\kappa$ and $X_K$ the special fiber and the generic fiber of $X_R$, $\s X$ its $\varpi$-adic formal completion along the special fiber, $X_K^{\m{an}}$ its analytification.
When no confusion can arise, we use the same notation for a morphism and its base changes to different base fields.
All the $\c{D}$-modules considered in this paper are left $\c{D}$-modules.

\begin{defn}
    $(1)$ A \textbf{d-couple} is a pair $(\s{X}, T)$, where $\s{X}$ is a smooth formal scheme over $\operatorname{Spf}(R)$ and where $T$ is a divisor of the special fiber of $\s{X}$ (the empty set is also regarded as a divisor). If a $\kappa$-variety $U$ is the special fiber of $\s{X}-T$, we say that $(\s{X}, T)$ \textbf{realizes} $U$.

    $(2)$ A \textbf{morphism of d-couples} $f:\left(\s{X}^{\prime}, T^{\prime}\right) \rightarrow(\s{X}, T)$ is a morphism $\bar{f}: \s{X}^{\prime} \rightarrow \s{X}$ such that $\bar{f}\left(\s{X}^{\prime} \backslash T^{\prime}\right) \subseteq \s{X} \backslash T$ and that $\bar{f}^{-1}(T)$ is a divisor (or empty). We say that $f$ \textbf{realizes} the morphism $f_0: U^{\prime} \rightarrow U$ of $\kappa$-varieties if $\left(\s{X}^{\prime}, T^{\prime}\right)$ (resp. $(\s{X}, T)$) realizes $U^{\prime}$ (resp. $U$) and if $f$ induces $f_0$.
\end{defn}

\subsection{Definitions of $\c{O}_{\s{X},\b{Q}}(^\dagger T)$ and $\c{D}_{\s{X},\b{Q}}^{\dagger}(^\dagger T)$}
For each d-couple $(\s{X}, T)$,  we recall the definition of $\c{O}_{\s{X},\b{Q}}(^\dagger T)$ and $\c{D}_{\s{X},\b{Q}}^{\dagger}(^\dagger T)$ in \cite[4.2.4 and 4.2.5]{Berthelot1}. Let $\s{U} \subset \s{X}$ be an affine open subset and let $h \in \Gamma(\s{U}, \c{O}_{\s{X}})$ so that the divisor $T$ of $X_{\kappa}$ is given by $h \equiv 0 \bmod (\varpi)$. Let $U_i$ and $h_i \in \Gamma(U_i, \c{O}_{U_i})$ be the reductions of $\s{U}$ and $h$ modulo $\varpi^{i+1}$, respectively. For any $m \geq 0$, define
\begin{align*}
    \c{B}_{X_i}^{(m)}(T)|_{U_i} & =\c{O}_{U_i}[t]/(h_i^{p^{m+1}} t-p),
    \\ \widehat{\c{B}}_{\s{X}}^{(m)}(T)|_{\s{U}} & =\varprojlim_{i} \,\,\c{B}_{X_i}^{(m)}(T)|_{U_i}=\c{O}_{\s{U}}\{t\} /(h^{p^{m+1}} t-p).
\end{align*}
$\widehat{\c{B}}_{\s{X}}^{(m)}(T)$ is a $p$-adically complete $\c{O}_{\s{X}}$-algebra depending only on $\s{X}$ and $T$. Define the sheaf of functions on $\s{X}$ with overconvergent singularities along $T$ by
\[\c{O}_{\s{X}, \mathbb{Q}}(^{\dagger} T)=\varinjlim_m\,\,\widehat{\c{B}}_{\s{X}}^{(m)}(T) \otimes_{\mathbb{Z}} \mathbb{Q} .\]
Denote by $\c{D}_{X_i}^{(m)}$ the sheaf of differential operators of level $m$ on $X_i$. If $x_1, \ldots, x_N$ are local coordinates on $U_i$ and $\partial_j=\partial / \partial x_j$ for $1 \leq j \leq N$, we have
\[\c{D}_{X_i}^{(m)}(U_i)=\Big\{\sum_{\mathbf{k}} \mathbf{q}_{\mathbf{k}}^{(m)}!a_{\mathbf{k}} \partial^{[\mathbf{k}]}\,\Big|\, a_{\mathbf{k}} \in \Gamma\left(U_i, \c{O}_{X_i}\right), a_{\mathbf{k}} \neq 0 \text { for only finitely many } \mathbf{k}\Big\},\]
where $\partial^{[\mathbf{k}]}=\frac{1}{\mathbf{k}!} \partial_1^{k_1} \ldots \partial_N^{k_N}$, and $\mathbf{q}_{\mathbf{k}}^{(m)}$ is determined by the expression $\mathbf{k}=p^m \mathbf{q}_{\mathbf{k}}^{(m)}+ \mathbf{r}_{\mathbf{k}}^{(m)}$ with $0 \leq r_{\mathbf{k}, j}^{(m)}<p^m$ for all $j$. The sheaf of differential operators of level $m$ on $\s{X}$ is defined by
\[\widehat{\c{D}}_{\s{X}}^{(m)}=\varprojlim_i \c{D}_{X_i}^{(m)}, \quad \widehat{\c{D}}_{\s{X}, \mathbb{Q}}^{(m)}=\widehat{\c{D}}_{\s{X}}^{(m)} \otimes_{\mathbb{Z}} \mathbb{Q} .\]
We have
\[\widehat{\c{D}}_{\s{X}, \mathbb{Q}}^{(m)}(\s{U})=\Big\{\sum_{\mathbf{k}} \mathbf{q}_{\mathbf{k}}^{(m)}!a_{\mathbf{k}} \partial^{[\mathbf{k}]}\,\Big|\, a_{\mathbf{k}} \in \Gamma\left(\s{U}, \c{O}_{\s{X}, \mathbb{Q}}\right), a_{\mathbf{k}} \rightarrow 0 \text { as }|\mathbf{k}| \rightarrow \infty\Big\},\]
where $\c{O}_{\s{X}, \mathbb{Q}}=\c{O}_{\s{X}} \otimes_{\mathbb{Z}} \mathbb{Q}$. 
Set
\[\widehat{\mathcal D}_{\s X}^{(\bullet)}(T):= \bigl(\widehat{\mathcal D}_{\s X}^{(m)}(T)\bigr)_{m\geq 0},
  \qquad\widehat{\mathcal D}_{\s X}^{(m)}(T):= \widehat{\mathcal B}_{\s X}^{(m)}(T) \widehat\otimes_{\mathcal O_{\s X}}\widehat{\mathcal D}_{\s X}^{(m)} .\]
Let $$\c{D}_{\s{X},\b{Q}}^{\dagger}(^\dagger T):=\varinjlim_m\,\,\widehat{\c{D}}_{\s{X}, \mathbb{Q}}^{(m)}(T)$$
be the sheaf of differential operators on $\s X$ overconvergent along $T$.

\subsection{}
Let $f:(\s{X}',T')\to(\s{X},T)$ be a morphism of d-couples and let $f_i:X'_i\to X_i$ be the reduction of $f$ modulo $\varpi^{i+1}$. Then we recall the definitions of cohomological operations on the categories $D^b(\c{D}_{\s{X},\b{Q}}^{\dagger}(^\dagger T))$, which can be found in \cite[1.6]{Carosurhol}, \cite[2.1.3]{Carotensorproduct} and \cite[3.1]{Caro19}.

\subsection{Extraordinary pull-back functors}

Following \cite[1.10]{Carosurhol}, we define the extraordinary pull-back functor $f^!:D_{\m{coh }}^b(\c{D}_{\s{X}, \b{Q}}^{\dagger}(^{\dagger} T))\to D^b(\c{D}_{\s{X}^{\prime}, \b{Q}}^{\dagger}(^{\dagger} T'))$ by the following formula 
\[\c{E}\mapsto \c{D}_{\s{X}'\to\s{X}, \b{Q}}^{\dagger}(^{\dagger} T',T)\otimes^L_{f^{-1}\c{D}_{\s{X}, \b{Q}}^{\dagger}(^{\dagger} T)} f^{-1}\c{E}[d_{\s{X}'/\s{X}}],\]
where \[\c{D}_{\s{X}'\to\s{X}, \b{Q}}^{\dagger}(^{\dagger}T',  T):=\b{Q}\otimes_{\b{Z}}\varinjlim_m\widehat{\mathcal D}^{(m)}_{\s X'\to\s X}(T',T),\] $\widehat{\mathcal D}^{(m)}_{\s X'\to\s X}(T',T):=\varprojlim_i\,\c{B}_{X'_i}^{(m)}(T')\otimes_{f_i^{-1}(\c{O}_{X_{i}})}f_i^{-1}(\c{D}_{X_i}^{(m)})$
is the usual finite level transfer bimodule, the superscript $L$ means the left derived functor of tensor product, $d_{\s{X}'},d_{\s{X}}$ are the dimensions of $\s{X}',\s{X}$, 
$d_{\mathscr X'/\mathscr X}:=d_{\mathscr X'}-d_{\mathscr X}.$
If $\bar{f}$ is smooth, or if $\bar{f}$ induces an open immersion $\s{X}^{\prime} \backslash T^{\prime}\hookrightarrow \s{X} \backslash T$, then the essential image of $f^{!}$ lies in $D_{\m{coh }}^b(\c{D}_{\s{X}^{\prime}, \b{Q}}^{\dagger}(^{\dagger} T'))$.

If $\s{X}^{\prime}=\s{X}$ and if $\bar{f}$ is the identity morphism on $\s{X}$, the pull-back functor $f^!$ is also denoted by
\begin{eqnarray*}
(^\dagger T', T): 
D_{\text{coh}}^b(\mathcal D_{\mathscr X, \mathbb Q}^\dagger(^\dagger T))\to D_{\text{coh}}^b (\mathcal D_{\mathscr X, \mathbb Q}^\dagger(^\dagger T')),\quad
(^\dagger T', T)(\mathcal E):=\mathcal D_{\mathscr X, \mathbb Q}^\dagger(^\dagger T')\otimes^L_{\mathcal D_{\mathscr X, \mathbb Q}^\dagger(^\dagger T)}\mathcal E.
\end{eqnarray*}
Here we can omit the symbol $L$ since the morphism 
$\mathcal D_{\mathscr X, \mathbb Q}^\dagger(^\dagger T)\to \mathcal D_{\mathscr X, \mathbb Q}^\dagger(^\dagger T')$ is flat (cf. \cite[4.3.10-11]{Berthelot1}).

Let $f^{\prime}:\left(\s{X}^{\prime \prime}, T^{\prime \prime}\right) \rightarrow(\s{X}^{\prime}, T^{\prime})$ be another morphism of d-couples and let $\c{E}$ be an object of $D_{\m{coh }}^{b}(\c{D}_{\s{X}, \b{Q}}^{\dagger}(^{\dagger} T))$. Then, as long as $f^{!} \c{E}$ belongs to $D_{\m{coh }}^{b}(\c{D}_{\s{X}^{\prime}, \b{Q}}^{\dagger}(^{\dagger} T'))$, we have a natural isomorphism $f'^! f^!(\c{E})\cong \left(f \circ f^{\prime}\right)^!(\c{E})$. If we consider overholonomic complexes in the sense of \cite[3.1]{Carosurhol}, the functor $f^!$ preserves the property of being overholonomic by \cite[3.8]{Carosurhol}; hence, we have the functor \[f^{!}: D_{\m{ovhol}}^b(\c{D}_{\s{X}, \b{Q}}^{\dagger}(^{\dagger} T)) \rightarrow D^b_{\m{ovhol}}(\c{D}_{\s{X}^{\prime}, \b{Q}}^{\dagger}(^{\dagger} T')).\]

\subsection{Ordinary push-forward functors}
Similarly, following \cite[1.10]{Carosurhol}, we define the ordinary push-forward functor $f_+:D_{\m{coh }}^b(\c{D}_{\s{X}', \b{Q}}^{\dagger}(^{\dagger} T'))\to D^b(\c{D}_{\s{X}, \b{Q}}^{\dagger}(^{\dagger} T))$
by the following formula   
\[\c{E}'\mapsto Rf_*\big(\c{D}_{\s{X}\leftarrow\s{X}', \b{Q}}^{\dagger}(T,T')\otimes^L_{\c{D}_{\s{X}', \b{Q}}^{\dagger}(^{\dagger} T')} \c{E}'\big),\] where $Rf_*$ is the right derived functor of $f_*$, \[\c{D}_{\s{X}\leftarrow\s{X}', \b{Q}}^{\dagger}(T,T'):=\b{Q}\otimes_{\b{Z}}\varinjlim_m\,\widehat{\mathcal D}^{(m)}_{\s X\leftarrow\mathscr{X}'}(T,T'),\] 
and $\widehat{\mathcal D}^{(m)}_{\mathscr{X}\leftarrow\mathscr{X}'}(T,T')$
is the usual finite level right transfer bimodule.
More precisely, it is given by
\[\widehat{\mathcal D}^{(m)}_{\s X\leftarrow\mathscr{X}'}(T,T') := \varprojlim_i\,\c{B}_{X'_i}^{(m)}(T')\otimes_{\c{O}_{X'_i}}\omega_{X'_i/R_i}\otimes_{f_i^{-1}(\c{O}_{X_{i}})}f_i^{-1}(\c{D}_{X_i}^{(m)}\otimes_{\c{O}_{X_i}}\omega_{X_i/R_i}^{-1}),\]
where $R_i:=R/\varpi^{i+1}$,  $\omega_{X_i/R_i},\omega_{X'_i/R_i}$ are the canonical bundles of $X_i,X'_i$ over $R_i$, respectively. 
If $\bar{f}$ is proper and $T^{\prime}=\bar{f}^{-1}(T)$, then the essential image of $f_{+}$ lies in $D_{\m{coh }}^b(\c{D}_{\s{X}, \mathbb{Q}}^{\dagger}(^{\dagger} T))$.
Let $f^{\prime}:\left(\s{X}^{\prime \prime}, T^{\prime \prime}\right) \rightarrow\left(\s{X}^{\prime}, T^{\prime}\right)$ be another morphism of d-couples, and let $\c{E}^{\prime \prime}$ be an object of $D_{\m{coh }}^b(\c{D}_{\s{X}^{\prime \prime}, \b{Q}}^{\dagger}\left({ }^{\dagger} T^{\prime \prime}\right))$. Then, as long as $f_{+}^{\prime} (\c{E}^{\prime \prime})$ is an object of $D_{\m{coh }}^{b}(\c{D}_{\s{X}^{\prime}, \b{Q}}^{\dagger}(^{\dagger} T'))$, we have a natural isomorphism $f_{+} f_{+}^{\prime} (\c{E}^{\prime \prime}) \cong\left(f \circ f^{\prime}\right)_{+} (\c{E}^{\prime \prime})$.

If $\bar{f}$ is proper and we consider overholonomic complexes, the functor $f_+$ preserves the property of being overholonomic by \cite[3.9]{Carosurhol}; hence, we have the functor \[f_+: D_{\m{ovhol}}^b(\c{D}_{\s{X}', \b{Q}}^{\dagger}(^{\dagger} T')) \rightarrow D^b_{\m{ovhol}}(\c{D}_{\s{X}, \b{Q}}^{\dagger}(^{\dagger} T)).\]

If $\s{X}^{\prime}=\s{X}$ and if $\bar{f}$ is the identity morphism on $\s{X}$ (thus $f$ represents an open immersion), then $f_{+}$ is obtained by considering the complex of $\c{D}_{\s{X}, \b{Q}}^{\dagger}(^{\dagger} T')$-modules as a complex of $\c{D}_{\s{X}, \b{Q}}^{\dagger}(^{\dagger} T)$-modules via the inclusion $\c{D}_{\s{X}, \b{Q}}^{\dagger}(^{\dagger} T) \hookrightarrow \c{D}_{\s{X}, \b{Q}}^{\dagger}(^{\dagger} T')$.

\subsection{The functors $\m{sp}_+$}\label{sp}

Recall the construction in \cite[2.5]{Carospfunctor}. For any d-couple $(\s{X},T)$ with $\s{X}$ proper, we have a functor \[\m{sp}_+:\m{Isoc}^{\dagger}(U,X/K)\to \m{Coh}(\c{D}_{\s{X},\b{Q}}^{\dagger}(^\dagger T)),\] where $X$ is the special fiber of $\s{X}$, $U:=X-T$.
By \cite[Theorem 4.2.2]{Caro2011PleineFS}, $\m{sp}_+$ is fully faithful and induces an equivalence on its essential image.

$\m{sp}_+$ commutes with cohomological operators. 

Let $f:(\s{X}',T')\to (\s{X},T)$ be a morphism between d-couples. By \cite[Proposition 4.2.4]{Caro2011PleineFS}, we have the canonical isomorphism
\[\m{sp}_+f^*\simeq f^![-d_{\s{X}'}+d_{\s{X}}]\m{sp}_+\] on the category $\m{Isoc}^{\dagger}(U,X/K)$, where $d_{\s{X}'},d_{\s{X}}$ are the dimensions of $\s{X}',\s{X}$, respectively. 

Let $U':=X'-T'$, and let $\s{U},\s{U}'$ be the open formal subschemes of $\s{X},\s{X}'$ with special fibers $U,U'$, respectively. If we assume that $f:\s{U}'\to\s{U}$ is finite \'{e}tale, then there exists a functor $f_*:\m{Isoc}^{\dagger}(U',X'/K)\to \m{Isoc}^{\dagger}(U,X/K)$ (cf. \cite[Theorem 2.6.3]{Tsuzuki99}). Moreover, we have the canonical isomorphism \[\m{sp}_+f_*\simeq f_+\m{sp}_+\] by \cite[Lemma 7.2.1]{Caro06}.

For the twisted tensor functors, we have \[\m{sp}_+(-\otimes-)\simeq \m{sp}_+(-){\otimes}_{\c{O}_{\s{X}, \b{Q}}(^{\dagger} T)}^{\dagger,L}\m{sp}_+(-)\] by \cite[Proposition 3.1.8]{Carotensorproduct}.

\subsection{$p$-adic GKZ hypergeometric $\c D$-modules}\label{hyp}
Now we use the language of d-couples to construct $p$-adic GKZ hypergeometric $\c D$-modules $\m{Hyp}_\gamma$.
Let $\gamma=(\gamma_1,\ldots, \gamma_n)\in \b Z_p^n$. For each $i$,
let $\c K_{\gamma_i}$ be the Kummer isocrystal on $\b T_{\kappa}^1/K$ with parameter $\gamma_i$ \cite[2.2.15]{Berrigid}. Define
$$\c K_\gamma=\c K_{\gamma_1}\boxtimes\cdots\boxtimes\c K_{\gamma_n} .$$
It is an overconvergent isocrystal on $\b T_{\kappa}^n/K$. Let $H_T=\b P_{\kappa}^n-\b T_{\kappa}^n$ be the toric boundary. Then $\text{sp}_+\c K_\gamma$ is  a coherent $\c D_{\s P^n,\b Q}^\dagger(^\dagger H_T)$-module by section \ref{sp}.
Recall that $\rho_1:\b{T}_{\kappa}^n\times \b{A}_{\kappa}^N\to\b{T}_{\kappa}^n$ and $\rho_2:\b{T}_{\kappa}^n\times \b{A}_{\kappa}^N\to\b{A}_{\kappa}^N$ are the projections, and let $F$ be the morphism \[F:\b{T}_{\kappa}^n\times \b{A}_{\kappa}^N\to \b{A}_{\kappa}^1,\quad (t_1,\cdots,t_n,x_1,\cdots,x_N)\mapsto\sum_{j=1}^Nx_jt_1^{w_{1j}}\cdots t_n^{w_{nj}}.\]
Set
\[\infty_N:=\mathbb P^N_{\kappa}\setminus \mathbb A^N_{\kappa}\]
and
\[Z:=\bigl(H_T\times \mathbb P^N_{\kappa}\bigr)\cup    \bigl(\mathbb P^n_{\kappa}\times \infty_N\bigr) \subset  \mathbb P^n_{\kappa}\times \mathbb P^N_{\kappa}.\]
Then the d-couple $(\mathscr{P}^n\times \mathscr{P}^N,Z)$ realizes $\mathbb T^n_{\kappa}\times \mathbb A^N_{\kappa}$.
We use the same symbols $\rho_1,\rho_2$ for the corresponding morphisms of d-couples.
In particular, \[\rho_2:(\mathscr{P}^n\times \mathscr{P}^N,Z)\longrightarrow (\mathscr{P}^N,\infty_N)\]
is a morphism of d-couples realizing the second projection.
By \cite[2.3.6]{Berrigid}, the tensor product $\rho_1^\ast \c K_\gamma\otimes F^\ast \c L_\pi$ is an overconvergent isocrystal on $\b T^n_{\kappa}\times \b A_{\kappa}^N$, and the realization $\text{sp}_+(\rho_1^\ast \c K_\gamma\otimes F^\ast \c L_\pi)$ is a coherent $\c{D}_{\s P^n\times \s{P}^N,\b{Q}}^{\dagger}(^\dagger Z)$-module. 
We define the $p$-adic hypergeometric $\c D$-module with parameter $\gamma$ by
\[\m{Hyp}_{\gamma}:=\rho_{2,+}\bigl(\text{sp}_+(\rho_1^\ast \c K_\gamma\otimes F^\ast \c L_\pi)\bigr)[-n].\]

\subsection{Toric compactifications}
We shall also use another realization of $\mathbb T^n_\kappa$ by a toric compactification. 
Let $\Delta$ be the convex hull of $\{0, \mathbf{w}_1,\ldots,\mathbf{w}_N\}$ in $\b R^n$. We assume that $\dim\Delta=n$.
Let $\Sigma$ be the normal fan defined by the polytope $\Delta$. Set
$X_R(\Sigma)$ to be the proper toric $R$-scheme defined by $\Sigma$.

Choose a regular complete fan $\Sigma'$ that refines $\Sigma$. Then $X_R(\Sigma')$ is smooth and proper. 
Replacing $\Sigma'$ by a further regular refinement, we may assume that there exists a birational morphism \[v:X_R(\Sigma') \to \mathbb P^n_R.\]
Denote by $X_\kappa(\Sigma), X_\kappa(\Sigma')$ (resp. $\s X(\Sigma), \s X(\Sigma')$) the special fibers (resp. formal completions) of $X_R(\Sigma), X_R(\Sigma')$ over $\kappa$ (resp. $R$).
Let $$H_{\Sigma'}=X_\kappa(\Sigma')-\b T_\kappa^n, \ \ Z'=(H_{\Sigma'}\times \b P_\kappa^N)\cup(X_\kappa(\Sigma')\times \infty_N).$$
Then the d-couple $(\s X(\Sigma'),H_{\Sigma'})$ also realizes $\b T^n_\kappa$, and  $\bigl(\s X(\Sigma')\times \s P^N, Z'\bigr)$ realizes $\b T^n_{\kappa}\times \b A^N_\kappa$. 
The overconvergent isocrystal $\rho_1^\ast \c K_\gamma\otimes F^\ast \c L_\pi$ on $\b T^n_{\kappa}\times \b A^N_\kappa$ is also realized by a coherent $\c D^\dagger_{\s X(\Sigma')\times \s P^N,\b{Q}}(^\dagger Z')$-module $\text{sp}_+(\rho_1^\ast \c K_\gamma\otimes F^\ast \c L_\pi)$. Let 
\[\rho_1':\bigl(\s X(\Sigma')\times \s P^N, Z'\bigr)\to (\s X(\Sigma'),H_{\Sigma'}), \ \rho_2':\bigl(\s X(\Sigma')\times \s P^N, Z'\bigr)\to (\s P^N,\infty_N)\]
be the projections. Then we can also define
\[\m{Hyp}_{\gamma}':=\rho_{2,+}'\m{sp}_+(\rho_1^\ast\c{K}_{\gamma}\otimes F^\ast\c{L}_{\pi})[-n].\]
The canonical isomorphism between $\m{Hyp}_{\gamma}'$ and $\m{Hyp}_{\gamma}$ follows from the commuting diagram
\[\begin{tikzcd}
    \text{Isoc}^{\dagger}(\b T^n_{\kappa}\times \b A^N_{\kappa},X_{\kappa}(\Sigma')\times \b{P}_{\kappa}^N/K) \ar[r,"\m{sp}_+"]\ar[d,"\m{id}_*","\cong"']  & \m{Coh}(\c D^\dagger_{\s X(\Sigma')\times \s P^N,\b{Q}}(^\dagger Z'))\ar[d,"(v\times\m{id})_+"]\ar[r,"\rho'_{2,+}"] & D^b(\c D_{\s P^N,\b{Q}}^\dagger(\infty_N))
    \\ \text{Isoc}^{\dagger}(\b T^n_{\kappa}\times \b A^N_{\kappa},\b{P}^n_{\kappa}\times \b{P}_{\kappa}^N/K)\ar[r,"\m{sp}_+"'] & \m{Coh}(\c D^\dagger_{\s P^n\times \s P^N,\b{Q}}(^\dagger Z))\ar[ru,"\rho_{2,+}"'] & ,
\end{tikzcd}\]
where the left square commutes by \cite[Lemma 7.2.1]{Caro06}.

On the other hand, the lattice points $0,\mathbf w_1,\ldots,\mathbf w_N \in \Delta$ define, by \cite[Lemma 6.0.28]{coxtoric}, a morphism $X_R(\Sigma) \longrightarrow \mathbb P_R^N.$
Let
\[X_{\delta,R}:=\operatorname{Spec} R[\delta\cap \mathbb Z^n]. \]
The restriction of this morphism to the affine open subset $X_{\delta,R}\subset X_R(\Sigma)$ factors through the standard affine chart $\mathbb A_R^N=\{x_0\neq 0\}\subset \mathbb P_R^N.$
Moreover, we have the following Cartesian diagram:
\[\begin{tikzcd}
X'_{\delta,R}:=X_R(\Sigma')\times_{X_R(\Sigma)} X_{\delta,R} \arrow[r] \arrow[d] & X_{\delta,R}\arrow[r] \arrow[d] & \mathbb A_R^N \arrow[d]
\\ X_R(\Sigma') \arrow[r] & X_R(\Sigma) \arrow[r] & \mathbb P_R^N, \end{tikzcd} \]
where $X_{\delta,R}\to \b A^N_R$ is the morphism defined by the $R$-algebra homomorphism 
$$R[x_1, \ldots, x_N]\to R[\delta\cap \b Z^n],\quad x_j\mapsto \mathbf t^{\mathbf w_j}.$$

\section{$p$-adic GKZ hypergeometric complex}\label{section2}

\subsection{Construction of $C_\gamma^{\bullet}(\c L^\dagger)$}\label{C}
For any $\mathbf u=(u_1, \ldots, u_N)\in\mathbb Z_{\geq 0}^N$ and $\mathbf v=(v_1, \ldots, v_n)\in \mathbb Z^n$, 
write $$\mathbf x^{\mathbf u}=x_1^{u_1}\cdots x_N^{u_N},\quad \mathbf t^{\mathbf v}=t_1^{v_1}\cdots t_n^{v_n}, \quad \vert\mathbf u\vert =u_1+\cdots+ u_N, \quad \vert\mathbf v\vert =|v_1|+\cdots+|v_n|.$$ 
A weakly complete algebra $L^\dagger$ is defined by
\[L^{\dagger}:=\Big\{\sum_{\mathbf{u}\in\b{Z}^N_{\geq 0},\mathbf{v}\in \delta\cap \b Z^n}a_{\mathbf{u},\mathbf{v}}\mathbf{x}^{\mathbf{u}}\bf t^{\mathbf{v}}\,\Big|\,a_{\mathbf{u},\mathbf{v}}\in K,\,\exists(c>0,0<\eta<1),\|a_{\mathbf{u},\mathbf{v}}\|\leq c\eta^{|\mathbf{u}|+|\mathbf{v}|}\Big\}.\]
The $p$-adic GKZ hypergeometric complex is the logarithmic twisted de Rham complex $C_\gamma^{\bullet}(L^\dagger)$ defined as follows: For $k\ge 0$, we set

$$C^k( L^\dagger)=
\{\sum_{1\leq i_1<\cdots < i_k\leq n}f_{i_1\ldots i_k}(\mathbf x, \mathbf t)
\frac{\mathrm dt_{i_1}}{t_{i_1}}\wedge \cdots\wedge \frac{\mathrm dt_{i_k}}{t_{i_k}}:\;f_{i_1\ldots i_k}(\mathbf x,\mathbf t)\in  L^\dagger\}$$ 
with differential $d_\gamma: C^k( L^\dagger)\to C^{k+1}( L^\dagger)$ given by
\begin{eqnarray*}
d_{\gamma}(\omega)=\mathrm d_{\mathbf t}\omega + \sum_{i=1}^n \Big(\gamma_i+\sum_{j=1}^N \pi w_{ij}x_j\mathbf t^{\mathbf w_j}\Big)
\frac{\mathrm dt_i}{t_i}\wedge \omega.
\end{eqnarray*}
It is equipped with a connection $\nabla:C_\gamma^\bullet (L^\dagger)\to C_\gamma^\bullet (L^\dagger)\otimes_{K\langle\bf{x}\rangle}\Omega^1_{K\langle\bf{x}\rangle}$ defined by
$$\nabla(\omega)=d_{\bf{x}}\omega+\sum_{j=1}^N \pi\mathbf t^{\mathbf w_j}dx_j\wedge\omega.$$
By \cite[Proposition 1.8]{padictorus}, $C_\gamma^{\bullet}(L^\dagger)$ is a complex of coherent $\Gamma(\s P^N, \mathcal D^\dagger_{\mathscr{P}^N, \mathbb Q}(\infty_N))$-modules. Hence it defines an object in $D^b_{\m{coh}}(\c D^\dagger_{\s P^N,\b Q}(\infty_N)),$ which we denote by $C_\gamma^{\bullet}(\c L^\dagger)$.

Now we describe this object by a relative twisted logarithmic de Rham complex. To describe this construction more precisely, we first fix the following
morphisms and commutative diagram. All schemes are defined over $R$, and the
subscript $R$ is omitted from the notation. Let
\[F(\bf{x},\bf{t})=\sum_{j=1}^N x_j \mathbf{t}^{\mathbf w_j}
  \in R[\delta\cap \mathbb Z^n][x_1,\ldots,x_N].\]
It defines the usual morphism
  $F:\b{T}^n\times \mathbb A^N\longrightarrow \mathbb A^1 .$
The same function
also defines a morphism
$F_\delta:X_\delta\times \mathbb A^N\longrightarrow \mathbb A^1.$ 
We have the following commutative diagram:
\[
\begin{tikzcd}[column sep=large, row sep=large]
& \b T^n \arrow[dl, hook', "j_\delta"'] \arrow[d, hook, "j'_\delta"]
& \b T^n \times \mathbb A^N \arrow[d, hook, "j'_\delta \times \mathrm{id}"] \arrow[dr, "F"] \arrow[l] & 
\\ X_\delta \arrow[d, hook, "j"'] & X'_\delta \arrow[l, "\phi_\delta"] \arrow[d, hook, "j'"] & X'_\delta \times \mathbb A^N \arrow[d, hook, "j' \times j_N"] \arrow[r, "\overline F"] \arrow[l] & \mathbb A^1
\\ X(\Sigma) & X(\Sigma') \arrow[l, "\phi_\Sigma"] & X(\Sigma') \times \mathbb P^N \arrow[l, "\rho_1'"] &
\end{tikzcd}
\qquad\qquad
\begin{tikzcd}[row sep=large]
\mathbb A^N \arrow[d, hook, "j_N"] \\ \mathbb P^N
\end{tikzcd}
\]
where $\overline F=F_\delta\circ (\phi_\delta\times \m{id})$.
The open immersion $j'\times j_N:X'_{\delta, R}\times \b A^N_R\to X_R(\Sigma')\times \b P_R^N$ induces an open immersion of rigid analytic spaces $]X'_{\delta,\kappa}\times \b A_\kappa^N[\to X_K(\Sigma')^{\m{an}}\times \b P^{N,{\m{an}}}_K$, which we still denote by $j'\times j_N$. We have the following lemma:

\begin{lem}\label{Ldagger}\label{global}
    $$L^\dagger=\Gamma\Big(X_K(\Sigma')^{\m{an}}\times \b P^{N,{\m{an}}}_K, (j'\times j_N)^\dagger(\c O_{X_K(\Sigma')^{\m{an}}\times \b P^{N,{\m{an}}}_K})\Big).$$
\end{lem}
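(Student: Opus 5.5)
We compute the global sections of $j^\dagger\mathcal O$ by reducing to the affine toric chart and then describing overconvergent functions on a strict neighbourhood of a tube explicitly.

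The first step is to remove the non-affine part. The open set $X'_{\delta}\times\mathbb A^N$ is the preimage of the affine $X_\delta\times\mathbb A^N$ under the proper map $\phi_\Sigma\times\mathrm{id}$, and $\phi_\Sigma$ is an isomorphism over $\mathbb T^n$. Since $j^\dagger$ only depends on a strict neighbourhood of the tube $]X'_{\delta,\kappa}\times\mathbb A^N_\kappa[$, we may push forward along the proper morphism $X_K(\Sigma')^{\rm an}\to X_K(\Sigma)^{\rm an}$. This reduces the claim to two facts:
\begin{itemize}
\item[(a)] $(\phi_{\Sigma}\times\mathrm{id})_*\mathcal O=\mathcal O$ on $X(\Sigma)\times\mathbb P^N$, which holds because $X(\Sigma)$ is normal and $\phi_\Sigma$ is proper birational with connected fibres;
\item[(b)] the analogous statement for $X_\delta\times\mathbb A^N\subset X(\Sigma)\times\mathbb P^N$, using that $j^\dagger$ commutes with proper pushforward (Berthelot).
\end{itemize}
Alternatively, one can stay on $X(\Sigma')$ and note that $\Gamma(]X'_\delta\times\mathbb A^N[,\mathcal O)$ is computed on $X_\delta$ because the fibres of $X'_\delta\to X_\delta$ are proper and connected.

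The second step treats the affine situation. We embed $X_\delta\times\mathbb A^N$ into affine space via generators $\mathbf t^{\mathbf v_1},\dots,\mathbf t^{\mathbf v_r}$ of the monoid $\delta\cap\mathbb Z^n$ together with the coordinates $x_j$; its closure in projective space lies inside $X(\Sigma)\times\mathbb P^N$ up to compatible compactification. By Berthelot's description, for a smooth affine with closure the sections of $j^\dagger\mathcal O$ are the direct limit over $\lambda\to1^+$ of functions on the strict neighbourhoods $V_\lambda=\{|x_j|\le\lambda,\ |\mathbf t^{\mathbf v_i}|\le\lambda\}$. For $X_\delta$ normal (but possibly singular), functions on $V_\lambda$ are power series $\sum a_{\mathbf u,\mathbf v}\mathbf x^{\mathbf u}\mathbf t^{\mathbf v}$ with $\mathbf v\in\delta\cap\mathbb Z^n$ and $|a_{\mathbf u,\mathbf v}|\lambda^{|\mathbf u|+\|\mathbf v\|_\delta}$ bounded, where $\|\cdot\|_\delta$ is the gauge function of the monoid. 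To obtain this we use the $\mathbb T^n\times$ torus action and decompose into monomial eigencomponents via Laurent expansion on the torus, so every analytic function on the Reinhardt domain $V_\lambda$ is a convergent Laurent series supported on $\delta\cap\mathbb Z^n$. Taking the union over $\lambda>1$ and using that $\|\mathbf v\|_\delta$ and $|\mathbf v|$ are comparable on $\delta$, we recover exactly the growth condition $\|a_{\mathbf u,\mathbf v}\|\le c\eta^{|\mathbf u|+|\mathbf v|}$ with $\eta=\lambda^{-\epsilon}$, which is the definition of $L^\dagger$.

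We expect the main obstacle to be the first step: justifying rigorously that global sections of $(j'\times j_N)^\dagger\mathcal O$ on the non-affine $X_K(\Sigma')^{\rm an}\times\mathbb P^{N,\rm an}_K$ are just the overconvergent functions on the affine chart. This needs Berthelot's independence of the compactification for $j^\dagger$, or an explicit cofinal system of strict neighbourhoods compatible with the toric boundary, and care with the points over $\infty_N$ and over the boundary of $X(\Sigma')$ outside $X'_\delta$.
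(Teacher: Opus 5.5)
Your proposal is correct and follows the same route as the paper. You first push forward along $\phi_\Sigma\times\mathrm{id}$ to reduce to $X_K(\Sigma)^{\mathrm{an}}\times\mathbb P^{N,\mathrm{an}}_K$; the paper takes this from the proof of \cite[Lemma 5.3]{padictorus}, and you derive it from normality of $X(\Sigma)$ together with compatibility of $j^\dagger$ with proper push-forward. You then pass to the direct limit over the cofinal affinoid strict neighbourhoods $V_\lambda\times U_\lambda$.

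The only real difference is the final identification with $L^\dagger$. The paper writes the limit as $K\langle y,x\rangle^\dagger/I$ and quotes \cite[Lemma 5.2]{padictorus}, whereas you redo that step by torus-equivariant Laurent expansion and by comparing the monoid gauge with $|\mathbf v|$. Note that the cofinality of the $V_\lambda$ does not require $X_\delta$ to be smooth, contrary to your phrasing.
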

\begin{proof}
    By the proof of \cite[Lemma 5.3]{padictorus}, we have an isomorphism
    \[
   (\phi_{\Sigma,K}^{\m{an}}\times\m{id}_{\b{P}^{N,\m{an}}_K})_*\big((j'\times j_N)^\dagger(\c O_{X_K(\Sigma')^{\m{an}}\times \b P^{N,{\m{an}}}_K})\big)\cong (j\times j_N)^\dagger(\c O_{X_K(\Sigma)^{\m{an}}\times \b P^{N,{\m{an}}}_K}).
    \]
    Hence it remains to compute the global sections of the right hand side.
    Recall the closed immersion $X_\delta\to\b A_R^M$ defined in \cite[Section 5.1]{padictorus}. Let $I$ be its defining ideal. 
For $\lambda>1$, let \[ V_\lambda = X_{\delta,K}^{\m{an}}\cap D(0,\lambda^+)^M \subset X_{\delta,K}^{\m{an}}, \qquad U_\lambda=D(0,\lambda^+)^N\subset \b P_K^{N,\m{an}}. \] Then $\{V_\lambda\}_{\lambda>1}$ is a fundamental system of affinoid strict neighborhoods of $]X_{\delta,\kappa}[$ in $X_K(\Sigma)^{\m{an}}$, and $\{U_\lambda\}_{\lambda>1}$ is a fundamental system of affinoid strict neighborhoods of $]\b A_\kappa^N[$ in $\b P_K^{N,\m{an}}$. By the proof of \cite[Lemma 2.1.7]{Berrigid}, the products $V_\lambda\times U_\lambda$ form a fundamental system of affinoid strict neighborhoods of 
\[ ]X_{\delta,\kappa}\times \b A_\kappa^N[ \subset X_K(\Sigma)^{\m{an}}\times \b P_K^{N,\m{an}}. \]

Therefore,
\begin{eqnarray*}
&&\Gamma\left(
X_K(\Sigma)^{\mathrm{an}}\times \mathbb P_K^{N,\mathrm{an}}, (j\times j_N)^\dagger \mathcal O_{X_K(\Sigma)^{\mathrm{an}}\times \mathbb P_K^{N,\mathrm{an}}} \right) \\
&=&
\varinjlim_{\lambda\to 1^+} \Gamma\left( V_\lambda\times  U_\lambda, \mathcal O_{X_K(\Sigma)^{\mathrm{an}}\times \mathbb P_K^{N,\mathrm{an}}} \right) \\
&=&
\varinjlim_{\lambda\to 1^+} \Gamma\left( V_\lambda, \mathcal O_{X_K(\Sigma)^{\mathrm{an}}} \right) \widehat{\otimes}_K \Gamma\left( U_\lambda, \mathcal O_{\mathbb P_K^{N,\mathrm{an}}} \right) \\
&=&
\varinjlim_{\lambda\to 1^+} \left( K\{\lambda^{-1}y_1,\ldots,\lambda^{-1}y_M\} /I K\{\lambda^{-1}y_1,\ldots,\lambda^{-1}y_M\} \right) \widehat{\otimes}_K K\{\lambda^{-1}x_1,\ldots,\lambda^{-1}x_N\} \\
&=&
\varinjlim_{\lambda\to 1^+} K\{\lambda^{-1}y_1,\ldots,\lambda^{-1}y_M, \lambda^{-1}x_1,\ldots,\lambda^{-1}x_N\} /I K\{\lambda^{-1}y_1,\ldots,\lambda^{-1}y_M,  \lambda^{-1}x_1,\ldots,\lambda^{-1}x_N\}  \\
&=&
K\langle y_1,\ldots,y_M,x_1,\ldots,x_N\rangle^\dagger/I K\langle y_1,\ldots,y_M,x_1,\ldots,x_N\rangle^\dagger    \\
&\cong& L^\dagger .
\end{eqnarray*}
The last isomorphism is obtained by applying the proof of
\cite[Lemma 5.2]{padictorus} after adjoining the overconvergent variables
$x_1,\ldots,x_N$. This proves the lemma.
\end{proof}

Let $D'=X_{\delta,\kappa}'-\b T^n_\kappa$. As a resolution of singularities of $X_{\delta,\kappa}$, $X_{\delta,\kappa}'$ is smooth, and $D'$ is a normal crossing divisor.
Hence the sheaf $(j'\times j_N)^\dagger \Omega^k_{X_K(\Sigma')^{\m{an}}\times \b P^{N,{\m{an}}}_K/\b P^{N,{\m{an}}}_K}\big(\m{log} (D'\times \b P^N)\big)$ of relative $k$-forms with logarithmic poles along $D'$ is a free  $(j'\times j_N)^\dagger(\c O_{X_K(\Sigma')^{\m{an}}\times \b P^{N,{\m{an}}}_K})$-module with basis
$$\frac{\mathrm dt_{i_1}}{t_{i_1}}\wedge\cdots \wedge \frac{\mathrm dt_{i_k}}{t_{i_k}}\quad (1\leq i_1<\cdots<i_k\leq n)$$ for each $k$.

Let $\widetilde{\c{K}}_\gamma$ be $j'^\dagger\c O_{X_K(\Sigma')^{\m{an}}}$ endowed with the logarithmic connection
$$\nabla:j'^\dagger\c O_{X_K(\Sigma')^{\m{an}}}\to j'^\dagger \Omega^1_{X_K(\Sigma')^{\m{an}}}(\m{log} D')\quad \quad g\to dg+(\sum_{i=1}^n \frac{\gamma_idt_i}{t_i})g  .$$
Let $\widetilde{\c L}_{\pi,\overline F}$ (resp. $\widetilde{\c L}_{\pi,F}$) be $(j'\times j_N)^\dagger(\c O_{X_K(\Sigma')^{\m{an}}\times \b P^{N,{\m{an}}}_K})$ (resp. $(j'j_\delta'\times j_N)^\dagger(\c O_{X_K(\Sigma')^{\m{an}}\times \b P^{N,{\m{an}}}_K})$) with an integrable connection which realizes $\overline{F}^\ast \c L_\pi$ (resp. $F^\ast \c L_\pi$) . 
Then we have the following complex: 
\[\rho'_{2,\ast}\m{sp}_\ast\Big((j'\times j_N)^\dagger \Omega^\bullet_{X_K(\Sigma')^{\m{an}}\times \b P^{N,{\m{an}}}_K/\b P^{N,{\m{an}}}_K}\big(\m{log} (D'\times \b P^N)\big) \otimes \rho_1'^\ast \widetilde{\c{K}}_\gamma\otimes \widetilde{\c L}_{\pi,\overline F}\Big),\] 
where the tensor products are taken over $(j'\times j_N)^\dagger(\c O_{X_K(\Sigma')^{\m{an}}\times \b P^{N,{\m{an}}}_K})$, $\m{sp}:\b P^{N,{\m{an}}}_K\to \s P^N$ is the specialization morphism. 
The $\c{D}_{\s P^N,\b Q}^\dagger(\infty_N)$-action on the complex is defined as follows: on each term of the complex, $\c{D}_{\s P^N,\b Q}^\dagger(\infty_N)$ acts by the morphism \[\c{D}_{\s P^N,\b Q}^\dagger(\infty_N)\to \rho'_{2,\ast}\rho'^{-1}_2\c{D}_{\s P^N,\b Q}^\dagger(\infty_N)\to \rho'_{2,\ast}\c{D}_{\s X(\Sigma')\times\s P^N,\b Q}^\dagger(Z').\] One
checks that it commutes with the differential of the complex. We have the following proposition:
\begin{prop}\label{CL}
    There is a canonical isomorphism of $\c{D}_{\s P^N,\b Q}^\dagger(\infty_N)$-complexes:
    \begin{equation}
C_\gamma^{\bullet}(\c L^\dagger)\xrightarrow{\sim} \rho'_{2,\ast}\m{sp}_\ast\Big((j'\times j_N)^\dagger \Omega^\bullet_{X_K(\Sigma')^{\m{an}}\times \b P^{N,{\m{an}}}_K/\b P^{N,{\m{an}}}_K}\big(\m{log} (D'\times \b P^N)\big) \otimes \rho_1'^\ast \widetilde{\c{K}}_\gamma\otimes \widetilde{\c L}_{\pi,\overline F}\Big).
\end{equation}
\end{prop}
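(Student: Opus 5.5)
The proof compares the two complexes term by term, matches the differentials, and then checks that the $\c{D}_{\s P^N,\b Q}^\dagger(\infty_N)$-actions agree.

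\textbf{Terms.} The first step is to compute global sections of each term on the right. By \Cref{Ldagger}, the global sections of $(j'\times j_N)^\dagger\c O$ on $X_K(\Sigma')^{\m{an}}\times\b P_K^{N,\m{an}}$ are $L^\dagger$. The sheaf of relative log $k$-forms is free on the basis $\frac{\mathrm dt_{i_1}}{t_{i_1}}\wedge\cdots\wedge\frac{\mathrm dt_{i_k}}{t_{i_k}}$, and $\rho_1'^\ast\widetilde{\c K}_\gamma$ and $\widetilde{\c L}_{\pi,\overline F}$ are free of rank one over $(j'\times j_N)^\dagger\c O$. So the global sections of the $k$-th term are exactly $C^k(L^\dagger)$. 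This gives a map from the sections of $C^k_\gamma(\c L^\dagger)$ to global sections of $\rho'_{2,\ast}\m{sp}_\ast(\cdots)$.

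To get an isomorphism of sheaves on $\s P^N$, and not just of global sections, I would repeat the computation over a basis of affine opens $\s U\subset\s P^N$. One can take the standard opens $D(f)$ with $f$ a polynomial, or just note that both sides are coherent $\c D^\dagger(\infty_N)$-modules. The plan is:
\begin{itemize}
\item[(a)] the sections of the right side over $\s U$ are the overconvergent analogue of \Cref{Ldagger} with $\b P^N$ replaced by $\s U$, computed by the same strict-neighbourhood limit;
\item[(b)] this localization agrees with $\c D^\dagger_{\s P^N,\b Q}(\infty_N)\otimes_{\Gamma} C^k(L^\dagger)$, i.e. with the sheaf attached to the coherent module $C^k(L^\dagger)$ via Berthelot's theorem A for $\c D^\dagger(\infty_N)$ on the affine (away from infinity) situation.
\end{itemize}
I expect (a) to be a routine variant of \Cref{Ldagger}, and the real obstacle to be (b). To handle (b), I would argue that both sheaves are $\c O_{\s P^N,\b Q}(^\dagger\infty_N)$-modules whose sections over $D(f)$ are the overconvergent completion of the localization at $f$. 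Both are quasi-coherent in the sense of \cite[Proposition 1.8]{padictorus}, so a map of such sheaves that is an isomorphism on global sections is an isomorphism.

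\textbf{Differentials.} Next I would check that the differentials match, which is a direct computation. The connection on $\widetilde{\c K}_\gamma$ contributes $\sum_i\gamma_i\frac{\mathrm dt_i}{t_i}$. The connection on $\widetilde{\c L}_{\pi,\overline F}$ realizing $\overline F^\ast\c L_\pi$ is $g\mapsto dg+\pi\, d\overline F\, g$, using the standard realization $\c L_\pi=(\c O,d+\pi\,dx)$. Its relative part in the $t$-direction is $\pi\,d_{\bf t}F=\sum_i\sum_j\pi w_{ij}x_j\mathbf t^{\mathbf w_j}\frac{\mathrm dt_i}{t_i}$. The sum of these two terms is precisely $d_\gamma$.

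\textbf{$\c D^\dagger$-action.} Finally I would check compatibility with the $\c D^\dagger_{\s P^N,\b Q}(\infty_N)$-action. The action on the right comes from the $x$-part $d_{\bf x}+\pi\,d_{\bf x}\overline F=d_{\bf x}+\sum_j\pi\mathbf t^{\mathbf w_j}dx_j$ of the absolute connection. This is the connection $\nabla$ defining the module structure on $C^\bullet_\gamma(L^\dagger)$. The two actions agree on $x_j$ and $\partial_j$. Since the actions are continuous and these operators generate $\c D^\dagger$ topologically, they then agree for all of $\c D^\dagger$. Canonicity is clear, since no choices were made beyond the given realizations.
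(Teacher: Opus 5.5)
Your outline has the same structure as the paper's proof: identify global sections via \Cref{Ldagger}, then compute the right-hand side over an affine $\s U\subset\s P^N$ as the ring $L^\dagger(\s U)$ of series $\sum_{\mathbf w\in\delta\cap\mathbb Z^n}a_{\mathbf w}\mathbf t^{\mathbf w}$ with overconvergent coefficients on $\m{sp}^{-1}(\s U)$. Your checks of the differentials and of the $x$-connection are fine.

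The gap is step (b). You correctly identify it as the real issue, but you do not prove it. The principle ``a map that is an isomorphism on global sections is an isomorphism'' holds only inside a category on which $\Gamma(\s P^N,-)$ is an equivalence. Here that category is coherent $\c D^\dagger_{\s P^N,\b Q}(\infty_N)$-modules, by Noot-Huyghe's $\c D^\dagger(\infty)$-affinity \cite[5.3.3]{Ddaggeraffine}; Berthelot's theorem A is the wrong result, since $\s P^N$ is not affine. The sheaf $\rho'_{2,*}\m{sp}_*(\cdots)$ is not known to be coherent; that is essentially what the proposition establishes. Moreover, \cite[Proposition 1.8]{padictorus} only gives coherence of the global module $C^\bullet_\gamma(L^\dagger)$ over $D^\dagger$, and says nothing about this sheaf. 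Likewise, your claim that the sections of $\c D^\dagger_{\s P^N,\b Q}(\infty_N)\otimes_{D^\dagger}L^\dagger$ over $\s U$ are ``the overconvergent completion of the localization'' is exactly the identity $D^\dagger(\s U)\otimes_{D^\dagger}L^\dagger\cong L^\dagger(\s U)$ that has to be shown. Since $L^\dagger$ is not finitely generated over $K\langle\mathbf x\rangle^\dagger$, no $\c O$-module quasi-coherence argument gives it for free. As written, your argument only produces a morphism that is an isomorphism on global sections.

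What is missing is an explicit computation of $D^\dagger(\s U)\otimes_{D^\dagger}L^\dagger$; the paper does this with finite presentations.
\begin{enumerate}
\item The algebraic module $L$ has a finite presentation $D^{r_1}\to D^{r_0}\to L\to 0$ over $D=K[\mathbf x]\langle\partial\rangle$. By \cite[Section 3.7]{padictorus}, this induces a finite presentation of $L^\dagger$ over $D^\dagger$.
\item By \cite[Theorem 2.3.4(ii)]{Huyghe2004Trans}, $D^\dagger(\s U)$ consists of series $\sum f_{\mathbf v,\mathbf u}\mathbf x^{\mathbf u}(\partial/\pi)^{\mathbf v}$ with $f_{\mathbf v,\mathbf u}\in\c O_{\s P^N,\b Q}(\s U)$ and geometric decay. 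With this description, the same argument gives a compatible exact presentation $D^\dagger(\s U)^{r_1}\to D^\dagger(\s U)^{r_0}\to L^\dagger(\s U)\to 0$ with the same matrices.
\item Right exactness of $D^\dagger(\s U)\otimes_{D^\dagger}-$ then gives $L^\dagger(\s U)\cong D^\dagger(\s U)\otimes_{D^\dagger}L^\dagger=C^0_\gamma(\c L^\dagger)(\s U)$.
\item Every term of the right-hand complex is a finite direct sum of copies of $(j'\times j_N)^\dagger\c O$, so this handles all degrees.
\end{enumerate}
You need this step, or an independent proof that the right-hand side is $\c D^\dagger(\infty_N)$-coherent, to get a sheaf isomorphism.
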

\begin{proof}
By \Cref{Ldagger}, the global sections of the right hand side complex is equal to $C_{\gamma}^{\bullet}(L^{\dagger})$. Recall that $L^{\dagger}$ is a coherent $\Gamma(\s P^N, \mathcal D^\dagger_{\mathscr{P}^N, \mathbb Q}(\infty_N))$-module. The identification in global sections gives such a canonical morphism by \cite[Theorem 5.3.3]{Ddaggeraffine}. Each component of the right hand side complex is a finite direct sum of copies of $(j'\times j_N)^\dagger(\c O_{X_K(\Sigma')^{\m{an}}\times \b P^{N,{\m{an}}}_K})$. It is enough to compare the first term on an affine chart $\s U\subset \s P^N$. For $\lambda>1$, the strict neighborhoods $U_\lambda$, $V_\lambda$ are defined in the proof of Lemma \ref{global}.
\begin{eqnarray*}
&& (j'\times j_N)^\dagger
\big( \mathcal O_{X_K(\Sigma')^{\mathrm{an}}\times \mathbb P_K^{N,\mathrm{an}}}\big) \bigl(X_K(\Sigma')^{\mathrm{an}}\times \operatorname{sp}^{-1}(\s U)\bigr)                                                       \\[4pt]
&=&
(j\times j_N)^\dagger \left( \mathcal O_{X_K(\Sigma)^{\mathrm{an}}\times \mathbb P_K^{N,\mathrm{an}}}\right) \bigl(X_K(\Sigma)^{\mathrm{an}}\times \operatorname{sp}^{-1}(\s U)\bigr)
\\[4pt]
&\cong&
\varinjlim_{\lambda\to 1^+} \Gamma\bigl(\operatorname{sp}^{-1}(\s U)\cap U_\lambda, \mathcal O_{\mathbb P_K^{N,\mathrm{an}}}\bigr) \widehat\otimes_K \Gamma\bigl(V_\lambda, \mathcal O_{X_K(\Sigma)^{\mathrm{an}}}\bigr)  \\[4pt]
&=&
\varinjlim_{\lambda\to 1^+} \Gamma\bigl(\operatorname{sp}^{-1}(\s U)\cap U_\lambda, \mathcal O_{\mathbb P_K^{N,\mathrm{an}}}\bigr) \widehat\otimes_{K\langle \lambda^{-1}x_1,\ldots,\lambda^{-1}x_N\rangle} K\langle \lambda^{-1}x_1,\ldots,\lambda^{-1}x_N\rangle \widehat\otimes_K \Gamma\bigl(V_\lambda, \mathcal O_{X_K(\Sigma)^{\mathrm{an}}}\bigr)  \\[4pt]
&=&
\varinjlim_{\lambda\to 1^+} \Gamma\bigl(\operatorname{sp}^{-1}(\s U)\cap U_\lambda, \mathcal O_{\mathbb P_K^{N,\mathrm{an}}}\bigr) \widehat\otimes_{K\langle \lambda^{-1}x_1,\ldots,\lambda^{-1}x_N\rangle} K\{\lambda^{-1}y_1,\ldots,\lambda^{-1}y_M, \lambda^{-1}x_1,\ldots,\lambda^{-1}x_N\}/I \\
&=&\varinjlim_{\lambda\to 1^+}\Gamma\bigl(\operatorname{sp}^{-1}(\s U)\cap U_\lambda, \mathcal O_{\mathbb P_K^{N,\mathrm{an}}}\bigr)\{\lambda^{-1}y_1,\ldots,\lambda^{-1}y_M\}/I\\
&\cong&\big\{\sum_{\bf{w}\in \b Z^n\cap \delta} a_{\bf{w}} \bf{t^w} : a_{\bf{w}}\in \Gamma\bigl(\operatorname{sp}^{-1}(\s U)\cap U_\lambda,
\mathcal O_{\mathbb P_K^{N,\mathrm{an}}}\bigr) \text{ for some } \lambda>1, |a_{\bf{w}}|_\lambda s^{|\bf{w}|} \text{ is bounded for some } s>1\big\}.
\end{eqnarray*}

The last isomorphism is obtained by applying the proof of
\cite[Lemma 5.2]{padictorus} after adjoining the overconvergent coefficients
$\Gamma\bigl(\operatorname{sp}^{-1}(\s U)\cap U_\lambda,
\mathcal O_{\mathbb P_K^{N,\mathrm{an}}}\bigr).$
We denote the ring in the last line by $L^\dagger(\s U)$.
Recall some definitions of the ring of algebraic differential operators and the module in \cite[Section 5.9]{padictorus}.

\begin{eqnarray*}
D
&=&
\left\{ \sum_{\bf{v}\in\mathbb Z_{\geq 0}^{N}} f_\bf{v}(\bf{x})\partial^\bf{v} \;\middle|\; f_\bf{v}(\bf{x})\in K[\bf{x}],\  f_\bf{v}(\bf{x})=0 \text{ for all but finitely many }\bf{v} \right\},\\
L
&=&
\left\{ \sum_{\bf{w}\in\mathbb Z^n\cap\delta} a_\bf{w}(\bf{x})\bf{t}^\bf{w} \;\middle|\; a_\bf{w}(\bf{x})\in K[\bf{x}],\  a_\bf{w}(\bf{x})=0 \text{ for all but finitely many }\bf{w} \right\}.
\end{eqnarray*}
Here
\[ \partial^\bf{v}=\partial_1^{v_1}\cdots\partial_N^{v_N}, \qquad \bf{t}^\bf{w}=t_1^{w_1}\cdots t_n^{w_n}. \]

Set
\[ D^\dagger(\s U) := \Gamma\bigl( \s U,\mathcal D_{\s P^N,\mathbb Q}^{\dagger}(\infty_N) \bigr),
\qquad D^\dagger := \Gamma\bigl( \s P^N, \mathcal D_{\s P^N,\mathbb Q}^{\dagger}(\infty_N) \bigr). \]
By \cite[Section 3.7]{padictorus}, the algebraic presentation of the $D$-module $L$ induces a compatible finite presentation of $L^\dagger$ over $D^\dagger$.
By \cite[Theorem 2.3.4(ii)]{Huyghe2004Trans}, we have 
\[D^\dagger(\s{U})=\left\{ \sum_{\bf{v},\bf{u}\in\mathbb Z_{\geq 0}^{N}} f_{\bf{v},\bf{u}}\bf{x}^{\bf{u}}(\partial/\pi)^\bf{v}
\;\middle|\;
f_{\bf{v},\bf{u}}\in \c{O}_{\s{P}^N,\b{Q}}(\s{U}),\ \exists\, (C>0 ,0<\eta<1) \text{ s.t. }|f_{\bf{v},\bf{u}}|_{\s{U}}\leq C\eta^{|\bf{u}+\bf{v}|}\right\}, \]
where $|\cdot|_{\s{U}}$ is the spectral norm on $\c{O}_{\s{P}^N,\b{Q}}(\s{U})$, which is defined in \cite[0.1.4]{Berrigid}.
Therefore, the same argument induces a compatible finite presentation of $L^\dagger(\s U)$ over $D^\dagger(\s U)$. Thus we have the following commutative diagram:
\[ \xymatrix{ D^{r_1} \ar[r]\ar[d] & D^{r_0} \ar[r]\ar[d] & L \ar[r]\ar[d] & 0
\\ (D^\dagger)^{r_1} \ar[r]\ar[d] & (D^\dagger)^{r_0} \ar[r]\ar[d] & L^\dagger \ar[r]\ar[d] & 0
\\ D^\dagger(\s U)^{r_1} \ar[r] & D^\dagger(\s U)^{r_0} \ar[r] & L^\dagger(\s U) \ar[r] & 0 .} \]
Each horizontal sequence is exact. Hence
\[ L^\dagger(\s U) \simeq D^\dagger(\s U)\otimes_{D^\dagger}L^\dagger = C_\gamma^0(\c L^\dagger)(\s U), \]
by \cite[5.3.3]{Ddaggeraffine}. This proves the proposition. 
\end{proof}

\subsection{Canonical morphism from $C_\gamma^{\bullet}(\c L^\dagger)$ to $\m{Hyp}_\gamma$}\label{canon}
The following lemma realizes $\m{Hyp}_\gamma$ as a relative de Rham complex:

\begin{lem}\label{Hypisodagger}
    \[
    \m{Hyp}_\gamma\cong \rho'_{2,\ast}\m{sp}_\ast\Big((j' j_\delta'\times j_N)^\dagger\Omega^\bullet_{X_K(\Sigma')^{\m{an}}\times \b P^{N,{\m{an}}}_K/\b P^{N,{\m{an}}}_K}\otimes\rho_1'^\ast {\c{K}}_\gamma\otimes \widetilde{\c L}_{\pi,F}\Big),
    \]
    where the tensor product is taken over $(j' j_\delta'\times j_N)^\dagger\c O_{X_K(\Sigma')^{\m{an}}\times \b P_K^{N,\m{an}}}$.
\end{lem}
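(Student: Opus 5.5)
We will compute $\m{Hyp}_\gamma$ through the toric model $\m{Hyp}'_\gamma$, which is canonically isomorphic to it by the commutative diagram in the subsection on toric compactifications. So it suffices to identify
\[\rho'_{2,+}\,\m{sp}_+(\rho_1^\ast\c K_\gamma\otimes F^\ast\c L_\pi)[-n]\]
with the displayed relative de Rham complex. The point is that $\rho_2':\s X(\Sigma')\times\s P^N\to\s P^N$ is a smooth projection of relative dimension $n$ with proper source. For such a morphism, the direct image of a module of the form $\m{sp}_+(E)$, where $E$ is an overconvergent isocrystal, is computed by the relative overconvergent de Rham complex: $\rho'_{2,\ast}\m{sp}_\ast$ applied to $(j'j'_\delta\times j_N)^\dagger\Omega^\bullet_{/\b P^{N,\m{an}}_K}\otimes E$, shifted by the relative dimension. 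The shift $[-n]$ in the definition of $\m{Hyp}_\gamma$ is designed to cancel the shift $[n]$ that appears in this de Rham description of $\rho'_{2,+}$.

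The steps are as follows.

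First, we write $\rho'_{2,+}=R\rho'_{2,\ast}\bigl(\c D^\dagger_{\s P^N\leftarrow \s X(\Sigma')\times\s P^N,\b Q}(\infty_N,Z')\otimes^L(-)\bigr)$. For a projection we then resolve the transfer bimodule by the relative Spencer complex, i.e. $\Omega^{\bullet+n}_{\s X(\Sigma')\times\s P^N/\s P^N}\otimes\c D^\dagger(^\dagger Z')$, the standard resolution used by Berthelot and Caro. This gives
\[\rho'_{2,+}\c M\simeq R\rho'_{2,\ast}\bigl(\Omega^{\bullet}_{/\s P^N}\otimes\c M\bigr)[n]\]
for any coherent $\c D^\dagger(^\dagger Z')$-module $\c M$.

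Second, we take $\c M=\m{sp}_+(\rho_1^\ast\c K_\gamma\otimes F^\ast\c L_\pi)$. We use that $\m{sp}_+E=\m{sp}_\ast(j^\dagger E)$ with its induced $\c D^\dagger$-action, where here $j=j'j'_\delta\times j_N$ is the open immersion of the tube of $\b T^n_\kappa\times\b A^N_\kappa$. This identifies $\Omega^\bullet_{/\s P^N}\otimes\c M$ with $\m{sp}_\ast$ of the complex $(j'j'_\delta\times j_N)^\dagger\Omega^\bullet_{/\b P^{N,\m{an}}_K}\otimes\rho_1'^\ast\c K_\gamma\otimes\widetilde{\c L}_{\pi,F}$. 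Here $\widetilde{\c L}_{\pi,F}$ realizes $F^\ast\c L_\pi$, and the tensor product of isocrystals corresponds to $\otimes^\dagger$ by the compatibility recalled in \Cref{sp}.

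Third, we replace $R\rho'_{2,\ast}$ by $\rho'_{2,\ast}$. This needs the vanishing of higher cohomology of $\m{sp}_\ast$ of $j^\dagger$ of a locally free sheaf on $\rho_2'^{-1}(\s U)$, for each affine $\s U\subset\s P^N$. We would deduce this as in \Cref{global} and \Cref{CL}. The sheaf $\m{sp}_\ast$ of $(j'j'_\delta\times j_N)^\dagger\c O$ has no higher cohomology on $\rho_2'^{-1}(\s U)$ because it is pushed forward to the affine $X_\delta\times\s U$. On that affine the tube is a limit of affinoid strict neighborhoods, to which we apply Tate acyclicity, and $\phi_\Sigma$ contributes no higher direct images by \cite[Lemma 5.3]{padictorus}. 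The $\c D^\dagger_{\s P^N,\b Q}(\infty_N)$-actions agree because both are induced through $\rho_2'^{-1}\c D^\dagger\to\c D^\dagger(^\dagger Z')$.

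The main obstacle is the first step. Berthelot's $\rho_+$ is defined through the inverse limit over levels $m$ and the limit of $\c B^{(m)}$, so the Spencer-resolution identification with the relative overconvergent de Rham complex must be justified in the setting with overconvergent singularities along $Z'$. We intend to cite Caro's comparison of direct images with rigid cohomology for smooth projections of d-couples, which is compatible with $\m{sp}_+$, rather than redo it by hand. The other two steps are comparatively routine bookkeeping.
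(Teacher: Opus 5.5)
Your proposal follows the paper's proof: the relative Spencer resolution \cite[(4.2.1.1)]{BerthelotIntroduction} turns $\rho'_{2,+}\m{sp}_+(\rho_1^\ast\c K_\gamma\otimes F^\ast\c L_\pi)[-n]$ into $R\rho'_{2,\ast}\m{sp}_\ast$ of the relative overconvergent de Rham complex, and the higher direct images are then killed by Tate acyclicity on affinoid strict neighborhoods. The only difference is in the last step. The paper works directly on $X_K(\Sigma')^{\m{an}}\times \b P^{N,\m{an}}_K$ with the cofinal neighborhoods $T_\rho\times U_\lambda$ of the tube of $\b T^n_\kappa\times \b A^N_\kappa$, first checking $R\m{sp}_\ast=\m{sp}_\ast$ and then computing $H^k$ over $\rho_2'^{-1}(\s U)$ as a direct limit. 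Your detour through $\phi_\Sigma$, the affine $X_\delta$ and \cite[Lemma 5.3]{padictorus} is therefore unnecessary; that lemma is invoked here for $(j'\times j_N)^\dagger\c O$, not for the torus.
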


\begin{proof}
    By definition, 
    \[\m{Hyp}_{\gamma}=R\rho'_{2,*}\bigl(\c{D}_{\s{P}^N\leftarrow\s{X}(\Sigma')\times\s{P}^N,\b{Q}}^{\dagger}(\infty_N,Z')\otimes^{L}_{\c{D}_{\s{X}(\Sigma')\times\s{P}^N,\b{Q}}^{\dagger}(^{\dagger}Z')}
    \text{sp}_+(\rho_1'^\ast \c K_\gamma\otimes F^\ast \c L_\pi)\bigr)[-n].\]
    By \cite[(4.2.1.1)]{BerthelotIntroduction}, we have the following relative Spencer resolution \[\Omega_{\s{X}(\Sigma')\times\s{P}^N/\s{P}^N,\b{Q}}^{\bullet}(^{\dagger}Z')\otimes_{\c{O}_{\s{X}(\Sigma')\times\s{P}^N,\b{Q}}(^{\dagger}Z')}
    \c{D}_{\s{X}(\Sigma')\times\s{P}^N,\b{Q}}^{\dagger}(^{\dagger}Z')[n]\to \c{D}_{\s{P}^N\leftarrow\s{X}(\Sigma')\times\s{P}^N,\b{Q}}^{\dagger}(\infty_N,Z').\]
    Therefore, $\m{Hyp}_{\gamma}$ can be regarded as the complex
    \[R\rho'_{2,*}\m{sp}_\ast\Big((j' j_\delta'\times j_N)^\dagger\Omega^\bullet_{X_K(\Sigma')^{\m{an}}\times \b P^{N,{\m{an}}}_K/\b P^{N,{\m{an}}}_K}\otimes\rho_1'^\ast {\c{K}}_\gamma\otimes \widetilde{\c L}_{\pi,F}\Big).\]
    Each component in the bracket is a finite direct sum of $(j' j_\delta'\times j_N)^\dagger\c O_{X_K(\Sigma')^{\m{an}}\times \b P_K^{N,\m{an}}}$. It suffices to show $R\rho'_{2,*}\m{sp}_\ast\big((j' j_\delta'\times j_N)^\dagger\c O_{X_K(\Sigma')^{\m{an}}\times \b P_K^{N,\m{an}}}\big)\cong \rho'_{2,*}\m{sp}_\ast\big((j' j_\delta'\times j_N)^\dagger\c O_{X_K(\Sigma')^{\m{an}}\times \b P_K^{N,\m{an}}}\big)$.
    
    For an affine formal open subset $\s U\subset \s P^N$ with special fiber $U_\kappa$, $k>0$,  we have
    \begin{align*}
        & \,R^k\rho'_{2,*}\m{sp}_\ast\big((j' j_\delta'\times j_N)^\dagger\c O_{X_K(\Sigma')^{\m{an}}\times \b P_K^{N,\m{an}}}\big)(\s U)
        \\ = &\,  H^k(\rho'^{-1}_{2}(\s U),\m{sp}_\ast((j' j_\delta'\times j_N)^\dagger\c O_{X_K(\Sigma')^{\m{an}}\times \b P_K^{N,\m{an}}})|_{\rho'^{-1}_{2}(\s U)}).
    \end{align*}
For $\rho,\lambda>1$, let
$$T_\rho=\{\bf{t}\in \b T_K^{n,\text{an}} | \rho^{-1}<|t_i|<\rho, \text{ for } i=1,2,\ldots ,n\}\subset \b T_K^{n,\text{an}}\subset X_K(\Sigma')^{\m{an}}.$$
Then $\{T_\rho\times U_\lambda\}_{\rho,\lambda>1}$ forms a cofinal system of strict neighborhoods of $]T_\kappa^n\times \b A_\kappa^N[$ in $X_K(\Sigma')^{\m{an}}\times \b P_K^{N,\m{an}}$.
If $\mathscr V\subset \s X(\Sigma')\times \s P^N$ is an affine formal open subset with special fiber $\s V$, both $\text{sp}^{-1}(V_\kappa)$ and $\text{sp}^{-1}(V_\kappa)\cap (T_\rho\times U_\lambda)$ are affinoid. Hence, by Tate acyclicity theorem, for $k>0$,
$$H^k(\text{sp}^{-1}(V_\kappa),(j' j_\delta'\times j_N)^\dagger\c O_{X_K(\Sigma')^{\m{an}}\times \b P_K^{N,\m{an}}})=\varinjlim_{\rho,\lambda\to1^+}H^k(\text{sp}^{-1}(V_\kappa)\cap (T_\rho\times U_\lambda), O_{X_K(\Sigma')^{\m{an}}\times \b P_K^{N,\m{an}}})=0.$$ Consequently
$$
R\operatorname{sp}_*\big((j' j_\delta'\times j_N)^\dagger\c O_{X_K(\Sigma')^{\m{an}}\times \b P_K^{N,\m{an}}}\big)
\cong\operatorname{sp}_*\big((j' j_\delta'\times j_N)^\dagger\c O_{X_K(\Sigma')^{\m{an}}\times \b P_K^{N,\m{an}}}).
$$
We thus have, for $k>0$,
    \begin{align*}
        & \,H^k(\rho'^{-1}_{2}(\s U),\m{sp}_\ast(j' j_\delta'\times j_N)^\dagger\c O_{X_K(\Sigma')^{\m{an}}\times \b P_K^{N,\m{an}}})
        \\ = &\,  H^k\big(T_K^{n,\text{an}}\times \text{sp}^{-1}(\s U),(j' j_\delta'\times j_N)^\dagger\c O_{X_K(\Sigma')^{\m{an}}\times \b P_K^{N,\m{an}}}\big)
         \\ = &\, \varinjlim_{\rho,\lambda\to1^+}H^k(T_\rho\times(\text{sp}^{-1}(\s U)\cap U_\lambda), \c O_{X_K(\Sigma')^{\m{an}}\times \b P_K^{N,\m{an}}})=0.
    \end{align*}
Since $X_K(\Sigma')^{\mathrm{an}}\times\operatorname{sp}^{-1}(\s U)$ is quasi-compact and separated, and each $T_\rho\times(\text{sp}^{-1}(\s U)\cap U_\lambda)$ is affinoid. This proves our proposition.
\end{proof}

From the sequence of open immersions
$\b T^n\times \b A^N\xrightarrow{\,j'_\delta\times \m{id}\,} X'_\delta\times \b A^N \xrightarrow{\,j'\times j_N\,} X(\Sigma')\times \b P^N,$
we obtain a natural restriction morphism
\[ (j'\times j_N)^\dagger \bigl( \c O_{X_K(\Sigma')^{\m{an}}\times \b P_K^{N,\m{an}}} \bigr) \longrightarrow (j'j'_\delta\times j_N)^\dagger \bigl( \c O_{X_K(\Sigma')^{\m{an}}\times \b P_K^{N,\m{an}}} \bigr).\]
This morphism extends naturally to a morphism of the corresponding twisted de Rham complexes:
\begin{align*}
&\rho'_{2,*}\m{sp}_*
\Bigl( (j'\times j_N)^\dagger \Omega^\bullet_{X_K(\Sigma')^{\m{an}}\times \b P_K^{N,\m{an}}/\b P_K^{N,\m{an}}} \bigl(\log(D'\times \b P^N)\bigr) \otimes \rho_1'{}^\ast \widetilde{\c K}_\gamma \otimes \widetilde{\c L}_{\pi,\overline F} \Bigr)              
\\ & \longrightarrow \rho'_{2,*}\m{sp}_* \Bigl( (j'j'_\delta\times j_N)^\dagger \Omega^\bullet_{X_K(\Sigma')^{\m{an}}\times \b P_K^{N,\m{an}}/\b P_K^{N,\m{an}}} \otimes \rho_1'{}^\ast \c K_\gamma \otimes \widetilde{\c L}_{\pi,F} \Bigr).
\end{align*}
The compatibility with the differentials follows directly from the definitions.
Hence, using the preceding identifications, we obtain a natural morphism
\[ \theta:C_\gamma^\bullet(\c L^\dagger)\longrightarrow \m{Hyp}_\gamma . \]
In the next section, we will give conditions under which $\theta$ is a quasi isomorphism on every affine open $\s U$. We first introduce the following notation to describe the sections of
$C^\bullet_\gamma(\c L^\dagger)$ and $\m{Hyp}_\gamma$.

Recall $\delta\subseteq\b{R}^n$ is the cone generated by $\mathbf{w}_1, \ldots, \mathbf{w}_N$. Let $H_1,\cdots,H_s\in\b{Z}[t_1,\cdots,t_n]$ be primitive integral linear forms defining the codimension one faces of $\delta$, such that for any $\bf{v}\in\b{R}^n$, we have $\bf{v}\in\delta$ iff $H_b(\bf{v})\geqslant 0$ for any $b=1,\cdots,s$. Let $\delta_0=\b{R}^n$, for $b=1,\cdots,s$, let $\delta_b=\{\bf{v}\in\b{R}^n\,|\,H_1(\bf{v}),\cdots,H_b(\bf{v})\geqslant 0\}$. We have $\delta_s=\delta$.

Fix an affine open subset $\s U\subseteq \s P^N$. For $0\leq b\leq s,\lambda,\rho>1$, let
\[L_{b,\lambda,\rho}(\s U)=\big\{\sum_{\mathbf w\in\delta_b\cap \b Z^n}a_{\mathbf w}\bf{t}^{\mathbf w} \big| a_{\mathbf w}\in \Gamma\big(\operatorname{sp}^{-1}(\s U)\cap U_\lambda,\mathcal O_{\mathbb P_K^{N,\mathrm{an}}} \big),\ |a_{\mathbf w}|_{\lambda}\rho^{|\mathbf w|}\to 0  \text{ as } |\bf{w}|\to \infty \big\},\]
and let \[ L_b^\dagger(\s U)=\bigcup_{\lambda>1,\rho>1}L_{b,\lambda,\rho}(\s U).\]
where $|\cdot|_{\lambda}$ is the spectral norm on $\Gamma\big(\operatorname{sp}^{-1}(\s U)\cap U_\lambda,\mathcal O_{\mathbb P_K^{N,\mathrm{an}}} \big)$, which is defined in \cite[0.1.4]{Berrigid}.
Define $$E_{i,\gamma}=t_i\partial_{t_i}+\sum_{j=1}^N\pi w_{ij}x_j\mathbf{t}^{\mathbf{w}_j}+\gamma_i.$$
We will explain how $E_{i,\gamma}$ act on $L_b^\dagger(\s U)$. Recall that $x_1,\cdots, x_N$ are coordinates of $\s A^N\subset \s P^N$. They can be regarded as sections of $\c O_{\b P_K^{N,\text{an}}}$ over $U_\lambda$, and their restrictions to $\operatorname{sp}^{-1}(\s U)\cap U_\lambda$ give elements of $L_b^\dagger(\s U)$. Hence left multiplication by $x_j$ is well defined. Left multiplication by $\bf{t}^{\mathbf w_j}$ is well defined since $\mathbf w_j\in \delta_b$.
For $k=0,\cdots,n$, let
\[C^k\big(L_b^{\dagger}(\s U)\big)=\bigoplus_{1\leqslant i_1<\cdots<i_k\leqslant n}L_b^{\dagger}(\s U)\frac{dt_{i_1}}{t_{i_1}}\wedge\cdots\wedge\frac{dt_{i_k}}{t_{i_k}}.\]
 We can define the complex $C_{\gamma}^{\bullet}\big(L_b^{\dagger}(\s U)\big)$ via the differential maps \[d_k:C^k(L_b^{\dagger}(\s U))\to C^{k+1}(L_b^{\dagger}(\s U)),\quad\xi\frac{dt_{i_1}}{t_{i_1}}\wedge\cdots\wedge\frac{dt_{i_k}}{t_{i_k}}\mapsto\sum_{i=1}^nE_{i,\gamma}(\xi)\frac{dt_i}{t_i}\wedge\frac{dt_{i_1}}{t_{i_1}}\wedge\cdots\wedge\frac{dt_{i_k}}{t_{i_k}}.\]
Obviously, by \Cref{CL} and \Cref{Hypisodagger}, we have
\[C^\bullet_\gamma(\c L^\dagger)(\s U)=C_{\gamma}^{\bullet}\big(L_s^{\dagger}(\s U)\big)\qquad
\operatorname{Hyp}_{\gamma}(\s U)=C_{\gamma}^{\bullet}\big(L_0^{\dagger}(\s U)\big).\]
Thus, we obtain a sequence of inclusions of complexes
\[ C_{\gamma}^{\bullet}\bigl(L^\dagger(\s U)\bigr) =
C_{\gamma}^{\bullet}\bigl(L_s^\dagger(\s U)\bigr) \hookrightarrow
C_{\gamma}^{\bullet}\bigl(L_{s-1}^\dagger(\s U)\bigr) \hookrightarrow \cdots \hookrightarrow
C_{\gamma}^{\bullet}\bigl(L_1^\dagger(\s U)\bigr) \hookrightarrow
C_{\gamma}^{\bullet}\bigl(L_0^\dagger(\s U)\bigr). \]
In the next section, we give conditions under which the natural morphism
\[ C_{\gamma}^{\bullet}\bigl(L_b^\dagger(\s U)\bigr) \longrightarrow
C_{\gamma}^{\bullet}\bigl(L_{b-1}^\dagger(\s U)\bigr) \]
is a quasi-isomorphism for each $b=1,\ldots,s$. By applying these quasi-isomorphisms successively, we will prove Theorem \ref{main}.

\section{Proof of Comparison theorem}\label{Comparison}

\subsection{$p$-adic Liouville numbers}\label{Liouville}
We recall some basic facts about the $p$-adic Liouville numbers from \cite{defnofLiouville}.

We say that an element $\lambda\in K$ is Liouville if there exist infinitely many integers $m$ and a real number $0<\eta<1$, such that $|\lambda-m|<\eta^{|m|}$. Equivalently, if $\lambda\in K$ is non-Liouville, then for any $0<\eta<1$, there exists $c_{\eta}>0$ such that \[\left|\frac{1}{\lambda-m}\right|\leq c_{\eta}\eta^{-|m|}\] for any $m\neq \lambda$. The numbers in $\b{Q}^{\m{alg}}\cap K$ are non-Liouville, where $\b{Q}^{\m{alg}}$ is an algebraic closure of the rational number field $\b{Q}$.

By definition, for any $m\in\b{Z}$ and $\lambda\in K$, $\lambda$ is a $p$-adic Liouville number if and only if $\lambda+m$ is a $p$-adic Liouville number. We can construct a $p$-adic Liouville number as follows: set $r_1=1$ and define inductively $r_{l+1}=(l+1)p^{r_l}$ for $l\geq 1$. Let $\lambda:=\sum_{l=1}^{\infty}p^{r_l}$, and let $[\lambda]_m:=\sum_{l=1}^{m}p^{r_l}$. Then we have $|\lambda-[\lambda]_m|=p^{-r_{m+1}}<p^{-[\lambda]_m}$. Hence $\lambda$ is a $p$-adic Liouville number.

\subsection{$p$-adic semi-nonresonance condition}\label{nonresonance}

We say $\gamma=(\gamma_1,\cdots,\gamma_n)\in K^n$ is \textit{$p$-adically semi-nonresonant} if $H_b(\gamma)$ is non-Liouville and does not belong to $\b{Z}_{>0}$ for any $b=1,\cdots,s$.

\begin{prop}\label{comp}
    Fix $b$. If $H_b(\gamma)$ is non-Liouville and does not belong to $\b{Z}_{>0}$, then the canonical morphism $C_{\gamma}^{\bullet}\big(L_b^{\dagger}(\s U)\big)\to C_{\gamma}^{\bullet}\big(L_{b-1}^{\dagger}(\s U)\big)$ is a quasi-isomorphism.
\end{prop}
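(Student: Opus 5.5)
The plan is to show that the quotient complex $Q^\bullet := C_\gamma^\bullet(L_{b-1}^\dagger(\s U))/C_\gamma^\bullet(L_b^\dagger(\s U))$ is acyclic. Writing $H:=H_b$, the quotient term $L_{b-1}^\dagger(\s U)/L_b^\dagger(\s U)$ is identified with series $\sum a_{\mathbf w}\mathbf t^{\mathbf w}$ supported on $\{\mathbf w\in\delta_{b-1}\cap\b Z^n : H(\mathbf w)<0\}$, with the same overconvergence bounds. Since each $\mathbf w_j\in\delta$ satisfies $H(\mathbf w_j)\ge 0$, multiplication by $\pi x_j\mathbf t^{\mathbf w_j}$ does not decrease $H$. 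Hence the operators $E_{i,\gamma}$ preserve the filtration by $H$-degree, and $Q^\bullet$ carries a decreasing filtration indexed by $m=H(\mathbf w)\in\b Z_{<0}$. On the associated graded, $E_{i,\gamma}$ acts as $t_i\partial_{t_i}+\gamma_i+\sum_{H(\mathbf w_j)=0}\pi w_{ij}x_j\mathbf t^{\mathbf w_j}$, which is a Koszul-type complex.

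Next I use exponential conjugation to isolate the Euler part along the primitive direction. Choose $\mathbf e\in\b Z^n$ and coordinates so that $H(\mathbf v)=h\cdot\mathbf v$ with $h$ primitive, and form the combination $E_H:=\sum_i h_iE_{i,\gamma}$. This acts on $\mathbf t^{\mathbf w}$ by $H(\mathbf w)+H(\gamma)+\sum_j \pi H(\mathbf w_j)x_j\mathbf t^{\mathbf w_j}$. Its leading part on the $H$-degree $m$ piece is multiplication by $m+H(\gamma)$. This scalar is nonzero because $m<0$ and $-H(\gamma)\notin\b Z_{<0}$, i.e.\ $H(\gamma)\notin\b Z_{>0}$. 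The remaining part strictly raises the $H$-degree.

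I then solve $E_H g=f$ on the quotient term by a Neumann series, $g=\sum_{k\ge0}(-D^{-1}P)^kD^{-1}f$, where $D$ is the diagonal operator $m+H(\gamma)$ and $P=\sum_j\pi H(\mathbf w_j)x_j\mathbf t^{\mathbf w_j}$. Because the $H$-degree is bounded above by $-1$ in the quotient, only finitely many terms contribute in each degree. The operator $E_H$ commutes with all $E_{i,\gamma}$, so it acts on $Q^\bullet$ as a chain endomorphism and is invertible there. Moreover $E_H=d\iota+\iota d$, where $\iota$ is contraction with $\sum_i h_i\,t_i\partial_{t_i}$ (in the $\frac{dt_i}{t_i}$ basis). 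The contracting homotopy is then $\iota E_H^{-1}$, since $E_H^{-1}$ commutes with $d$ and $\iota$. This gives $d(\iota E_H^{-1})+(\iota E_H^{-1})d=\mathrm{id}$, so $Q^\bullet$ is acyclic, and the long exact sequence gives the quasi-isomorphism.

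The main obstacle is showing that $E_H^{-1}$ preserves overconvergence, i.e.\ maps $L_{b-1,\lambda,\rho}$ into some $L_{b-1,\lambda',\rho'}$ modulo $L_b$. Two estimates are needed. First, $|(m+H(\gamma))^{-1}|\le c_\eta\eta^{-|m|}$ by the non-Liouville hypothesis, applied with $\eta$ close to $1$; this loses only an arbitrarily small radius factor since $|m|\le C|\mathbf w|$. Second, the iterates of $P$ have norm controlled by $|\pi|^k$ times powers of $\lambda$ and $\rho$, because $|\pi|<1$ compensates the shrinkage. The iteration may move $\mathbf w$ far in directions parallel to the face while only slightly changing $H$. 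I would handle this by bounding the coefficient of $\mathbf t^{\mathbf w}$ in $g$ by a sum over paths, using $|\pi x_j|\le|\pi|\lambda$ and weighting by $\rho'^{|\mathbf w|}$ with $\rho'<\rho$ chosen so that $\rho'^{\max|\mathbf w_j|}\lambda|\pi|\cdot\max_m|m+H(\gamma)|^{-1}$-type factors stay summable. Handling the degree $m=H(\gamma)$ terms, which could lie in the face region, needs care; they are excluded because $m<0$ in the quotient. If the direct estimate fails, I would instead conjugate by $\exp(\pi F)$-type factors restricted to the face variables to absorb $P$ first, and then apply only the diagonal inverse.
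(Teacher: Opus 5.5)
Your algebraic skeleton is correct. $E_H=\sum_i h_iE_{i,\gamma}$ descends to the quotient, commutes with every $E_{i,\gamma}$, and satisfies $d\iota+\iota d=E_H$. The Neumann series is finite on each monomial. So $\iota E_H^{-1}$ would be a contracting homotopy \emph{provided} $E_H^{-1}$ maps $Q_b^\dagger=L_{b-1}^\dagger(\mathscr U)/L_b^\dagger(\mathscr U)$ into itself. But that is the entire analytic content of the proposition, and the only place non-Liouvilleness enters. The path-by-path estimate you propose cannot establish it.

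There is no uniform per-step factor, since $\sup_{m\le -1}|m+H_b(\gamma)|^{-1}=\infty$ for $H_b(\gamma)\in\mathbb Z_p$ (already for $H_b(\gamma)=0$, take $m=-p^r$). Bounding each of the up to $|m|+1$ reciprocals along a path by the non-Liouville estimate gives $\eta^{-\sum_\ell|m_\ell|}$, whose exponent is quadratic in $|m|$. Worse, single path terms really are exponentially large with a fixed base, so cancellation between paths is unavoidable.

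Take $n=1$, $\mathbf w_1=1$, $\mathbf w_2=3$, $p=2$ (so $|\pi|=1/2$), $\gamma=0$. The path from $t^{-4M}$ with alternating steps $+3,+1$ runs through $-4M,-4M+3,-4(M-1),\dots,-4$. It contributes to the coefficient of $x_1^{M-1}x_2^{M-1}t^{-4}$ in $E_H^{-1}(t^{-4M})$ a term of absolute value $2^{-(2M-2)}\cdot 2^{2M+v_2(M!)}=2^{M+2-s_2(M)}$, where $s_2$ is the binary digit sum. The true coefficient has absolute value at most $4M$, by the conjugation formula below. Ultrametrically you only get $|\sum|\le\max|\mathrm{term}|$. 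So your bound loses a factor of roughly $2^{|\mathbf w|/4}$, which no $\rho^{|\mathbf w|}$ with $\rho<2^{1/4}$ can absorb.

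The missing idea is the one that organizes this cancellation, and your fallback names it imprecisely, with the wrong exponential. One conjugates by the \emph{off-face} part $F_{b,+}=\sum_{H_b(\mathbf w_j)>0}x_j\mathbf t^{\mathbf w_j}$, truncated by $\mathrm{pr}_b$; the face part $F_b$ does not touch $P$ at all. The map $\phi_b=\mathrm{pr}_b\circ\exp(\pi F_{b,+})$ has three properties:
\begin{enumerate}
\item it is well defined and continuous on $Q_b^\dagger$, since each monomial $\mathbf t^{\mathbf w}$ yields finitely many terms with $|\mathbf k|\le C_1|\mathbf w|$ and $|\pi^{|\mathbf k|}/\mathbf k!|\le 1$;
\item it has inverse $\mathrm{pr}_b\circ\exp(-\pi F_{b,+})$;
\item it satisfies $\phi_b\circ E_H=D\circ\phi_b$ on $Q_b^\dagger$, with $D=\sum_ih_it_i\partial_{t_i}+H_b(\gamma)$.
\end{enumerate}
Hence $E_H^{-1}=\phi_b^{-1}D^{-1}\phi_b$. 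Each of its coefficients carries a \emph{single} factor $(m'+H_b(\gamma))^{-1}$, so non-Liouvilleness costs only $\rho'^{|H_b(\mathbf w)|}$ with $\rho'$ arbitrarily close to $1$.

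This is exactly the paper's Lemma~\ref{inverse} and Proposition~\ref{trans}. The paper conjugates the whole quotient complex to $\tilde C_\gamma^\bullet(Q_b^\dagger)$, where $\sum_ic_i\tilde E_{i,\gamma}$ is diagonal, and then applies the contraction $\iota_b$. Once you have this identity, your homotopy $\iota E_H^{-1}$ is the same argument. A minor point: the dangerous degree is $m=-H_b(\gamma)$, not $m=H_b(\gamma)$. It is excluded because $H_b(\gamma)\notin\mathbb Z_{>0}$ and $m\le -1$.
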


\subsection{Facetwise exponential conjugation}

Fix $1\leq b\leq s$. Let \[Q_b^\dagger:=L_{b-1}^{\dagger}(\s U)/L_b^{\dagger}(\s U)\cong\big\{\sum_{\bf{w}\in \delta_{b-1}\cap\b{Z}^n}a_{\bf{w}}\mathbf{t}^{\bf{w}}\in L_{b-1}^{\dagger}(\s U)\,|\,H_b(\bf{w})\leq -1\big\}.\]
We separate $F(\bf{x},\bf{t})=\sum_{j=1}^N x_j \bf{t}^{\bf{w}_j}=F_b(\bf{x},\bf{t})+F_{b,+}(\bf{x},\bf{t}),$ where
\[ F_b(\bf{x},\bf{t})=\sum_{H_b(\bf{w}_j)=0}x_j \bf{t}^{\bf{w}_j},\quad F_{b,+}(\bf{x},\bf{t})=\sum_{H_b(\bf{w}_j)>0}x_j \bf{t}^{\bf{w}_j}.\]
Let $\m{pr}_b: A[[\bf{t}^{\bf{w}}: \bf{w}\in \delta_{b-1}\cap \b Z^n]]\to A[[\bf{t}^{\bf{w}}: \bf{w}\in (\delta_{b-1}\setminus\delta_{b})\cap\b Z^n]] $ be the projection:
\[ \m{pr}_b(\sum_{\bf{w}\in \delta_{b-1}\cap \b Z^n} a_{\bf{w}}\bf{t}^{\bf{w}})=\sum_{\bf{w}\in (\delta_{b-1}\setminus\delta_{b})\cap\b Z^n} a_{\bf{w}}\bf{t}^{\bf{w}},\]
where $A$ is a ring. We have
\begin{lem}\label{inverse}
The operator $\phi_b:=\m{pr}_b\circ \exp (\pi F_{b,+}(\bf{x},\bf{t}))$ is well defined on $Q_b^\dagger$, and it has an inverse map $\m{pr}_b\circ \exp (-\pi F_{b,+}(\bf{x},\bf{t}))$.
\end{lem}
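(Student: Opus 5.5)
The plan is to prove two things in order: first, that $\exp(\pi F_{b,+})$ maps $L_{b-1}^\dagger(\s U)$ into a suitable completion, and that $\m{pr}_b$ of the product keeps the overconvergence estimates, so $\phi_b$ lands in $Q_b^\dagger$; second, the inverse property, which comes from a grading argument using that $H_b$ is additive.

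\emph{Well-definedness.} Write $\xi=\sum_{\mathbf w\in\delta_{b-1}\cap\b Z^n,\,H_b(\mathbf w)\le -1}a_{\mathbf w}\mathbf t^{\mathbf w}$ with $|a_{\mathbf w}|_\lambda\rho^{|\mathbf w|}\to0$. Expand
\[\exp(\pi F_{b,+})=\sum_{\mathbf m\in\b Z_{\ge0}^{J_+}}\frac{\pi^{|\mathbf m|}}{\mathbf m!}\prod_{j\in J_+}x_j^{m_j}\mathbf t^{\sum m_j\mathbf w_j},\]
where $J_+=\{j: H_b(\mathbf w_j)>0\}$. Each $x_j$ has $|x_j|_\lambda\le\lambda$, and $|\pi^m/m!|\le p^{-m/(p-1)}\cdot p^{(m-s_p(m))/(p-1)}\le 1$.

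The key point is the projection. Since $H_b(\mathbf w_j)\ge1$ for $j\in J_+$, a product term $\mathbf t^{\mathbf w+\sum m_j\mathbf w_j}$ survives $\m{pr}_b$ only if $H_b(\mathbf w)+|\mathbf m|\le H_b(\mathbf w)+\sum m_jH_b(\mathbf w_j)\le-1$. This forces $|\mathbf m|\le -H_b(\mathbf w)-1\le C|\mathbf w|$, where $C$ is the operator norm of $H_b$. Hence each coefficient of $\phi_b(\xi)$ is a finite sum.

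For the estimate, take $\mathbf u$ in the image. Then $\mathbf u=\mathbf w+\sum m_j\mathbf w_j$ with $|\mathbf m|\le C'|\mathbf w|$, so $|\mathbf u|\le C''|\mathbf w|$, and conversely $|\mathbf w|\ge |\mathbf u|/C''$. The coefficient of $\mathbf t^{\mathbf u}$ is then bounded by $\max\,|a_{\mathbf w}|_\lambda\lambda^{|\mathbf m|}$. Combining $\lambda^{|\mathbf m|}\le\lambda^{C'|\mathbf w|}$ with $|a_{\mathbf w}|_\lambda\le\epsilon_{\mathbf w}\rho^{-|\mathbf w|}$, and after shrinking $\lambda$ toward $1$ (allowed since $L^\dagger_b$ is a union over $\lambda$), we get decay of the form $\rho'^{-|\mathbf u|}$ for some $\rho'>1$.

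Two further checks are needed. First, the number of pairs $(\mathbf w,\mathbf m)$ giving a fixed $\mathbf u$ is finite, because $\mathbf m$ is bounded in terms of $-H_b(\mathbf w)=-H_b(\mathbf u)+\sum m_jH_b(\mathbf w_j)$, which yields $|\mathbf m|\le -H_b(\mathbf u)$. Second, $\mathbf u\in\delta_{b-1}$, because $\delta_{b-1}$ is a convex cone containing the $\mathbf w_j$. Thus $\phi_b(\xi)\in Q_b^\dagger$. It also follows that $\phi_b$ descends to the quotient: if $\xi\in L_b^\dagger$, every term has $H_b\ge0$, so $\m{pr}_b$ kills the product.

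\emph{Inverse.} Grade by $H_b$. Multiplication by $\exp(\pm\pi F_{b,+})$ only raises $H_b$-degree, and $\m{pr}_b$ kills degrees $\ge0$, so the kernel of $\m{pr}_b$ is stable under this multiplication. This gives
\[\m{pr}_b\big(e^{-\pi F_{b,+}}\,\m{pr}_b(e^{\pi F_{b,+}}\xi)\big)=\m{pr}_b(e^{-\pi F_{b,+}}e^{\pi F_{b,+}}\xi)=\xi.\]
To make this rigorous, I will perform the computation in the formal ring $A[[\ldots]]$, where the product of the two exponentials is $1$; each coefficient there is a finite sum by the truncation above, so associativity holds. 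The symmetric identity gives the other composition.

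The main obstacle is the uniform overconvergence estimate in the first step, in particular controlling $|\mathbf u|$ versus $|\mathbf w|$ and absorbing the factor $\lambda^{|\mathbf m|}$. The truncation $|\mathbf m|\le -H_b(\mathbf w)$ is what makes both the estimate and the inverse argument work.
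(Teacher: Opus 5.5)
\paragraph{Review.} Your strategy is the paper's. You truncate the image of a single monomial via $|\mathbf m|\le -H_b(\mathbf w)-1\le C|\mathbf w|$, bound $|\mathbf u|\le C''|\mathbf w|$, and shrink $(\lambda,\rho)$ to $(\lambda',\rho')$. For the inverse you use that multiplication by $\exp(\pm\pi F_{b,+})$ only raises $H_b$-degree.

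\paragraph{The false finiteness claim.} A claim you rely on in both halves is false: the fibre $\{(\mathbf w,\mathbf m):\mathbf w+\sum_j m_j\mathbf w_j=\mathbf u\}$ is in general infinite. So the coefficients of $\phi_b(\xi)$ are not finite sums. Your truncation bounds $\mathbf m$ in terms of $\mathbf w$, not $\mathbf u$. For fixed $\mathbf u$ one has $H_b(\mathbf w)=H_b(\mathbf u)-\sum_j m_jH_b(\mathbf w_j)\le -1-|\mathbf m|$. Hence the constraint $H_b(\mathbf w)\le -1$ holds automatically and nothing bounds $\mathbf m$; your inequality $|\mathbf m|\le -H_b(\mathbf u)$ goes the wrong way.

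For instance, take $n=N=1$, $\mathbf w_1=1$, $b=1$, so $\delta_0=\mathbb R$. The coefficient of $t^{-1}$ in $\phi_1(\xi)$ is $\sum_{m\ge 0}\frac{\pi^m}{m!}x_1^m a_{-1-m}$, an infinite sum.

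\paragraph{Repairing well-definedness.} Here the error only costs a convergence check, and your own estimates supply it. Along the fibre, $C|\mathbf w|\ge |H_b(\mathbf w)|\ge 1+|\mathbf m|$, so $|\mathbf w|\to\infty$. Each term is bounded by $\epsilon_{\mathbf w}(\lambda'^{C}/\rho)^{|\mathbf w|}$, which tends to $0$ once $\lambda'^{C}\le\rho$. After that, your ultrametric bound on the coefficient is valid. The paper sidesteps the issue differently: it proves the estimate only on finite sums, then extends using density in $Q_{b,\lambda,\rho}$ and completeness of $Q_{b,\lambda',\rho'}$.

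\paragraph{Repairing the inverse step.} Here you invoke the same finiteness claim to justify associativity, and it fails more badly. The unprojected product $\exp(\pi F_{b,+})\xi$ has infinite, untruncated sums as coefficients, including in the $\delta_b$-range. Moreover, $A[[\mathbf t^{\mathbf w}:\mathbf w\in\delta_{b-1}\cap\mathbb Z^n]]$ is not a ring when $\delta_{b-1}$ contains a line, e.g.\ $\delta_0=\mathbb R^n$. (The paper's manipulation with $\xi_1+\xi_2$ is at the same formal level.)

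The clean fix is to compute on a monomial, where everything is finite, and then use continuity. Since $H_b(\mathbf w_j)>0$ for $j\in J_b$, the intermediate $\mathrm{pr}_b$ imposes no extra condition. Therefore
\[
\mathrm{pr}_b\, e^{-\pi F_{b,+}}\,\mathrm{pr}_b\, e^{\pi F_{b,+}}\bigl(a_{\mathbf w}\mathbf t^{\mathbf w}\bigr)
=\sum_{\mathbf m:\ H_b(\mathbf w+\sum_j m_j\mathbf w_j)\le -1}\Bigl(\sum_{\mathbf k+\mathbf l=\mathbf m}\frac{\pi^{|\mathbf k|}(-\pi)^{|\mathbf l|}}{\mathbf k!\,\mathbf l!}\Bigr)a_{\mathbf w}\mathbf x^{\mathbf m}\mathbf t^{\mathbf w+\sum_j m_j\mathbf w_j}
=a_{\mathbf w}\mathbf t^{\mathbf w},
\]
because the inner sum is $\prod_j(\pi-\pi)^{m_j}/m_j!$, which is $1$ for $\mathbf m=\mathbf 0$ and $0$ otherwise. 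Both composites are continuous maps $Q_{b,\lambda,\rho}\to Q_{b,\lambda'',\rho''}$, by your estimate applied twice. They agree with the inclusion on the dense subspace of finite sums, hence everywhere.
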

\begin{proof}
For $\lambda,\rho>1$, let $$Q_{b,\lambda,\rho}:=L_{b-1,\lambda,\rho}(\s U)/L_{b,\lambda,\rho}(\s U)=\big\{\sum_{\bf{w}\in \delta_{b-1}\cap\b{Z}^n,H_b(\bf{w})\le -1}a_{\bf{w}}\mathbf{t}^{\bf{w}}\big|\,|a_{\bf{w}}|_\lambda \rho^{|\bf{w}|}\to 0 \text{ as } |\bf{w}|\to \infty\big\}.$$
We have
\[Q_b^\dagger=\bigcup_{\lambda>1,\rho>1}Q_{b,\lambda,\rho}.\]
Let $J_b=\{j| 1\le j\le N, H_b(\bf{w}_j)>0\}$. For $a_{\bf{w}}\mathbf{t}^{\bf{w}}\in Q_{b,\lambda,\rho}$, we have
\[\big(\m{pr}_b\circ \exp(\pi F_{b,+}(\bf{x},\bf{t}))\big)(a_{\bf{w}}\mathbf{t}^{\bf{w}})=\sum_{\substack{\bf{k}=(k_j)_{j\in J_b}\in \b Z_{\geq 0}^{|J_b|}
\\ H_b(\bf{w})+\sum_{j\in J_b}k_jH_b(\bf{w}_j)\le -1}}\frac{\pi^{|\bf{k}|}}{\prod_{j\in J_b}k_j!}a_{\bf{w}}\bf{x}^{\bf{k}} \bf{t}^{\bf{w}+\sum_{j\in J_b}k_j\bf{w}_j}.\]
The condition $H_b(\bf{w})+\sum_{j\in J_b}k_jH_b(\bf{w}_j)\le -1$ implies $|\bf{k}|\le \frac{-H_b(\bf{w})}{\min_{j\in J_b}H_b(\bf{w}_j)}$ and hence the above sum is finite. Moreover, there exist constants $C_1$ and $C_2$ such that
$$|\bf{k}|\le  \frac{-H_b(\bf{w})}{\min_{j\in J_b}H_b(\bf{w}_j)}\le C_1\cdot|\bf{w}|, \quad |\bf{w}+\sum_{j\in J_b}k_j\bf{w}_j|\le |\bf{w}|+|\bf{k}|\cdot \max_{j\in J_b}|\bf{w}_j|\le C_2\cdot|\bf{w}|.$$
Here $C_1$ depends on the coefficients of the linear form $H_b$ and the values $H_b(\bf{w}_j)$, while $C_2$ depends on $C_1$ and $|\bf{w}_j|$. Let $\lambda'=\rho'>1$ such that $\rho'^{C_1+C_2}<\rho$ and $1<\lambda'<\lambda$. Viewing the above sum as an element in $Q_{b, \lambda',\rho'}$, we obtain
\begin{eqnarray*}
&&\big|\frac{\pi^{|\bf{k}|}}{\prod_{j\in J_b}k_j!}\big|\cdot |a_{\bf{w}}|_{\lambda'}|\bf{x}^{\bf{k}}|_{\lambda'}\cdot {\rho'}^{|\bf{w}+\sum_{j\in J_b}k_j\bf{w}_j|}\\
&\le&|a_{\bf{w}}|_{\lambda}{\lambda'}^{|\bf{k}|}{\rho'}^{|\bf{w}+\sum_{j\in J_b}k_j\bf{w}_j|}\\
&\le&|a_{\bf{w}}|_{\lambda}{\rho'}^{(C_1+C_2)|\bf{w}|}\\
&\le&|a_{\bf{w}}|_{\lambda}\rho^{|\bf{w}|}.
\end{eqnarray*}
The above norm estimate shows that $\m{pr}_b\circ \exp (\pi F_{b,+}(\bf{x},\bf{t}))$ is continuous on the subspace of finite sums in $Q_{b,\lambda,\rho}$. Since this subspace is dense and $Q_{b,\lambda',\rho'}$ is complete, it extends uniquely to a continuous map from $Q_{b,\lambda,\rho}$ to $Q_{b,\lambda',\rho'}$. Passing to the limit proves the first assertion.

Let $\xi\in Q_b^\dagger$ be represented by a power series supported in $\delta_{b-1}\setminus\delta_b$, and let
$$\exp (\pi F_{b,+}(\bf{x},\bf{t}))\cdot \xi=\xi_1+\xi_2,$$
where $\xi_1$ and $\xi_2$ are supported in $\delta_{b-1}\setminus\delta_b$ and $\delta_b$, respectively. We have
\begin{eqnarray*}
&&\big(\m{pr}_b\circ \exp (-\pi F_{b,+}(\bf{x},\bf{t}))\big)\circ \big(\m{pr}_b\circ \exp (\pi F_{b,+}(\bf{x},\bf{t}))\big)(\xi)\\
&=&\big(\m{pr}_b\circ \exp (-\pi F_{b,+}(\bf{x},\bf{t}))\big)(\exp (\pi F_{b,+}(\bf{x},\bf{t}))\cdot \xi-\xi_2)\\
&=&\m{pr}_b\big(\xi-\exp (-\pi F_{b,+}(\bf{x},\bf{t}))\cdot\xi_2\big)\\
&=&\xi.
\end{eqnarray*}
The last equality follows from the fact that $\exp (-\pi F_{b,+}(\bf{x},\bf{t}))$ and $\xi_2$ are supported in $\delta_b$ and thus annihilated by $\m{pr}_b$. Interchanging $F_{b,+}(\bf{x},\bf{t})$ and $-F_{b,+}(\bf{x},\bf{t})$ gives the reverse composition. Thus the two operators are mutually inverse.
\end{proof}

The quotient complex $C_\gamma^\bullet(Q_b^\dagger)$ of $C_{\gamma}^{\bullet}\bigl(L_b^\dagger(\s U)\bigr)\rightarrow C_{\gamma}^{\bullet}\bigl(L_{b-1}^\dagger(\s U)\bigr)$ is given by
$$C^k_\gamma(Q_b^\dagger)=\bigoplus_{1\leq i_1<\cdots<i_k\leq n}Q_b^\dagger\frac{dt_{i_1}}{t_{i_1}}\wedge\cdots\wedge\frac{dt_{i_k}}{t_{i_k}}.$$
The differential map $d_k:C^k_\gamma(Q_b^\dagger)\to C^{k+1}_\gamma(Q_b^\dagger)$ is given by
\[d_k:C^k_{\gamma}(Q_b^{\dagger})\to C_{\gamma}^{k+1}(Q_b^{\dagger}),\quad\xi\frac{dt_{i_1}}{t_{i_1}}\wedge\cdots\wedge\frac{dt_{i_k}}{t_{i_k}}\mapsto\sum_{i=1}^n\m{pr}_b\circ E_{i,\gamma}(\xi)\frac{dt_i}{t_i}\wedge\frac{dt_{i_1}}{t_{i_1}}\wedge\cdots\wedge\frac{dt_{i_k}}{t_{i_k}},\]
where $E_{i,\gamma}=t_i\partial_{t_i}+\sum_{j=1}^N\pi w_{ij}x_j\mathbf{t}^{\mathbf{w}_j}+\gamma_i.$
We define a twisted complex $\tilde{C}_\gamma^\bullet(Q_b^\dagger)$ to have the same underlying graded module as $C_\gamma^\bullet(Q_b^\dagger)$, with differential $\tilde d_k$ obtained by replacing $E_{i,\gamma}$ by
 $$\tilde{E}_{i,\gamma}=t_i\partial_{t_i}+\sum_{j\notin J_b}\pi w_{ij}x_j\mathbf{t}^{\mathbf{w}_j}+\gamma_i.$$
\begin{prop}\label{trans}
Applying the operator $\phi_b=\m{pr}_b\circ \exp (\pi F_{b,+}(\bf{x},\bf{t}))$ to each factor of every degree defines an isomorphism of complexes
$$C_\gamma^\bullet(Q_b^\dagger)\to \tilde{C}_\gamma^\bullet(Q_b^\dagger).$$
\end{prop}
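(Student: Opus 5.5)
By \Cref{inverse}, $\phi_b$ is a well-defined bijection of $Q_b^\dagger$ with inverse $\phi_b^{-1}=\m{pr}_b\circ\exp(-\pi F_{b,+}(\bf{x},\bf{t}))$. Applied factorwise on each $C^k_\gamma(Q_b^\dagger)$, it is therefore a bijective map of graded modules onto $\tilde C^\bullet_\gamma(Q_b^\dagger)$. The only thing left to prove is that it commutes with the differentials, i.e. that
\[\phi_b\circ\bigl(\m{pr}_b\circ E_{i,\gamma}\bigr)=\bigl(\m{pr}_b\circ\tilde E_{i,\gamma}\bigr)\circ\phi_b\quad\text{on } Q_b^\dagger,\qquad i=1,\ldots,n.\]
The differentials of both complexes are built from these operators by wedging with $\frac{dt_i}{t_i}$ and summing over $i$, and $\phi_b$ acts on coefficients only. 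So this identity gives the chain map property, and the inverse is then automatically a chain map.

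The key formal computation is the conjugation identity
\[\exp(\pi F_{b,+})\circ E_{i,\gamma}\circ\exp(-\pi F_{b,+})=E_{i,\gamma}-t_i\partial_{t_i}(\pi F_{b,+})=E_{i,\gamma}-\sum_{j\in J_b}\pi w_{ij}x_j\bf{t}^{\bf{w}_j}=\tilde E_{i,\gamma}.\]
It holds because $t_i\partial_{t_i}(\bf{t}^{\bf{w}_j})=w_{ij}\bf{t}^{\bf{w}_j}$, and multiplication operators commute. Equivalently, $\exp(\pi F_{b,+})\circ E_{i,\gamma}=\tilde E_{i,\gamma}\circ\exp(\pi F_{b,+})$. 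I would first verify this on a single monomial $a_{\bf w}\bf t^{\bf w}$, where all sums are finite after projection, and then extend by continuity and density using the estimates from the proof of \Cref{inverse}.

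The genuinely delicate step is the interaction with $\m{pr}_b$, since $\exp(\pi F_{b,+})\xi$ is not itself in the module: only its projection is. The plan is to show that $\m{pr}_b$ can be freely inserted or removed next to the relevant operators. The supports involved all lie in $\delta_{b-1}$: every $\bf w_j$ lies in $\delta\subseteq\delta_{b-1}$, and $H_b(\bf w_j)\ge 0$ for all $j$. Let $\eta$ be supported in $\delta_b\cap\delta_{b-1}$, that is, with $H_b\ge0$. Then multiplication by $\bf t^{\bf w_j}$ and by $\exp(\pi F_{b,+})$ preserves the condition $H_b\ge 0$, and $t_i\partial_{t_i}$ and the constant $\gamma_i$ preserve supports. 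Hence
\[\m{pr}_b\circ E_{i,\gamma}\circ\m{pr}_b=\m{pr}_b\circ E_{i,\gamma},\quad \m{pr}_b\circ\tilde E_{i,\gamma}\circ\m{pr}_b=\m{pr}_b\circ\tilde E_{i,\gamma},\quad \m{pr}_b\circ\exp(\pi F_{b,+})\circ\m{pr}_b=\m{pr}_b\circ\exp(\pi F_{b,+}).\]
With these three identities, for $\xi\in Q_b^\dagger$ one computes
\[\phi_b\,\m{pr}_b E_{i,\gamma}\xi=\m{pr}_b\exp(\pi F_{b,+})E_{i,\gamma}\xi=\m{pr}_b\tilde E_{i,\gamma}\exp(\pi F_{b,+})\xi=\m{pr}_b\tilde E_{i,\gamma}\,\m{pr}_b\exp(\pi F_{b,+})\xi=\m{pr}_b\tilde E_{i,\gamma}\phi_b\xi.\]
Here the middle expressions are formal series in $A[[\bf t^{\bf w}:\bf w\in\delta_{b-1}\cap\b Z^n]]$; only their projections need to converge.

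I expect the main obstacle to be justifying this formal manipulation analytically. The unprojected product $\exp(\pi F_{b,+})\xi$ involves infinitely many terms landing in $\delta_b$, and their coefficient growth is not controlled. To handle this, I would argue coefficientwise. For a fixed target exponent $\bf v$ with $H_b(\bf v)\le-1$, only finitely many $\bf k$ contribute (the bound $|\bf k|\le C_1|\bf w|$ from \Cref{inverse}, together with $\bf v=\bf w+\sum k_j\bf w_j$, bounds $|\bf w|$ in terms of $|\bf v|$). So each coefficient of either side is a finite sum, and the identity holds coefficientwise by the monomial computation. Both sides are continuous maps $Q_{b,\lambda,\rho}\to Q_{b,\lambda',\rho'}$ by the estimates in \Cref{inverse}, and $E_{i,\gamma},\tilde E_{i,\gamma}$ are continuous because $t_i\partial_{t_i}$ only introduces polynomial factors in $|\bf w|$. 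Coefficientwise equality therefore gives equality in $Q_b^\dagger$, which completes the argument.
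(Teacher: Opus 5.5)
Your argument is the paper's own. It uses the conjugation identity $\exp(\pi F_{b,+})\circ E_{i,\gamma}=\tilde E_{i,\gamma}\circ\exp(\pi F_{b,+})$ and the support identities that let $\mathrm{pr}_b$ be inserted next to $\exp(\pi F_{b,+})$ and $\tilde E_{i,\gamma}$. It runs the same chain of equalities, with bijectivity supplied by \Cref{inverse}.

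One sentence in the analytic justification you add at the end (which the paper does not attempt) is false, however. For a fixed target exponent $\mathbf v$ with $H_b(\mathbf v)\le -1$, infinitely many source exponents can contribute. The bound $|\mathbf k|\le C_1|\mathbf w|$ controls $\mathbf k$ in terms of the source $\mathbf w$, not the target $\mathbf v$. For instance, take $n=N=1$, $\mathbf w_1=1$, $b=1$. Then the coefficient of $t^{-1}$ in $\exp(\pi x_1t)\,\xi$ is $\sum_{k\ge 0}\frac{\pi^k}{k!}x_1^k a_{-1-k}$, which has infinitely many terms.

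These coefficient series do converge, since their terms tend to $0$ in $|\cdot|_{\lambda'}$ by the estimate in \Cref{inverse}. But you do not need them: the monomial-plus-density argument you sketch earlier already suffices.
\begin{itemize}
\item Both $\phi_b\circ\mathrm{pr}_b\circ E_{i,\gamma}$ and $\mathrm{pr}_b\circ\tilde E_{i,\gamma}\circ\phi_b$ are continuous maps $Q_{b,\lambda,\rho}\to Q_{b,\lambda',\rho'}$.
\item They agree on monomials. There every coefficient in the computation is a finite sum, since for fixed source and target $|\mathbf k|\le H_b(\mathbf v-\mathbf w)$.
\item Finite sums are dense in $Q_{b,\lambda,\rho}$.
\end{itemize}
With the finiteness sentence removed, your proof is complete.
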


\begin{proof}
By Lemma \ref{inverse}, it suffices to verify, for each $i$, the commutativity of the following diagram:
\[\begin{tikzcd}[column sep=large]
Q_b^\dagger \arrow[r,"\m{pr}_b\circ E_{i,\gamma}"] \arrow[d,"\phi_b"'] & Q_b^\dagger \arrow[d,"\phi_b"]
\\ Q_b^\dagger \arrow[r,"\m{pr}_b\circ \tilde{E}_{i,\gamma}"'] & Q_b^\dagger
\end{tikzcd}\]
By definition, we have
\begin{align*}
    E_{i,\gamma} & =\exp(-\pi F(\bf{x},\bf{t}))\circ t_i\partial_{t_i}\circ \exp(\pi F(\bf{x},\bf{t}))+\gamma_i, \\ \tilde{E}_{i,\gamma} & =\exp(-\pi F_b(\bf{x},\bf{t}))\circ t_i\partial_{t_i}\circ \exp(\pi F_b(\bf{x},\bf{t}))+\gamma_i.
\end{align*}
Hence
\[\widetilde E_{i,\gamma}\circ\exp(\pi F_{b,+}(\bf{x},\bf{t}))=\exp(\pi F_{b,+}(\bf{x},\bf{t}))\circ E_{i,\gamma}.\]
Since the operators
$\exp(\pi F_{b,+})$ and $\widetilde E_{i,\gamma}$ preserve the subspace of power series supported in $\delta_b$, we have
\[ \operatorname{pr}_b\circ\exp(\pi F_{b,+})\circ\operatorname{pr}_b = \operatorname{pr}_b\circ\exp(\pi F_{b,+}),\quad 
\operatorname{pr}_b\circ\widetilde  E_{i,\gamma}\circ\operatorname{pr}_b = \operatorname{pr}_b\circ\widetilde E_{i,\gamma}. \]
For $\xi\in Q_b^\dagger$, we have
\begin{eqnarray*}
\phi_b\circ \m{pr}_b\circ E_{i,\gamma}(\xi)&=&\m{pr}_b\circ \exp(\pi F_{b,+}(\bf{x},\bf{t}))\circ\m{pr}_b\circ E_{i,\gamma}(\xi)\\
&=&\m{pr}_b\circ \exp(\pi F_{b,+}(\bf{x},\bf{t}))\circ E_{i,\gamma}(\xi)\\
&=&\m{pr}_b\circ\tilde E_{i,\gamma}\circ\exp(\pi F_{b,+}(\bf{x},\bf{t}))(\xi)\\
&=&\operatorname{pr}_b\circ\tilde E_{i,\gamma}\circ\operatorname{pr}_b\circ\exp(\pi F_{b,+}(\bf{x},\bf{t}))(\xi)\\
&=&\operatorname{pr}_b\circ\tilde E_{i,\gamma}\circ \phi_b(\xi).
\end{eqnarray*}
This proves the proposition.
\end{proof}

\medskip

\subsection{Proof of \Cref{comp}}
Since the canonical morphism $L_b^{\dagger}(\s U)\to L_{b-1}^{\dagger}(\s U)$ is injective, we have \[ \m{Cone}\Big(C_{\gamma}^{\bullet}\big(L_b^{\dagger}(\s U)\big)\to C_{\gamma}^{\bullet}\big(L_{b-1}^{\dagger}(\s U)\big)\Big)\cong C_\gamma^\bullet(Q_b^\dagger)\] in the derived category. Therefore, we can prove the assertion by showing $C_\gamma^\bullet(Q_b^\dagger)$ is acyclic. By Proposition \ref{trans}, it suffices to prove the acyclicity of  $\tilde{C}_\gamma^\bullet(Q_b^\dagger)$. Let $H_b(z)=\sum_{i=1}^n c_i z_i\in \b Z[z_1,\cdots, z_n]$ be a defining linear form for the hyperplane. Define
$$\iota_b:C^k(Q_b^\dagger)\to C^{k-1}(Q_b^\dagger): \xi\frac{dt_{i_1}}{t_{i_1}}\wedge\cdots\wedge\frac{dt_{i_k}}{t_{i_k}}\mapsto\sum_{j=1}^k(-1)^{j-1}c_{i_j}\xi\frac{dt_{i_1}}{t_{i_1}}\wedge\cdots\wedge\widehat{\frac{dt_{i_j}}{t_{i_j}}}\wedge\cdots\wedge\frac{dt_{i_k}}{t_{i_k}}.$$
A direct computation gives
\begin{eqnarray*}
(\iota_b\circ \tilde d_k+\tilde d_{k-1}\circ \iota_b)\big(\xi\frac{dt_{i_1}}{t_{i_1}}\wedge\cdots\wedge\frac{dt_{i_k}}{t_{i_k}}\big)&=&\sum_{i=1}^n \m{pr}_b\circ c_i \tilde E_{i,\gamma}(\xi)\frac{dt_{i_1}}{t_{i_1}}\wedge\cdots\wedge\frac{dt_{i_k}}{t_{i_k}}\\
&=&\big(\sum_{i=1}^n c_it_i\partial_{t_i}+H_b(\gamma)\big)(\xi)\frac{dt_{i_1}}{t_{i_1}}\wedge\cdots\wedge\frac{dt_{i_k}}{t_{i_k}}.
\end{eqnarray*}
Thus the endomorphism $\alpha^\bullet$ of $\tilde{C}_\gamma^\bullet(Q_b^\dagger)$ defined by
$$\alpha_k\big(\xi\frac{dt_{i_1}}{t_{i_1}}\wedge\cdots\wedge\frac{dt_{i_k}}{t_{i_k}}\big)= \big(\sum_{i=1}^nc_it_i\partial_{t_i}+H_b(\gamma)\big)(\xi)\frac{dt_{i_1}}{t_{i_1}}\wedge\cdots\wedge\frac{dt_{i_k}}{t_{i_k}}$$
is null-homotopic. It therefore remains to show that $\alpha^\bullet$ is an automorphism. For $\xi=\sum_{\bf{w}\in \delta_{b-1}\cap\b Z^n, H_b(\bf{w})\le -1}a_{\bf{w}}t^{\bf{w}}\in Q_b^\dagger$, define
$$\beta_0(\xi)=\sum_{\bf{w}\in\delta_{b-1}\cap\b{Z}^n,H_b(\bf{w})\leq -1} \frac{a_{\bf{w}}}{H_b(\bf{w})+H_b(\gamma)}\mathbf{t}^{\bf{w}}.$$
It is well defined since $H_b(\gamma)\notin \b Z_{>0}$.
Moreover, $\beta_0\alpha_0=\alpha_0\beta_0=\text{Id}_{Q_b^\dagger}$. Now we prove $\beta_0(\xi)\in Q_b^\dagger$.
Suppose $\xi\in Q_{b,\lambda,\rho}$ such that
$$|a_{\bf{w}}|_\lambda \rho^{|\bf{w}|}\to 0 \text{ as }|\bf{w}|\to \infty.$$
Since $H_b(\gamma)$ is $p$-adically non-Liouville, for any $\rho'>1$, there exists a constant $C$ such that
$$\big| \frac{1}{H_b(\bf{w})+H_b(\gamma)}\big|\le C{\rho'}^{|H_b(\bf{w})|}.$$
Choose $\rho'>1$ such that ${\rho'}^{1+\max_i |c_i|}\le \rho$. We obtain
\begin{eqnarray*}
\big|\frac{a_{\bf{w}}}{H_b(\bf{w})+H_b(\gamma)}\big|_\lambda \rho'^{|\bf{w}|}&\le& |a_{\bf{w}}|_\lambda \cdot \big| \frac{1}{H_b(\bf{w})+H_b(\gamma)}\big| \rho'^{|\bf{w}|}\\
&\le&|a_{\bf{w}}|_\lambda \cdot C{\rho'}^{|H_b(\bf{w})|+|\bf{w}|}\\
&\le& C\cdot |a_{\bf{w}}|_\lambda \rho^{|\bf{w}|}.
\end{eqnarray*}
It tends to $0$ when $|\bf{w}|\to \infty$. Hence $\beta_0(Q_{b,\lambda,\rho})\subset Q_{b,\lambda,\rho'}.$ Applying $\beta_0$ termwise in each degree yields an inverse $\beta^\bullet$ of $\alpha^\bullet$ on $\tilde{C}_\gamma^\bullet(Q_b^\dagger)$. This proves the proposition.

\begin{proof}[Proof of \Cref{main}]
    By applying \Cref{comp} successively, this follows from the facts $C^\bullet_\gamma(\c L^\dagger)=C_{\gamma}^{\bullet}\big(\c{L}_s^{\dagger}\big)$
    and $\operatorname{Hyp}_{\gamma}=C_{\gamma}^{\bullet}\big(\c{L}_0^{\dagger}\big).$
\end{proof}

Note that $\m{Hyp}_{\gamma}\cong\m{Hyp}_{\gamma-\bf{v}}$ for any vector $\bf{v}\in\b{Z}^n$. We have the following corollary:
\begin{cor}\label{Hypcoherent}
    If the numbers $H_1(\gamma),\cdots,H_s(\gamma)$ are all $p$-adically non-Liouville, then $\m{Hyp}_{\gamma}$ is a coherent $\c{D}_{\s{P}^N,\b{Q}}^{\dagger}(\infty_N)$-complex.
\end{cor}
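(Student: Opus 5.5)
The plan is to shift $\gamma$ by an integral vector so that the shifted parameter is $p$-adically semi-nonresonant. Then \Cref{main} identifies $\m{Hyp}_{\gamma-\mathbf v}\cong\m{Hyp}_\gamma$ with the complex $C^\bullet_{\gamma-\mathbf v}(\c L^\dagger)$, which is coherent by construction (\cite[Proposition 1.8]{padictorus}, as recalled in \Cref{C}).

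First I will justify the invariance $\m{Hyp}_\gamma\cong\m{Hyp}_{\gamma-\mathbf v}$ for $\mathbf v\in\b Z^n$. The Kummer isocrystals $\c K_{\gamma_i}$ and $\c K_{\gamma_i-v_i}$ on $\b T^1_\kappa$ are isomorphic, since multiplication by $t_i^{v_i}$ is a horizontal isomorphism between $\c O$ with connection $d+\gamma_i\,dt_i/t_i$ and $\c O$ with connection $d+(\gamma_i-v_i)\,dt_i/t_i$. Here $t_i^{v_i}$ is a unit on the torus, hence an overconvergent unit. Taking exterior products gives $\c K_\gamma\cong\c K_{\gamma-\mathbf v}$. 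Applying $\rho_1^\ast$, tensoring with $F^\ast\c L_\pi$, and then applying $\m{sp}_+$ and $\rho_{2,+}$ yields the claimed isomorphism in $D^b(\c D^\dagger_{\s P^N,\b Q}(\infty_N))$.

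Next I will choose $\mathbf v$. Since the $H_b$ are linear with integer coefficients, $H_b(\gamma-\mathbf v)=H_b(\gamma)-H_b(\mathbf v)$. A translate of a non-Liouville number by an integer is again non-Liouville (\Cref{Liouville}), so non-Liouvilleness holds for every shift. It remains to make $H_b(\gamma)-H_b(\mathbf v)\notin\b Z_{>0}$ for all $b$ simultaneously. Take $\mathbf v$ in the interior of $-\delta$ and far from its boundary, say $\mathbf v=-m\mathbf u$ with $\mathbf u\in\mathrm{int}(\delta)\cap\b Z^n$ and $m\gg0$, so that $H_b(\mathbf v)\le -m$ for all $b$. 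For any $b$ with $H_b(\gamma)\notin\b Z$, the shifted value $H_b(\gamma-\mathbf v)$ is still not an integer, so nothing needs checking. For any $b$ with $H_b(\gamma)\in\b Z$, I instead want $H_b(\gamma-\mathbf v)\le 0$, which requires $H_b(\mathbf v)\ge H_b(\gamma)$.

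This reverses the sign of the previous choice, so I will take $\mathbf v=m\mathbf u$ with $\mathbf u\in\mathrm{int}(\delta)$. Then $H_b(\mathbf v)\ge m$ for every $b$, because each $H_b$ is positive on $\mathrm{int}(\delta)$ and integral, hence $\ge1$ on $\mathbf u$. Choosing $m$ at least the maximum of the integral values $H_b(\gamma)$ makes $H_b(\gamma-\mathbf v)\le0$ for all integral ones. Hence $\gamma-\mathbf v$ is $p$-adically semi-nonresonant. I expect this choice to be the only step requiring care, namely using the interior of $\delta$ to control all facets at once. Since $\dim\Delta=n$, the cone $\delta$ is full-dimensional and such a $\mathbf u$ exists.

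Finally, \Cref{main} applied to $\gamma-\mathbf v$ gives a quasi-isomorphism $C^\bullet_{\gamma-\mathbf v}(\c L^\dagger)\to\m{Hyp}_{\gamma-\mathbf v}\cong\m{Hyp}_\gamma$. Because the source lies in $D^b_{\m{coh}}$, so does $\m{Hyp}_\gamma$.
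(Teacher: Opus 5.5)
Your proof is correct and follows the paper's own argument. You use the integral-shift invariance $\m{Hyp}_\gamma\cong\m{Hyp}_{\gamma-\mathbf v}$ to reduce to a $p$-adically semi-nonresonant parameter, then apply \Cref{main} to identify $\m{Hyp}_\gamma$ with the coherent complex $C^\bullet_{\gamma-\mathbf v}(\c L^\dagger)$, and your explicit choice $\mathbf v=m\mathbf u$ with $\mathbf u\in\mathrm{int}(\delta)\cap\b Z^n$ supplies the step the paper leaves implicit (just delete the abandoned first choice $\mathbf v=-m\mathbf u$ from the write-up).
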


\begin{rmk}
    $\m{Hyp}_{\gamma}$ is not always coherent as a $\c{D}_{\s{P}^N,\b{Q}}^{\dagger}(\infty_N)$-complex. For example, let $F:\mathbb{T}_{\kappa}^1\times \b A^1_\kappa\to \b{A}_{\kappa}^1, \ \big((t,x)\mapsto xt\big)$ be the torus action on $\b A_{\kappa}^1$, $\gamma$ the $p$-adic Liouville number given in \Cref{Liouville}. We show that $\m{Hyp}_{\gamma}$ is not a coherent $\c{D}_{\s{P}^1,\b{Q}}^{\dagger}(\infty_1)$-complex as follows.

    Consider the global sections. It suffices to show that \[H^1:=\m{Coker}(K\langle x,t^{\pm }\rangle^{\dagger}\xrightarrow{t\partial_t+\gamma+\pi xt}K\langle x,t^{\pm }\rangle^{\dagger})\] is not coherent as a $K\langle x,\frac{\partial_x}{\pi}\rangle^{\dagger}$-module. One checks that the cokernel is isomorphic to $K\langle t^{\pm }\rangle^{\dagger}$ as $K$-linear spaces. For any monomial $t^v$, the $K\langle x,\frac{\partial_x}{\pi}\rangle^{\dagger}$-module structure on $K\langle t^{\pm }\rangle^{\dagger}$ is defined by $x\cdot t^v=-\frac{v+\gamma-1}{\pi}t^{v-1}$ and $\partial_x\cdot t^v=\pi t^{v+1}$.
    
    Assume $H^1$ can be generated by finitely many elements \[g_1=\sum_{u\in\b{Z}}b_{1,u}t^{u},\cdots,g_s=\sum_{u\in\b{Z}}b_{s,u}t^{u}.\] We may choose $c>0,0<\eta<1$ such that $|b_{r,u}|\leq c\eta^{|u|}$ for any $u\in\b{Z},r=1,\cdots,s$.
    Moreover, let $[\gamma]_m:=\sum_{l=1}^{m}p^{r_l}$. After enlarging $\eta$ if necessary, we may assume $|\gamma-[\gamma]_m|<\eta^{[\gamma]_m}$ for all $m$.
    Choose $k$ large enough such that $\eta<p^{-1/k}$. After replacing $K$ by a finite extension, we may assume $p^{1/k}\in K$. Then $f:=\sum_{m=1}^{\infty}p^{\frac{[\gamma]_m}{k}}t^{-[\gamma]_m}$ belongs to $H^1$.
    If there exist $P_1=\sum_{v,w\in\b{Z}_{\geq 0}}a_{1,v,w}x^{v}\frac{\partial_x^{w}}{\pi^w},\cdots,P_s=\sum_{v,w\in\b{Z}_{\geq 0}}a_{s,v,w}x^{v}\frac{\partial_x^{w}}{\pi^w}$ such that $P_1(g_1)+\cdots+P_s(g_s)=f$, then we have 
    \begin{equation}\label{coe}\sum_{1\leq r\leq s,w+u-v=-[\gamma]_m}a_{r,v,w}x^{v}\frac{\partial_x^{w}}{\pi^w}(b_{r,u}t^{u})=p^{\frac{[\gamma]_m}{k}}t^{-[\gamma]_m}.\end{equation}
    The coefficients on the left-hand side are of the following two forms.  For $v\geq 1$, the coefficient is  \[a_{r,v,w}b_{r,u}\frac{(\gamma+u+w-1)\cdots(\gamma+u+w-v)}{(-\pi)^v}=a_{r,v,w}b_{r,u}\frac{(v-1)!}{(-\pi)^v}\binom{\gamma-[\gamma]_m+v-1}{v-1}(\gamma-[\gamma]_m). \]
    For $v=0$ the corresponding coefficient is $a_{r,v,w}b_{r,u}$.
    
    Since $P_1,\cdots, P_s$ belong to $K\langle x,\frac{\partial_x}{\pi}\rangle^{\dagger}$, there exist $c'>0,0<\eta'<1$ such that $|a_{r,v,w}|\leq c'\eta'^{v+w}$ for all $r,v,w$.
    For $v\geq 1$, one checks that the family $\{a_{r,v,w}\frac{(v-1)!}{\pi^v}\}_{r,v,w}$ is bounded. As $\binom{\gamma-[\gamma]_m+v-1}{v-1}\in\b{Z}_p$ and $\{b_{r,u}\}_{r,u}$ are bounded, we have the estimate:
    \[\big|a_{r,v,w}b_{r,u}\frac{(v-1)!}{\pi^v}\binom{\gamma-[\gamma]_m+v-1}{v-1}(\gamma-[\gamma]_m)\big|\le C\eta^{[\gamma]_m}\] for some constant $C$.
    In the case $v=0$, we have
    \[|a_{r,0,w}b_{r,u}|\le cc'\eta'^{w}\eta^{|u|}=cc'\eta'^{w}\eta^{w+[\gamma]_m}\le cc'\eta^{[\gamma]_m}.\]
    Comparing the coefficients in \ref{coe}, we obtain
    \[|p^{\frac{[\gamma]_m}{k}}|=p^{-\frac{[\gamma]_m}{k}}\leq C_0\eta^{[\gamma]_m}\] for some constant $C_0>0$. This leads to a contradiction for sufficiently large $m$ since $\eta<p^{-1/k}$.
\end{rmk}

\section{Proof of \Cref{complex} and \Cref{Dmodule}}\label{moreon}

\begin{prop}\label{Hk=0}
For any $\gamma\in \b{Z}_p^n$, we have $H^k( C^\bullet_\gamma(\c L^\dagger))=0$ for $k<n.$
\end{prop}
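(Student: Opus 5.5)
The plan is to reduce the vanishing to a purely algebraic statement and then to a Koszul-regularity statement on a Cohen--Macaulay ring. Since the statement concerns all $\gamma\in\b Z_p^n$, nothing in it can use \Cref{main}. Instead I will work with the algebraic model recalled in the proof of \Cref{CL}. Let $L=K[\mathbf x][\delta\cap\b Z^n]$, and let $C^\bullet_\gamma(L)$ be the twisted logarithmic Koszul complex on $L$ with the same differential $d_\gamma$, given by the operators $E_{i,\gamma}$. It is a complex of finitely generated modules over the Weyl algebra $D$.

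\textbf{Step 1 (reduction to $L$).} I will use the compatible finite presentations of $L$ over $D$ and of $L^\dagger$ over $D^\dagger$ from \cite[Section 3.7]{padictorus}, already invoked in the proof of \Cref{CL}. They give an identification of complexes
\[C^\bullet_\gamma(L^\dagger)\cong D^\dagger\otimes_D C^\bullet_\gamma(L).\]
Granting flatness of $D^\dagger=\Gamma(\s P^N,\c D^\dagger_{\s P^N,\b Q}(\infty_N))$ over $D$ (Huyghe's results), this yields $H^k(C^\bullet_\gamma(L^\dagger))\cong D^\dagger\otimes_D H^k(C^\bullet_\gamma(L))$. Passing to the sheaf $C^\bullet_\gamma(\c L^\dagger)$ is then harmless, by \cite[5.3.3]{Ddaggeraffine}. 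So it suffices to show that $H^k(C^\bullet_\gamma(L))=0$ for $k<n$.

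\textbf{Step 2 (filtration and graded pieces).} I grade $L$ by total $\mathbf x$-degree, so that $t$ has degree $0$, and filter $C^\bullet_\gamma(L)$ by $\mathbf x$-degree. In $E_{i,\gamma}=t_i\partial_{t_i}+\gamma_i+\pi\sum_j w_{ij}x_j\mathbf t^{\mathbf w_j}$, the first two terms preserve $\mathbf x$-degree and the last raises it by exactly one. Hence the decreasing filtration $F^p=\{\mathbf x\text{-degree}\ge p-k \text{ in } C^k\}$ is preserved by $d_\gamma$. Its associated graded complex is the Koszul complex of multiplication by
\[G_i=\pi\sum_j w_{ij}x_j\mathbf t^{\mathbf w_j},\qquad i=1,\dots,n,\]
on $L$, and in particular it does not depend on $\gamma$. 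In each fixed total degree the filtration is finite, because the $\mathbf x$-degree is bounded by the total degree. So a standard spectral sequence argument, or an induction on the filtration, shows that acyclicity of the graded complex in degrees $<n$ implies the same for $C^\bullet_\gamma(L)$.

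\textbf{Step 3 (regular sequence) — the main point.} By Hochster's theorem, the normal affine semigroup ring $K[\delta\cap\b Z^n]$ is Cohen--Macaulay, so $L=K[\delta\cap\b Z^n][\mathbf x]$ is Cohen--Macaulay of dimension $n+N$. Since $G_1,\dots,G_n$ are homogeneous of positive degree, it suffices to check that $V(G_1,\dots,G_n)$ has dimension $\le N$; then the $G_i$ form a regular sequence, and the Koszul complex has cohomology only in degree $n$.

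For the dimension count I would stratify $X_\delta=\operatorname{Spec}K[\delta\cap\b Z^n]$ by torus orbits $O_\tau$, indexed by the faces $\tau$ of $\delta$, with $\dim O_\tau=\dim\tau$. On $O_\tau$ only the monomials with $\mathbf w_j\in\tau$ survive. Over a point $t\in O_\tau$ the conditions $G_i=0$ are linear in $\mathbf x$, with coefficient vectors $\mathbf w_jt^{\mathbf w_j}$ for $\mathbf w_j\in\tau$, and these span a space of dimension $\dim\tau$. The fiber over $t$ is therefore a linear space of dimension $N-\dim\tau$. Consequently the part of $V(G)$ over $O_\tau$ has dimension $\dim\tau+N-\dim\tau=N$, and so $\dim V(G)=N$.

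I expect the genuine obstacles to be the flatness and base-change identification in Step 1 and this dimension count. If flatness of $D^\dagger$ over $D$ is unavailable in the needed form, the fallback is to run the filtration argument of Step 2 directly on $L^\dagger$, with norm estimates of the type used in \Cref{inverse}, following the method of \cite{padictorus}.
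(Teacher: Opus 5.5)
Your Step 1 is the paper's own reduction: the identification $C^\bullet_\gamma(L^\dagger)\cong D^\dagger\otimes_D C^\bullet_\gamma(L)$ together with right flatness of $D^\dagger$ over $D$. Your proof of the algebraic vanishing is genuinely different, and it is correct once Step 2 is repaired.

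\textbf{The problem in Step 2.} As written, Step 2 fails. For $F^pC^k=\{\deg_{\mathbf x}\ge p-k\}$ one has $\operatorname{gr}^pC^k=L_{p-k}$, the $\mathbf x$-degree $p-k$ part. But $d_\gamma$ maps $L_{p-k}$ into $L_{p-k}\oplus L_{p-k+1}$, and neither summand is $\operatorname{gr}^pC^{k+1}=L_{p-k-1}$. So the induced differential on the associated graded is zero, not multiplication by the $G_i$. Moreover, the $\mathbf x$-degree of elements of $C^k$ is unbounded, so no $\mathbf x$-degree filtration is finite in a fixed cohomological degree.

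\textbf{The fix.} Use the increasing filtration
\[
F_pC^k=\{\xi\in C^k:\ \deg_{\mathbf x}\xi\le p+k\}.
\]
Then $t_i\partial_{t_i}+\gamma_i$ maps $F_pC^k$ into $F_{p-1}C^{k+1}$, while the $G_i$ map it into $F_pC^{k+1}$. Hence $\operatorname{gr}_p$ is the $\mathbf x$-degree $p+\bullet$ strand of the Koszul complex of $G_1,\dots,G_n$, and that Koszul complex is the direct sum of these strands. The filtration is exhaustive, with $F_p=0$ for $p<-n$. The sequences $0\to F_{p-1}\to F_p\to\operatorname{gr}_p\to 0$ give $H^k(F_p)=0$ for $k<n$ by induction on $p$. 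Then $H^k(C^\bullet_\gamma(L))=\varinjlim_pH^k(F_p)=0$, and no finiteness is needed.

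\textbf{Step 3.} This step is fine. $L$ is a Cohen--Macaulay domain of dimension $n+N$. The point your rank claim needs is that every face $\tau$ of $\delta$ is spanned by the $\mathbf w_j$ it contains, since $\delta$ is generated by the $\mathbf w_j$. Your orbit count then gives $\dim V(G_1,\dots,G_n)=N$, i.e.\ $\operatorname{ht}(G_1,\dots,G_n)=n$. The Koszul homology therefore vanishes in positive degrees, as one checks by localizing at primes containing the ideal; homogeneity is not actually needed for this.

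\textbf{Comparison with the paper.} The paper obtains $H_k(L;\{E_{i,\gamma}\})=0$ for $k>0$ in three stages:
\begin{enumerate}
\item it homogenizes to the pointed matrix $\widetilde A$;
\item it quotes the Matusevich--Miller--Walther vanishing of Euler--Koszul homology for the Cohen--Macaulay toric module $\widetilde S$, after descending to a subfield of $\mathbb C$;
\item it dehomogenizes through Claims 1--3: inverting $z$, substituting $y_j=zx_j$, and eliminating the extra operator $T$ via its cone and cokernel.
\end{enumerate}
Your argument replaces all of this with a direct proof whose only outside input is Hochster's theorem. The $\mathbf x$-degree filtration removes $\gamma$ entirely, which shows transparently why no condition on $\gamma$ is needed. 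Because $\mathbf x$-degrees are automatically non-negative, you need neither pointedness, nor homogenization, nor the passage to $\mathbb C$. The paper's route is shorter on the page and places the statement within the toric-module framework of GKZ theory. The price is reliance on an external theorem plus the dehomogenization bookkeeping.
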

\begin{proof}
By $\c D^\dagger(\infty_N)$-affinity, it suffices to consider the complex of global sections $C^\bullet_\gamma(L^\dagger)$.
Recall the following algebraic counterparts defined in the proof of \Cref{CL}:
$$D=K[\bf{x}]\langle\partial_1,\ldots, \partial_N\rangle, \ \ 
  S=K[\bf{t}^{\bf{v}}: \bf{v}\in \delta\cap\b Z^n],\ \  L=S[x_1,\ldots, x_N].
$$
Replacing $L^\dagger$ by $L$ in $C_\gamma^\bullet(L^\dagger)$, we obtain the algebraic logarithmic de Rham complex $C_\gamma^\bullet(L)$. By \cite[Section 5.9]{padictorus}, we have
$$D^\dagger\otimes_D C_\gamma^\bullet(L)\cong C_\gamma^\bullet(L^\dagger),$$
where $D^\dagger=\Gamma(\s P^N, \c D_{\s P^N,\b Q}^\dagger(\infty_N))$. Denote by $K_\bullet(L;\{E_{i,\gamma}\}_{i=1}^n)$ the homological Koszul complex of the commuting operators $E_{1,\gamma},\ldots,E_{n,\gamma}$ on $L$. We have 
$$H^k\big(C_\gamma^\bullet(L)\big)=H_{n-k}\big(L; \{E_{i,\gamma}\}_{i=1}^n\big). $$ Thus it suffices to prove that $D^\dagger$ is flat as a right $D$-module and that 
$$H_{k}\big(L; \{E_{i,\gamma}\}_{i=1}^n\big)=0 \text{ for }k>0.$$

First, let 
$$\widetilde A=\begin{pmatrix} 1& 1 & \ldots & 1 \\ 0 & \bf{w}_1&\ldots& \bf{w}_N \end{pmatrix}.$$
This matrix is pointed and has rank $n+1$.
Denote its column vectors by $\widetilde{\bf{w}}_0, \widetilde{\bf{w}}_1,\ldots, \widetilde{\bf{w}}_N\in \b Z^{n+1}$, and let $\widetilde\delta$ be the cone in $\b R^{n+1}$ generated by those columns. Put
$$\widetilde S=K[z^{v_0}\bf{t}^{\bf{v}}: (v_0,\bf{v})\in \widetilde\delta\cap \b Z^{n+1}], \ \widetilde L=\widetilde S[x_0,x_1,\ldots, x_N],\ \widetilde F=x_0z+\sum_{j=1}^Nx_jz\bf{t}^{\bf{w}_j}\in \widetilde L.$$
Here we regard $z$ as an additional variable adjoined to $\mathbf{t}$. For $\widetilde\gamma=(\gamma_0,\gamma_1,\ldots,\gamma_n)\in K^{n+1}$, define
\begin{eqnarray*}
\widetilde E_{i,\widetilde\gamma}&=&t_i\partial_{t_i}+\gamma_i+\pi\sum_{j=1}^Nw_{ij}x_j z \bf{t}^{\bf{w_j}}, \text{ for } i=1,2,\ldots, n,\\
\widetilde E_{z,\widetilde\gamma}&=&z\partial_z+\gamma_0+\pi \widetilde F.
\end{eqnarray*}
Denote their homological Koszul complex on $\widetilde L$ by $K_\bullet\big(\widetilde L; \{\widetilde E_{z,\widetilde\gamma}, \widetilde E_{1,\widetilde\gamma},\ldots, \widetilde E_{n,\widetilde\gamma}\}\big)$. 
The normal affine semigroup ring $\widetilde S$ is Cohen–Macaulay of dimension $n+1$ by \cite{Hochster72}. Moreover, it is finite over 
$$S'=K[\b Z_{\ge 0}\widetilde{\bf{w}}_0+\cdots+\b Z_{\ge 0}\widetilde{\bf{w}}_N].$$
Hence it is a toric module with respect to $\widetilde A$ by \cite[Definition 4.5 and Example 4.7]{toricmodule}. Therefore \cite[Remark 6.4]{toricmodule} gives
$$H_k\big(\widetilde L; \{\widetilde E_{z,\widetilde\gamma}, \widetilde E_{1,\widetilde\gamma},\ldots, \widetilde E_{n,\widetilde\gamma}\}\big)=0\text{ for }k>0.$$
The cited result over $\mathbb C$ applies here by descending to
$\b Q(\pi,\gamma_0,\ldots,\gamma_n)$, which can be embedded in $\b C$.

\textbf{Claim 1.} We have $\widetilde S[z^{-1}]=S[z^{\pm1}]$.

The inclusion $\widetilde S[z^{-1}]\subset S[z^{\pm1}]$ follows by projecting $\widetilde\delta$ onto $\delta$. Conversely, let $\mathbf v\in\delta\cap\mathbb Z^n$. Write
\[ \mathbf v=\sum_{j=1}^N a_j\mathbf w_j, \qquad a_j\geq0,\]
and choose an integer $h\geq\sum_j a_j$. Then
\[ (h,\mathbf v) = \left(h-\sum_{j=1}^N a_j\right)(1,\mathbf0) +\sum_{j=1}^N a_j(1,\mathbf w_j) \in\widetilde\delta\cap\mathbb Z^{n+1}. \]
Thus $z^h\mathbf t^{\mathbf v}\in\widetilde S$, and hence
$\mathbf t^{\mathbf v}\in\widetilde S[z^{-1}]$. This proves the claim.

Put $\widetilde\gamma_0=(0,\gamma)$, and let $ K_\bullet= K_\bullet\bigl( \widetilde L[z^{-1}]; \widetilde E_{z,\widetilde\gamma_0}, \widetilde E_{1,\widetilde\gamma_0},\ldots, \widetilde E_{n,\widetilde\gamma_0} \bigr)$.

\textbf{Claim 2.} We have $H_k(K_\bullet)=0$ for $k>0$.

For each $m\geq0$, the operators defining $K_\bullet$ preserve
$z^{-m}\widetilde L$. Moreover,
\[ \widetilde E_{z,(0,\gamma)}(z^{-m}f) = z^{-m}\widetilde E_{z,(-m,\gamma)}(f),
\qquad \widetilde E_{i,(0,\gamma)}(z^{-m}f) = z^{-m}\widetilde E_{i,(-m,\gamma)}(f). \]
Consequently, multiplication by $z^{-m}$ identifies the subcomplex with coefficients in $z^{-m}\widetilde L$ with the Koszul complex on $\widetilde L$ with parameter $(-m,\gamma)$. Its homology vanishes in positive degrees by the preceding result. Since
\[ \widetilde L[z^{-1}] =\varinjlim_{m\geq0}z^{-m}\widetilde L, \]
the claim follows from the exactness of filtered direct limits.

We now make the change of variables $y_j=zx_j, (0\leq j\leq N)$. 
By Claim 1, \[ \widetilde L[z^{-1}] =S[z^{\pm1},y_0,y_1,\ldots,y_N]. \]
Under this identification, the operators $\widetilde E_{i,(0,\gamma)}$ ($1\leq i\leq n$) become
\[ E_{i,\gamma}(\mathbf y) = t_i\partial_{t_i}+\gamma_i +\pi\sum_{j=1}^N w_{ij}y_j\mathbf t^{\mathbf w_j}, \qquad 1\leq i\leq n, \]
where $\bf{y}=(y_1,\ldots,y_N)$ and the remaining operator becomes
\[T=y_0\partial_{y_0}+\pi y_0+B, \qquad B=z\partial_z+\sum_{j=1}^N y_j\partial_{y_j} +\pi\sum_{j=1}^N y_j\mathbf t^{\mathbf w_j}. \]
Put \[ Q_\bullet = K_\bullet\bigl(S[\mathbf y]; \{E_{i,\gamma}(\mathbf y)\}_{i=1}^n\bigr) \otimes_K K[z^{\pm1}]. \]
The operator $B$ commutes with each $E_{i,\gamma}(\mathbf y)$. Hence $T$ defines an endomorphism of $Q_\bullet[y_0]$.

\textbf{Claim 3.} The map
\[ T:Q_\bullet[y_0]\longrightarrow Q_\bullet[y_0] \]
is degreewise injective, and
\[ \operatorname{Cone}(T)\cong K_\bullet, \qquad \operatorname{Coker}(T)\cong Q_\bullet. \]
In particular, $K_\bullet\simeq Q_\bullet$ in $D(K)$.

For $\eta\in Q_k$ and $m\geq0$, we have
\[ T(y_0^m\eta) = \pi y_0^{m+1}\eta+y_0^m(m+B)\eta.\]
Thus, for every nonzero $f\in Q_k[y_0]$, $\deg_{y_0}(Tf)=\deg_{y_0}(f)+1$, which proves injectivity. In the cokernel, the same equality gives
\[ [y_0^{m+1}\eta] = -\pi^{-1}[y_0^m(m+B)\eta]. \]
Repeatedly applying this relation, every class has a representative in $Q_k$. This representative is unique, since a nonzero element in the image of $T$ has positive $y_0$-degree. Therefore the natural inclusion induces an isomorphism
\[ Q_k\xrightarrow{\sim}Q_k[y_0]/T(Q_k[y_0]). \]
These isomorphisms commute with the differentials, giving $ Q_\bullet\xrightarrow{\sim}\operatorname{Coker}(T)$.

Adding the Koszul operator $T$ identifies $K_\bullet$ with
$\operatorname{Cone}(T)$; see the construction in the proof of \cite[Proposition 2.4.3]{FuAG2006}. Since $T$ is degreewise injective, the canonical projection
\[ \operatorname{Cone}(T)\longrightarrow\operatorname{Coker}(T) \]
is a quasi-isomorphism. This proves Claim 3.

Combining Claims 2 and 3, and renaming $\mathbf y$ as $\mathbf x$, we obtain
\[ 0=H_k(K_\bullet)\cong H_k(Q_\bullet)\cong H_k\bigl(L;\{E_{i,\gamma}\}_{i=1}^n\bigr) \otimes_K K[z^{\pm1}] \qquad(k>0).\]
Since $K[z^{\pm1}]$ is faithfully flat over $K$, this proves
\[ H_k\bigl(L;\{E_{i,\gamma}\}_{i=1}^n\bigr)=0 \qquad(k>0). \]

Finally, we prove that $D^\dagger$ is flat as a right $D$-module. By definition, \[\c D_{\s P^N,\b Q}^\dagger(\infty_N)=\varinjlim_m\b{Q}\otimes_{\b{Z}}\widehat{\c{B}}^{(m)}(\infty_N)\,\widehat{\otimes}\,\widehat{\c{D}}^{(m)}_{\s P^N}.\]
Then by the discussion in \cite[\S4.3, p.~33]{NootHuyghe2007}, we have
$D^\dagger=\varinjlim_m \b{Q}\otimes_{\b{Z}}(B^{(m)}\otimes D^{(m)})^{\wedge}$, where $B^{(m)}\otimes D^{(m)}$ are the global sections of $\c{B}^{(m)}(\infty_N)\otimes\c{D}^{(m)}_{\s P^N}$.
Note that $\b{Q}\otimes_{\b{Z}}(B^{(m)}\otimes D^{(m)})$ is just our $D$ for any $m\geq 0$. We obtain that $\b{Q}\otimes_{\b{Z}}(B^{(m)}\otimes D^{(m)})^{\wedge}$ is flat over $D$ for any $m\geq 0$ since completion is flat. It follows that $D^\dagger$ is flat over $D$.
\end{proof}

Suppose that $\Sigma_0$ is a subgroup of $\mathbb Z_p/\mathbb Z$ which does not contain $p$-adic Liouville numbers.
Let $D^b_{q\text{-}\Sigma_0} \bigl(\mathcal D^\dagger_{\mathscr X,\mathbb Q}({}^\dagger T)\bigr)$
be Caro's category of quasi $\Sigma_0$-unipotent complexes localized along
$T$; see \cite[Definition~3.3.1 and \S3.3.5]{CaroUnipotentmonodromy}.
By \cite[Theorem~1.4.15 and Remark~1.3.3]{CaroUnipotentmonodromy},
this is a triangulated subcategory of $D^b_{\mathrm{ovhol}}
\bigl(\mathcal D^\dagger_{\mathscr X,\mathbb Q}({}^\dagger T)\bigr)$.
Since tensor products and shifts preserve quasi $\Sigma_0$-unipotence,
the twisted tensor product induces a bifunctor
\[ -\widetilde\otimes_{(\mathscr{X},T)}- :
D^b_{q\text{-}\Sigma_0}\bigl(\mathcal D^\dagger_{\mathscr{X},\mathbb Q}({}^\dagger T)\bigr)
\times D^b_{q\text{-}\Sigma_0} \bigl(\mathcal D^\dagger_{\mathscr{X},\mathbb Q}({}^\dagger T)\bigr)
\longrightarrow D^b_{q\text{-}\Sigma_0} \bigl( \mathcal D^\dagger_{\mathscr{X},\mathbb Q}({}^\dagger T) \bigr). \]
The corresponding categories over couples admit Grothendieck's six operations
as in \cite[\S2.2.6]{CaroUnipotentmonodromy},
with direct images taken along complete morphisms of couples.

\begin{proof}[Proof of \Cref{complex}(i)]
   The concentration of $C_\gamma^\bullet(\c L^\dagger)$ in degree $n$
follows from \Cref{Hk=0}.
It remains to prove overholonomicity under the additional assumption
that $\Sigma_\gamma$ contains no $p$-adic Liouville numbers,
which we impose for the remainder of the proof.
 
    By \Cref{CL}, the proof of \Cref{Hypisodagger} and \cite[Lemma 5.3]{padictorus}, we have \[C_\gamma^{\bullet}(\c L^\dagger)\cong R\rho'_{2,\ast}\m{sp}_\ast\Big((j'\times j_N)^\dagger \Omega^\bullet_{X_K(\Sigma')^{\m{an}}\times \b P^{N,{\m{an}}}_K/\b P^{N,{\m{an}}}_K}\big(\m{log} (D'\times \b P^N)\big) \otimes \rho_1'^\ast \widetilde{\c{K}}_\gamma\otimes \widetilde{\c L}_{\pi,\overline F}\Big).\]
    Let $D'':=X_{\kappa}(\Sigma')-X_{\delta,\kappa}'$, let $B:=(D''\times\b{P}^N_{\kappa})\cup(X_{\kappa}(\Sigma')\times\infty_N)$, and let $\overline{D'}$ be the closure of $D'$ in $X_{\kappa}(\Sigma')$. 
    We have the following canonical quasi-isomorphism
    \begin{equation}\begin{aligned}\label{DRquasiiso}
        & \,\m{sp}_\ast\Big((j'\times j_N)^\dagger \Omega^\bullet_{X_K(\Sigma')^{\m{an}}\times \b P^{N,{\m{an}}}_K/\b P^{N,{\m{an}}}_K}\big(\m{log} (D'\times \b P^N)\big) \otimes \rho_1'^\ast \widetilde{\c{K}}_\gamma\otimes \widetilde{\c L}_{\pi,\overline F}\Big)
        \\ \cong & \,\Omega^\bullet_{\s{X}(\Sigma')\times\s{P}^N/\s{P}^N,\b{Q}} \otimes_{\c{O}_{\s{X}(\Sigma')\times\s{P}^N,\b{Q}}} u_{B,+}\big(\m{sp}_\ast\rho_1'^\ast \widetilde{\c{K}}_\gamma\big)\otimes \m{sp}_\ast\widetilde{\c L}_{\pi,\overline F}
    \end{aligned}\end{equation}
    by \cite[Proposition 2.2.11]{CaroTsuzuki12}, where $u:(\s{X}(\Sigma')\times\s{P}^N,\overline{D'}\times \b P^N)\to \s{X}(\Sigma')\times\s{P}^N$ is the morphism forgetting the log-structure, $u_{B,+}$ is defined in \cite[5.22]{Carologisocrystal}.
    Then we consider another log-structure and the morphism forgetting the log-structure \[v:(\s{X}(\Sigma')\times\s{P}^N,H_{\Sigma'}\times \b P^N)\to \s{X}(\Sigma')\times\s{P}^N.\]
    Define the global Kummer log-isocrystal by \[\widetilde{\c K}_\gamma^{\log}:=\left(\mathcal O_{\s{X}(\Sigma')\times\s{P}^N,\mathbb Q}\cdot e,\,\nabla e=\sum_{i=1}^n\gamma_i\frac{dt_i}{t_i}\cdot e\right).\]
    We obtain $(^\dagger B)\widetilde{\c K}_\gamma^{\log}\cong \m{sp}_\ast\rho_1'^\ast \widetilde{\c{K}}_\gamma$.
    It follows that \[u_{B,+}\big(\m{sp}_\ast\rho_1'^\ast \widetilde{\c{K}}_\gamma\big)\cong v_{B,+}\big((^\dagger B)\widetilde{\c K}_\gamma^{\log}\big)\cong (^\dagger B)v_+(\widetilde{\c K}_\gamma^{\log})\] by \cite[Lemma 2.2.2]{CaroTsuzuki12}.
    One checks that all the exponents of $\widetilde{\c K}_\gamma^{\log}$ lie in $\overline\Sigma_{\gamma}$, where $\overline\Sigma_{\gamma}:=(\Sigma_{\gamma}+\b{Z})/\b{Z}$. So we have $v_+(\widetilde{\c K}_\gamma^{\log})\in D_{\m{ovhol}}^b(\c{D}_{\s{X}(\Sigma')\times\s{P}^N,\b{Q}}^\dagger)$ by \cite[Theorem 2.3.13]{CaroTsuzuki12}. Moreover, since $\s{X}(\Sigma')$ has the form $\s{A}^r\times\s{T}^{n-r}$ locally, by \cite[Proposition 1.1.22]{CaroTsuzuki12} and the same argument of the proof of \cite[Proposition 2.3.4]{CaroTsuzuki12}, one checks that $v_+(\widetilde{\c K}_\gamma^{\log})\in D_{q\text{-}\overline\Sigma_{\gamma}}^b(\c{D}_{\s{X}(\Sigma')\times\s{P}^N,\b{Q}}^\dagger)$.
    Note that $\m{sp}_\ast\widetilde{\c L}_{\pi,\overline F}$ is also quasi $\overline\Sigma_{\gamma}$-unipotent since it is an overconvergent $F$-isocrystal. We obtain that $u_{B,+}\big(\m{sp}_\ast\rho_1'^\ast \widetilde{\c{K}}_\gamma\big)\otimes \m{sp}_\ast\widetilde{\c L}_{\pi,\overline F}$ is quasi $\overline\Sigma_{\gamma}$-unipotent since this property is preserved by tensor product and $(^\dagger B)$.
    Then applying $R\rho_{2,*}'$ to the quasi-isomorphism \ref{DRquasiiso}, we obtain the following quasi-isomorphism \[C_\gamma^{\bullet}(\c L^\dagger)\cong \rho''_{2,+}(u_{B,+}\big(\m{sp}_\ast\rho_1'^\ast \widetilde{\c{K}}_\gamma\big)\otimes \m{sp}_\ast\widetilde{\c L}_{\pi,\overline F})[-n]\] in the category $D_{\m{coh}}^b(\c{D}_{\s{P}^N,\b{Q}}^\dagger(\infty_N))$, where $\rho_2''$ is the canonical projection $\s{X}(\Sigma')\times\s{P}^N\to \s{P}^N$, and we have $R\rho_{2,*}'=R\rho_{2,*}''$. 
    In summary, $C_\gamma^{\bullet}(\c L^\dagger)$ is quasi $\overline\Sigma_{\gamma}$-unipotent, hence overholonomic.
\end{proof}

\subsection{The Frobenius structure on $H^n(C_{\gamma}^{\bullet}(L^\dagger))$}

Recall that $\delta=\{\bf{v}\,|\,H_1(\bf{v})\geq 0, \cdots,H_s(\bf{v})\geq 0\}$. We define $\delta(\bf{d})=\{\bf{v}\,|\,H_1(\bf{v})\geq -d_1, \cdots,H_s(\bf{v})\geq -d_s\}$ for any $\bf{d}=(d_1,\cdots,d_s)\in \b{Z}^s$. Let \[L^{\dagger}(\bf{d}):=\Big\{\sum_{\mathbf{u}\in\b{Z}^N_{\geq 0},\mathbf{v}\in \delta(\bf{d})\cap \b Z^n}a_{\mathbf{u},\mathbf{v}}\mathbf{x}^{\mathbf{u}}\bf t^{\mathbf{v}}\,\Big|\,a_{\mathbf{u},\mathbf{v}}\in K,\,\exists(c>0,0<\eta<1),\|a_{\mathbf{u},\mathbf{v}}\|\leq c\eta^{|\mathbf{u}|+|\mathbf{v}|}\Big\}.\] One verifies that the differential operators $E_{i,\gamma}$ are well defined on $L^{\dagger}(\bf{d})$. We thus define the complex $C_\gamma^{\bullet}(L^\dagger(\bf{d}))$ by the same construction in \Cref{C}.

\begin{prop}\label{shiftdcoh}
    $L^{\dagger}(\bf{d})$ is a coherent $\Gamma(\s P^N, \mathcal D^\dagger_{\mathscr{P}^N, \mathbb Q}(\infty_N))$-module. Hence it defines a coherent $\c D^\dagger_{\s P^N,\b Q}(\infty_N)$-module, which we denote by $\c L^\dagger(\bf{d})$.
    Moreover, if $\Sigma_{\gamma}$ does not have $p$-adic Liouville numbers, then $C_\gamma^{\bullet}(\c L^\dagger(\bf{d}))$ is an overholonomic complex.
\end{prop}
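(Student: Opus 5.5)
Two routes suggest themselves. One reduces $L^\dagger(\mathbf d)$ to the already-treated case $L^\dagger$ by a lattice shift. The other reruns, for $\delta(\mathbf d)$, the geometric argument used for $L^\dagger$ in \Cref{Ldagger}, \Cref{CL} and the proof of \Cref{complex}(i). I would try the reduction first, since $\delta(\mathbf d)\cap\mathbb Z^n$ is a finitely generated module over the semigroup $\delta\cap\mathbb Z^n$.

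\emph{Coherence.} Write the algebraic counterpart $L(\mathbf d)=S(\mathbf d)[\mathbf x]$, where $S(\mathbf d)=K[\mathbf t^{\mathbf v}:\mathbf v\in\delta(\mathbf d)\cap\mathbb Z^n]$. This is a finitely generated $S$-module: $\delta(\mathbf d)$ is a translate-type polyhedron with recession cone $\delta$, so its lattice points are $\delta\cap\mathbb Z^n$ plus finitely many lattice points in a bounded region. Give $L(\mathbf d)$ the $D$-module structure in which $x_j$ acts by multiplication and $\partial_j$ acts by $\partial_{x_j}+\pi\mathbf t^{\mathbf w_j}$. Choose generators $\mathbf t^{\mathbf v_1},\dots,\mathbf t^{\mathbf v_r}$. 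The $D$-module generated by each $\mathbf t^{\mathbf v_l}$ is a quotient of a copy of $L$, because $\partial_j-\pi\mathbf t^{\mathbf w_j}$ and multiplication by $x_j$ reproduce the $S[\mathbf x]$-action. So $L(\mathbf d)$ is a finitely generated $D$-module, and since $D$ is noetherian it is coherent. The same lattice bookkeeping as in \cite[Section 3.7, 5.9]{padictorus} then gives a compatible finite presentation of $L^\dagger(\mathbf d)$ over $D^\dagger$ with $L^\dagger(\mathbf d)\cong D^\dagger\otimes_D L(\mathbf d)$. This presentation is the analytic overconvergence estimate: a general element is written as $\sum_l P_l\,\mathbf t^{\mathbf v_l}$ with $P_l\in D^\dagger$, using growth bounds as in the proof of \Cref{inverse}. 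Coherence over $D^\dagger$ follows, and $\mathcal D^\dagger(\infty_N)$-affinity \cite[5.3.3]{Ddaggeraffine} produces $\mathcal L^\dagger(\mathbf d)$.

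\emph{Overholonomicity.} I expect this to be the main obstacle, and here I would follow the proof of \Cref{complex}(i) with a twisted line bundle. The support condition $H_b(\mathbf v)\ge -d_b$ corresponds on the toric resolution $X'_\delta$ to allowing poles of order $d_b$ along the boundary divisor attached to the facet $H_b$. Concretely, let $\mathcal O(\mathbf d)$ be the invertible sheaf $\mathcal O_{X'_\delta}(\sum_b d_bD_b)$, where $D_b$ are the components of $D'$ corresponding to the facets, extended suitably over $X(\Sigma')$. The proof of \Cref{Ldagger}, with \cite[Lemma 5.3]{padictorus} applied to $\mathcal O(\mathbf d)$, should then show that $L^\dagger(\mathbf d)$ is the global sections of $(j'\times j_N)^\dagger\mathcal O(\mathbf d)$. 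In the same way, $C^\bullet_\gamma(\mathcal L^\dagger(\mathbf d))$ is $R\rho'_{2,*}\mathrm{sp}_*$ of the log de Rham complex twisted by $\mathcal O(\mathbf d)\otimes\rho_1'^*\widetilde{\mathcal K}_\gamma\otimes\widetilde{\mathcal L}_{\pi,\overline F}$.

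Locally, $\mathcal O(\mathbf d)$ with its induced log connection is just $\widetilde{\mathcal K}^{\log}$ with exponents shifted by the integers $d_b$ along $D_b$. So the global log-isocrystal $\widetilde{\mathcal K}^{\log}_\gamma\otimes\mathcal O(\mathbf d)$ still has all its exponents in $\overline\Sigma_\gamma$. The argument of \Cref{complex}(i) then goes through verbatim. We apply \cite[Proposition 2.2.11]{CaroTsuzuki12} to pass to $u_{B,+}$, and \cite[Lemma 2.2.2, Theorem 2.3.13]{CaroTsuzuki12} to get quasi $\overline\Sigma_\gamma$-unipotence of $v_+$. Tensoring with the Dwork $F$-isocrystal and applying the proper pushforward $\rho''_{2,+}$ preserves quasi $\overline\Sigma_\gamma$-unipotence, hence overholonomicity.

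The delicate points are checking the log-structure compatibility of $\mathcal O(\mathbf d)$ on the noncompact part $D''$, and verifying that the cohomology vanishing used in the proof of \Cref{Hypisodagger} still holds for the twisted sheaf. The Tate acyclicity argument there works on affinoids, so it should go through unchanged.
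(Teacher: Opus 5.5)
Your coherence argument is essentially the paper's: $S(\mathbf d)$ is finite over $S$, and $L^\dagger(\mathbf d)=\sum_\alpha\mathbf t^{\mathbf v_\alpha}L^\dagger\cong L^\dagger\otimes_S S(\mathbf d)$ is a $D^\dagger$-linear quotient of $(L^\dagger)^{\oplus r}$, because multiplication by $\mathbf t^{\mathbf v}$ commutes with $x_j$ and with $\partial_j$ acting as $\partial_{x_j}+\pi\mathbf t^{\mathbf w_j}$.

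The overholonomicity part has a genuine gap, at the step you treat as routine: realizing $L^\dagger(\mathbf d)$ and $C^\bullet_\gamma(\mathcal L^\dagger(\mathbf d))$ as (derived) sections of a twisted sheaf over the tube of $X'_\delta\times\mathbb A^N$.

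First, $\mathcal O_{X'_\delta}(\sum_b d_bD_b)$, with coefficient $0$ on the other components of $D'$, has the wrong sections. Those other components are the exceptional divisors of $X'_\delta\to X_\delta$, one for each ray $u$ of $\Sigma'$ in the interior of $\delta^\vee$. Each of them imposes the extra condition $\langle\mathbf v,u\rangle\ge 0$. For example, take $\mathbf w_1=(1,0)$ and $\mathbf w_2=(1,2)$, so $H_1=v_2$ and $H_2=2v_1-v_2$. Every regular refinement contains an interior ray $u$ with $\langle\mathbf v,u\rangle=aH_1(\mathbf v)+bH_2(\mathbf v)$ for some $a,b>0$. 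For $\mathbf d=(1,1)$ the monomial $\mathbf t^{(-1,-1)}\in S(\mathbf d)$ violates this condition. So the exceptional coefficients have to be read off from the support function of $\delta(\mathbf d)$.

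Second, and more seriously, both identifications need the higher cohomology of the twisted sheaf to vanish over strict neighbourhoods of $]X'_\delta\times\mathbb A^N[$. That is, you need an analogue of \cite[Lemma 5.3]{padictorus} for a non-trivial line bundle. Since $X'_\delta$ is not affine, this is a Demazure-type vanishing. It holds for relatively nef torus-invariant divisors but not in general. You would have to refine $\Sigma'$ along the normal fan of $\delta(\mathbf d)$, which depends on $\mathbf d$ and need not be a lattice polyhedron, and then prove the vanishing in the dagger setting. The Tate acyclicity in \Cref{Hypisodagger} does not cover this. It concerns the tube of $\mathbb T^n\times\mathbb A^N$, whose strict neighbourhoods $T_\rho\times U_\lambda$ are affinoid.

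None of this is needed: the lattice-shift reduction you used for coherence also gives overholonomicity, and that is the paper's proof.

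1. Multiplication by $\mathbf t^{\mathbf v}$ intertwines $E_{i,\gamma+\mathbf v}$ on $L^\dagger$ with $E_{i,\gamma}$ on $L^\dagger(\mathbf d)$.
2. Take a $\mathbb Z^n$-graded free resolution of $S(\mathbf d)$ over $S$ by finite sums of modules $\mathbf t^{\mathbf v}S$, and tensor it with $L^\dagger$. It stays exact because $L^\dagger\cong D^\dagger\otimes_D K[\mathbf x]\otimes_K S$ and $D^\dagger$ is flat over $D$, as shown in \Cref{Hk=0}.
3. This gives a resolution of $C^\bullet_\gamma(\mathcal L^\dagger(\mathbf d))$ whose terms are finite sums of $C^\bullet_{\gamma+\mathbf v}(\mathcal L^\dagger)$.
4. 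Each term is overholonomic by \Cref{complex}(i), since $\Sigma_{\gamma+\mathbf v}\subset\Sigma_\gamma+\mathbb Z$ and being Liouville is invariant under integer translation.
5. $C^\bullet_\gamma(\mathcal L^\dagger(\mathbf d))$ is bounded, all complexes involved have amplitude in $[0,n]$, and overholonomic modules form a Serre subcategory. The spectral sequence of the resulting double complex then finishes the proof.
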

\begin{proof}
    Let $S=K[\bf{t}^{\bf{v}}:\bf{v}\in\delta\cap\b{Z}^n]$. It is a Noetherian ring. Let $S(\bf{d}):=\oplus_{\bf{v}\in\delta(\bf{d})\cap\b{Z}^n} K\bf{t}^{\bf{v}}$ be an $S$-module. Choose $\bf{u}\in\b{Z}^n$ such that $H_b(\bf{u})\geq d_b$ for any $b=1,\ldots,s$. Then $\bf{t}^{\bf{u}}S(\bf{d})$ is an ideal of $S$. Consequently, $\bf{t}^{\bf{u}}S(\bf{d})$, and hence $S(\bf{d})$, are finitely generated as $S$-modules.
    Then choose $\bf{v}_1,\ldots,\bf{v}_r$ such that $S(\bf{d})=\sum_{\alpha=1}^r\bf{t}^{\bf{v}_{\alpha}}S$. One checks that $L^\dagger(\bf{d})=\sum_{\alpha=1}^r\bf{t}^{\bf{v}_{\alpha}}L^\dagger$. It follows that $L^\dagger(\bf{d})$ can be regarded as a quotient of $(L^{\dagger})^{\oplus r}$. So it is coherent.

    To show $C_\gamma^{\bullet}(\c L^\dagger(\bf{d}))$ is an overholonomic complex, we describe the argument above more precisely.
    Recall the rings $D,D^{\dagger}$ defined in the proof of \Cref{CL}. We have $L^\dagger\cong D^{\dagger}\otimes_DK[\bf{x}]\otimes_KS$. Hence $L^\dagger(\bf{d})\cong L^\dagger\otimes_SS(\bf{d})$. Also note that $D^{\dagger}$ is flat over $D$, which is proved in \Cref{Hk=0}. We obtain that $L^\dagger$ is flat over $S$.
    Consider the surjective morphism 
    \[\oplus_{\alpha=1}^rL^{\dagger}e_{\alpha}\to L^\dagger(\bf{d}),\quad e_{\alpha}\mapsto \bf{t}^{\bf{v}_{\alpha}}.\]
    Note that $t_i\partial_{t_i}(\bf{t}^{\bf{v}_{\alpha}})=v_{{\alpha},i}\bf{t}^{\bf{v}_{\alpha}}$. For any $\gamma\in\b{Z}_p^n$, it induces a surjective morphism \[\oplus_{\alpha=1}^rC_{\gamma+\bf{v}_{\alpha}}^{\bullet}(L^{\dagger})e_{\alpha}\to C_\gamma^{\bullet}(L^\dagger(\bf{d}))\] between coherent $\Gamma(\s P^N, \mathcal D^\dagger_{\mathscr{P}^N, \mathbb Q}(\infty_N))$-complexes.
    Hence we obtain a surjective morphism \[\oplus_{\alpha=1}^rC_{\gamma+\bf{v}_{\alpha}}^{\bullet}(\c L^{\dagger})e_{\alpha}\to C_\gamma^{\bullet}(\c L^\dagger(\bf{d}))\] between coherent $\mathcal D^\dagger_{\mathscr{P}^N, \mathbb Q}(\infty_N)$-complexes.
    Then consider a free of finite rank resolution for the $S$-module $S(\bf{d})$.
    Applying $L^\dagger\otimes_S-$, by the same argument, we can obtain a resolution of $C_\gamma^{\bullet}(\c L^\dagger(\bf{d}))$ (not necessarily of finite length) all of whose terms are finite direct sums of $C_{\gamma+\bf{v}}^{\bullet}(\c L^{\dagger})$ for some $\bf{v}\in \b{Z}^n$.
    By our assumption, $\Sigma_{\gamma}$ does not have $p$-adic Liouville numbers. It follows that $C_{\gamma+\bf{v}}^{\bullet}(\c L^{\dagger})\in D_{\m{ovhol}}^b(\mathcal D^\dagger_{\mathscr{P}^N, \mathbb Q}(\infty_N))$ by \Cref{complex}(i).
    Note that $C_\gamma^{\bullet}(\c L^\dagger(\bf{d}))$ is bounded by definition and the category of overholonomic objects is a Serre subcategory. We have $C_{\gamma}^{\bullet}(\c L^{\dagger}(\bf{d}))\in D_{\m{ovhol}}^b(\mathcal D^\dagger_{\mathscr{P}^N, \mathbb Q}(\infty_N))$ by the canonical spectral sequence argument.
\end{proof}

\begin{prop}\label{shiftbyfacet}
    Fix $1\leq b\leq s$. Let $\bf{1}_b\in\b{Z}^s$ be the vector with $1$ in the $b$-th coordinate and $0$ otherwise. For any $\bf{d}=(d_1,\cdots,d_s)\in \b{Z}^s$, the canonical inclusion $C_\gamma^{\bullet}(L^\dagger(\bf{d}))\to C_\gamma^{\bullet}(L^\dagger(\bf{d}+\bf{1}_b))$ is a quasi-isomorphism if $H_b(\gamma)\neq d_b+1$.

    Assume that $H_b(\gamma)= d_b+1$ and $\Sigma_{\gamma}$ does not have $p$-adic Liouville numbers. Let $i_{\bf{a}}:\{\bf{a}\}\to\s{P}^N$ be a closed point, and let $K_{\bf{a}}$ be the field of $\bf{a}$. We still have $\chi_{\bf{a}}\big(C_\gamma^{\bullet}(\c L^\dagger(\bf{d}))\big)=\chi_{\bf{a}}\big(C_\gamma^{\bullet}(\c L^\dagger(\bf{d}+\bf{1}_b))\big)$, where \[\chi_{\bf{a}}\big(\c{E}^\bullet\big):=\sum(-1)^k\dim_{K_{\bf{a}}}H^k\big(i_{\bf{a}}^!\c{E}^{\bullet}\big).\] It is well defined for overholonomic complexes.
\end{prop}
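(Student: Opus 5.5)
The plan is to mimic the proof of \Cref{comp}. The cokernel of the inclusion $L^\dagger(\bf{d})\hookrightarrow L^\dagger(\bf{d}+\bf{1}_b)$ is the space $Q$ of overconvergent series supported on the lattice points $\bf{v}\in\delta(\bf{d}+\bf{1}_b)$ with $H_b(\bf{v})=-d_b-1$. This is a single hyperplane slice, so it is much simpler than in \Cref{comp}. The quotient complex $C_\gamma^\bullet(Q)$ has differential $\m{pr}\circ E_{i,\gamma}$, where $\m{pr}$ projects onto that slice.

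To see the differential on the slice, note that multiplication by $x_j\bf{t}^{\bf{w}_j}$ with $H_b(\bf{w}_j)>0$ moves the support into $\delta(\bf{d})$, so it is killed by $\m{pr}$. Hence only $\tilde E_{i,\gamma}=t_i\partial_{t_i}+\gamma_i+\sum_{H_b(\bf{w}_j)=0}\pi w_{ij}x_j\bf{t}^{\bf{w}_j}$ survives, and no exponential conjugation is needed. With the contraction $\iota_b$ built from the coefficients $c_i$ of $H_b$, exactly as before, we get
\[\iota_b\tilde d+\tilde d\iota_b=\sum_i c_it_i\partial_{t_i}+H_b(\gamma).\]
On the slice this operator is multiplication by the constant $H_b(\bf{v})+H_b(\gamma)=H_b(\gamma)-d_b-1$. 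If $H_b(\gamma)\neq d_b+1$, this constant is invertible, so it is a nonzero scalar that is null-homotopic. Therefore $C_\gamma^\bullet(Q)$ is acyclic and the inclusion is a quasi-isomorphism. No Liouville hypothesis is needed, since we divide by a single constant. I will also check that $\iota_b$ and the projection respect the overconvergence growth conditions, which is immediate.

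For the case $H_b(\gamma)=d_b+1$, use the short exact sequence of complexes
\[0\to C_\gamma^\bullet(L^\dagger(\bf{d}))\to C_\gamma^\bullet(L^\dagger(\bf{d}+\bf{1}_b))\to C_\gamma^\bullet(Q)\to 0,\]
which sheafifies to a distinguished triangle of coherent complexes. By \Cref{shiftdcoh}, under the non-Liouville hypothesis on $\Sigma_\gamma$ the first two terms are overholonomic, hence so is the third. Since $i_{\bf{a}}^!$ is triangulated, $\chi_{\bf{a}}$ is additive on triangles, and it suffices to show $\chi_{\bf{a}}(C_\gamma^\bullet(\c Q))=0$.

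Here $\c Q$ is the sheafified quotient. Choose $\bf{u}$ with $H_b(\bf{u})=-d_b-1$ lying in the slice, which is possible after translating. Multiplying by $\bf{t}^{-\bf{u}}$ identifies $C_\gamma^\bullet(\c Q)$ with the analogous GKZ-type complex attached to the face $\tau_b=\{H_b=0\}\cap\delta$, with parameter $\gamma+\bf{u}$. The variables $x_j$ with $H_b(\bf{w}_j)>0$ then enter only through zero action (they act through the projection trivially), so these directions contribute a factor like $K[x_j]$ with $x_j$ acting by zero on the quotient. The contraction shows that the complex is a cone of the operator $\sum_i c_it_i\partial_{t_i}+H_b(\gamma)-d_b-1$, which now acts as zero on the slice. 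More precisely, the complex splits as $\mathrm{Cone}(0)$ of a lower-dimensional complex $C'$ in the $n-1$ torus directions transverse to $H_b$, that is $C'\oplus C'[-1]$. Its Euler characteristic at any point is $\chi(C')-\chi(C')=0$. The main obstacle is making this splitting precise: I would change coordinates on $\b{Z}^n$ so that $H_b$ becomes the first coordinate, where $\tilde E_{1}$ reduces to the constant $H_b(\gamma)-d_b-1=0$ on the slice. The Koszul complex then factors as $K(0)\otimes K(\tilde E_2,\dots,\tilde E_n)$. Compatibility of this decomposition with the $\c D^\dagger$-structure and with $i_{\bf{a}}^!$ follows because it is a tensor decomposition of complexes of $\c D^\dagger$-modules, and $i_{\bf{a}}^!$ commutes with cones and shifts.
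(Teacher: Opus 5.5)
Your proposal is correct and follows the paper's proof. The first assertion uses the same contracting homotopy built from the coefficients $c_i$ of $H_b$. For the second, the paper likewise uses the triangle with the quotient complex and the relation $\sum_i c_i\overline{E}_{i,\gamma}=0$ to change the Koszul generators to $(\overline{E}_{2,\gamma},\ldots,\overline{E}_{n,\gamma},0)$, which splits the complex as $A\oplus A[-1]$ and gives $\chi_{\bf{a}}=0$. The detour through $\bf{t}^{-\bf{u}}$ and the face $\tau_b$ is unnecessary, and it is $x_j\bf{t}^{\bf{w}_j}$ rather than $x_j$ that acts by zero on the quotient, but neither point affects the argument.
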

\begin{proof}
    Denote $L^\dagger(\bf{d}+\bf{1}_b)/L^\dagger(\bf{d})$ by $Q_{\bf{d},b}^\dagger$ for short. We have \[\m{Cone}\big(C_\gamma^{\bullet}(L^\dagger(\bf{d}))\to C_\gamma^{\bullet}(L^\dagger(\bf{d}+\bf{1}_b))\big)\cong C_\gamma^{\bullet}\big(Q_{\bf{d},b}^\dagger\big)\] in the derived category. So for the first assertion, it suffices to show that $C_\gamma^{\bullet}\big(Q_{\bf{d},b}^\dagger\big)$ is acyclic.
    Still let $H_b(z)=\sum_{i=1}^n c_i z_i\in \b Z[z_1,\cdots, z_n]$ be the defining linear form for the hyperplane. Consider the $K$-linear morphism
    \begin{align*}
   h^k& :C_\gamma^k\big(Q_{\bf{d},b}^\dagger\big) \to C_\gamma^{k-1} \big(Q_{\bf{d},b}^\dagger\big),\quad
    \\ & \xi\frac{dt_{i_1}}{t_{i_1}}\wedge\cdots\wedge\frac{dt_{i_k}}{t_{i_k}}\mapsto\sum_{j=1}^k(-1)^{j-1}c_{i_j}\xi\frac{dt_{i_1}}{t_{i_1}}\wedge\cdots\wedge\widehat{\frac{dt_{i_j}}{t_{i_j}}}\wedge\cdots\wedge\frac{dt_{i_k}}{t_{i_k}}.
    \end{align*} One checks that $h^k$ is well defined and every class in the quotient has a representative of the form $\xi=\sum_{H_b(\bf{v})=-d_b-1}a_{\mathbf{u},\mathbf{v}}\mathbf{x}^{\mathbf{u}}\bf t^{\mathbf{v}}$. A direct computation gives
{\small
\begin{eqnarray*}(h^{k+1}d_{\gamma}^k+d_{\gamma}^{k-1}h^k)(\xi\frac{dt_{i_1}}{t_{i_1}}\wedge\cdots\wedge\frac{dt_{i_k}}{t_{i_k}})&=&\big(\sum_{i=1}^nc_it_i\partial_{t_i}+H_b(\gamma)+\sum_{j=1}^N\pi H_b(\bf{w}_j)x_j\bf{t}^{\bf{w}_j}\big)(\xi\frac{dt_{i_1}}{t_{i_1}}\wedge\cdots\wedge\frac{dt_{i_k}}{t_{i_k}})\\
&=&\big(\sum_{i=1}^nc_it_i\partial_{t_i}+H_b(\gamma)\big)(\xi\frac{dt_{i_1}}{t_{i_1}}\wedge\cdots\wedge\frac{dt_{i_k}}{t_{i_k}})\\
&=&(H_b(\gamma)-d_b-1)(\xi\frac{dt_{i_1}}{t_{i_1}}\wedge\cdots\wedge\frac{dt_{i_k}}{t_{i_k}}).
\end{eqnarray*}
}For the second equality, if $H_b(\bf{w}_j)=0$, the corresponding term vanishes, while if $H_b(\bf{w}_j)>0$, then $\bf{t}^{\bf{w}_j}\xi\in L^\dagger(\bf{d})$ and its class in the quotient is $0$. The third equality follows from the fact that $\xi$ is homogeneous of $H_b$-degree $-d_b-1$. Consequently, these $(H_b(\gamma)-d_b-1)^{-1}h^k$ induce a null-homotopy on the complex $C_\gamma^{\bullet}(Q_{\bf{d},b}^\dagger)$ when $H_b(\gamma)\neq d_b+1$. This proves the first assertion.

By \Cref{shiftdcoh}, $Q_{\bf{d},b}^\dagger$ is a coherent $\Gamma(\s P^N, \mathcal D^\dagger_{\mathscr{P}^N, \mathbb Q}(\infty_N))$-module. Hence it defines a coherent $\c D^\dagger_{\s P^N,\b Q}(\infty_N)$-module, which we denote by $\c Q_{\bf{d},b}^\dagger$. Then let $K_\bullet(\c Q_{\bf{d},b}^\dagger;\{\overline{E}_{1,\gamma},\cdots,\overline{E}_{n,\gamma}\})$ be the homological Koszul complex of the commuting operators $\overline{E}_{1,\gamma},\cdots,\overline{E}_{n,\gamma}$ on $\c Q_{\bf{d},b}^\dagger$. We have 
$H^k\big(C_\gamma^{\bullet}\big(\c Q_{\bf{d},b}^\dagger\big)\big)=H_{n-k}\big(K_\bullet(\c Q_{\bf{d},b}^\dagger;\{\overline{E}_{1,\gamma},\cdots,\overline{E}_{n,\gamma}\})\big)$.
If $H_b(\gamma)= d_b+1$, we have $\sum_{i=1}^nc_i\overline{E}_{i,\gamma}=0$. Then after renumbering the coordinates, we may assume that $c_1\neq 0$. Consider the following change of generators \[(\overline{E}_{1,\gamma},\cdots,\overline{E}_{n,\gamma})\begin{pmatrix}
    & & & c_1 \\ 1 & & & c_2 \\ & \ddots & & \vdots \\ & & 1 & c_n
\end{pmatrix}=(\overline{E}_{2,\gamma},\cdots,\overline{E}_{n,\gamma},0).\] We obtain an isomorphism of complexes \[K_\bullet(\c Q_{\bf{d},b}^\dagger;\{\overline{E}_{1,\gamma},\cdots,\overline{E}_{n,\gamma}\})\cong K_\bullet(\c Q_{\bf{d},b}^\dagger;\{\overline{E}_{2,\gamma},\cdots,\overline{E}_{n,\gamma},0\}).\]
Hence it is isomorphic to $K_\bullet(\c Q_{\bf{d},b}^\dagger;\{\overline{E}_{2,\gamma},\cdots,\overline{E}_{n,\gamma}\})\oplus K_\bullet(\c Q_{\bf{d},b}^\dagger;\{\overline{E}_{2,\gamma},\cdots,\overline{E}_{n,\gamma}\})[-1]$.
It follows that \begin{align*}
    &\,\chi_{\bf{a}}\big(K_\bullet(\c Q_{\bf{d},b}^\dagger;\{\overline{E}_{1,\gamma},\cdots,\overline{E}_{n,\gamma}\})\big)
    \\ = & \,\chi_{\bf{a}}\big(K_\bullet(\c Q_{\bf{d},b}^\dagger;\{\overline{E}_{2,\gamma},\cdots,\overline{E}_{n,\gamma}\})\big)-\chi_{\bf{a}}\big(K_\bullet(\c Q_{\bf{d},b}^\dagger;\{\overline{E}_{2,\gamma},\cdots,\overline{E}_{n,\gamma}\})\big)=0.
\end{align*}
This proves the second assertion.
\end{proof}

\begin{cor}\label{shiftbycone}
    Fix $1\le j\le N$. If $H_b(\gamma)\notin \{1,2,\ldots,H_b(\bf{w}_j)\}$ for any $1\le b\le s$, then the morphism $\bf{t}^{\bf{w}_j}:C_{\gamma}^{\bullet}(L^{\dagger})\to C_{\gamma-\bf{w}_j}^{\bullet}(L^{\dagger})$ is a quasi-isomorphism.

    If $\Sigma_{\gamma}$ does not have $p$-adic Liouville numbers, we still have $\chi_{\bf{a}}\big(C_\gamma^{\bullet}(\c L^\dagger)\big)=\chi_{\bf{a}}\big(C_{\gamma-\bf{w}_j}^{\bullet}(\c L^\dagger)\big)$ for any closed point $\bf{a}\in\s{P}^N$.
\end{cor}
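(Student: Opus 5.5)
Multiplication by $\mathbf t^{\mathbf w_j}$ is an isomorphism of complexes $C_\gamma^\bullet(L^\dagger(\mathbf d))\xrightarrow{\sim} C_{\gamma-\mathbf w_j}^\bullet(L^\dagger(\mathbf d-\mathbf h_j))$, where $\mathbf h_j:=(H_1(\mathbf w_j),\ldots,H_s(\mathbf w_j))\in\mathbb Z_{\ge0}^s$. This is because $\mathbf t^{\mathbf w_j}$ maps $\delta(\mathbf d)$-supported series bijectively to $\delta(\mathbf d-\mathbf h_j)$-supported ones, with the same overconvergence estimates since $|\mathbf v+\mathbf w_j|$ and $|\mathbf v|$ differ by a constant. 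It also intertwines the operators: $E_{i,\gamma-\mathbf w_j}\circ \mathbf t^{\mathbf w_j}=\mathbf t^{\mathbf w_j}\circ E_{i,\gamma}$, since $t_i\partial_{t_i}(\mathbf t^{\mathbf w_j}\xi)=\mathbf t^{\mathbf w_j}(t_i\partial_{t_i}+w_{ij})\xi$. Taking $\mathbf d=0$, the map $\mathbf t^{\mathbf w_j}:C_\gamma^\bullet(L^\dagger)\to C_{\gamma-\mathbf w_j}^\bullet(L^\dagger)$ factors as this isomorphism $C_\gamma^\bullet(L^\dagger)\cong C_{\gamma-\mathbf w_j}^\bullet(L^\dagger(-\mathbf h_j))$ followed by the canonical inclusion $C_{\gamma-\mathbf w_j}^\bullet(L^\dagger(-\mathbf h_j))\hookrightarrow C_{\gamma-\mathbf w_j}^\bullet(L^\dagger)$. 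It therefore suffices to treat this inclusion.

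Factor the inclusion into a chain of one-step inclusions $L^\dagger(\mathbf d)\subset L^\dagger(\mathbf d+\mathbf 1_b)$, increasing the coordinates one facet at a time. Starting from $-\mathbf h_j$, we raise the $b$-th coordinate from $-H_b(\mathbf w_j)$ to $0$ for $b=1,\ldots,s$ in turn. Each step goes from some $\mathbf d$ with $d_b=k-1$ to $\mathbf d+\mathbf 1_b$, where $-H_b(\mathbf w_j)+1\le k\le 0$, and we apply \Cref{shiftbyfacet} with parameter $\gamma-\mathbf w_j$. That step is a quasi-isomorphism provided $H_b(\gamma-\mathbf w_j)\ne d_b+1=k$, i.e.\ $H_b(\gamma)\ne H_b(\mathbf w_j)+k$. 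As $k$ ranges over $\{1-H_b(\mathbf w_j),\ldots,0\}$, the excluded values $H_b(\mathbf w_j)+k$ range exactly over $\{1,\ldots,H_b(\mathbf w_j)\}$, which is the stated hypothesis. Composing gives the first assertion. For facets with $H_b(\mathbf w_j)=0$ there are no steps.

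For the second assertion, assume $\Sigma_\gamma$ has no Liouville numbers. Then $\Sigma_{\gamma-\mathbf w_j}=\Sigma_\gamma$ up to integers, since $\mathbf w_j\in\mathbb Z^n$, and the Liouville property is invariant under integer shifts. So every intermediate complex $C^\bullet_{\gamma-\mathbf w_j}(\c L^\dagger(\mathbf d))$ is overholonomic by \Cref{shiftdcoh}, and $\chi_{\mathbf a}$ is defined for each. At each step, \Cref{shiftbyfacet} gives either a quasi-isomorphism or, in the resonant case, equality of $\chi_{\mathbf a}$. Hence $\chi_{\mathbf a}$ is constant along the chain. The isomorphism $\mathbf t^{\mathbf w_j}$ is compatible with the $D^\dagger$-structure: it commutes with the $x$-connection, because $\nabla$ involves only $\mathbf x$-derivatives and multiplication by $\pi\mathbf t^{\mathbf w_k}$. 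It therefore induces an isomorphism of the associated $\c D^\dagger$-complexes, so $\chi_{\mathbf a}(C_\gamma^\bullet(\c L^\dagger))=\chi_{\mathbf a}(C_{\gamma-\mathbf w_j}^\bullet(\c L^\dagger(-\mathbf h_j)))=\chi_{\mathbf a}(C_{\gamma-\mathbf w_j}^\bullet(\c L^\dagger))$.

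The main point needing care is the bookkeeping of the index range of $k$. One must check that it matches $\{1,\ldots,H_b(\mathbf w_j)\}$ exactly, and that \Cref{shiftbyfacet} applies to arbitrary $\mathbf d\in\mathbb Z^s$, including mixed-sign intermediate vectors, which it does as stated.
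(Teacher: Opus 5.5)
Your proof is correct and is essentially the paper's argument. The paper factors the map as the inclusion $C_\gamma^\bullet(L^\dagger)\hookrightarrow C_\gamma^\bullet(L^\dagger(\mathbf h_j))$ (your $\mathbf h_j$ is the paper's $\mathbf d_j$; the inclusion is a chain of facet steps via \Cref{shiftbyfacet} with parameter $\gamma$) followed by the isomorphism $\mathbf t^{\mathbf w_j}:C_\gamma^\bullet(L^\dagger(\mathbf h_j))\to C_{\gamma-\mathbf w_j}^\bullet(L^\dagger)$, whereas you apply the isomorphism first and then the inclusion $L^\dagger(-\mathbf h_j)\subset L^\dagger$ with parameter $\gamma-\mathbf w_j$; the two chains are conjugate under $\mathbf t^{\mathbf w_j}$, so the excluded values $\{1,\ldots,H_b(\mathbf w_j)\}$ and the $\chi_{\mathbf a}$ bookkeeping coincide, and your extra checks (non-Liouville hypothesis stable under $\gamma\mapsto\gamma-\mathbf w_j$, compatibility of $\mathbf t^{\mathbf w_j}$ with $\nabla$) are points the paper leaves implicit.
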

\begin{proof}
By \Cref{shiftbyfacet}, it follows that $C_\gamma^{\bullet}(L^\dagger)\to C_\gamma^{\bullet}(L^\dagger(\bf{d}_j))$ is a quasi-isomorphism, where $\bf{d}_j=(H_1(\bf{w}_j),\cdots,H_s(\bf{w}_j))$. We also have $\chi_{\bf{a}}\big(C_\gamma^{\bullet}(\c L^\dagger)\big)=\chi_{\bf{a}}\big(C_\gamma^{\bullet}(\c L^\dagger(\bf{d}_j))\big)$ if $\Sigma_{\gamma}$ does not have $p$-adic Liouville numbers. Then the assertion follows from the following commutative diagram \[\begin{tikzcd}
    L^\dagger(\bf{d}_j) \ar[rrrr,"t_i\partial_{t_i}+\gamma_i+\sum_{r=1}^N\pi w_{ir}x_r\bf{t}^{\bf{w}_r}"]\ar[d,"\cong ","\cdot \bf{t}^{\bf{w}_j}"'] & & & & L^\dagger(\bf{d}_j)\ar[d,"\cong"',"\cdot \bf{t}^{\bf{w}_j}"] \\ L^\dagger\ar[rrrr,"t_i\partial_{t_i}+\gamma_i-w_{ij}+\sum_{r=1}^N\pi w_{ir}x_r\bf{t}^{\bf{w}_r}"] & & & & L^\dagger. \end{tikzcd}\qedhere\]
\end{proof}

If $\sum_{j=1}^N\b{Z}_{\geq 0}\bf{w}_j=\delta\cap\b{Z}^n$, then the first assertion of the above corollary is the $p$-adic analog of \cite[Remark 3.7]{SW09}. The equivalence of the two conditions can be found in \cite[Lemma 3.8]{Fang}.

\begin{cor}\label{Frobenius}
    Assume $(q-1)\gamma\in \b{Z}^n$. Then we have a canonical Frobenius structure on $C_\gamma^{\bullet}(\c L^\dagger)\in D_{\m{coh}}^b(\c D^{\dagger}_{\s{P}^N,\b{Q}}(\infty_N))$.
\end{cor}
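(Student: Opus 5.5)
The plan is to reduce to the case $\gamma\in-\delta$, where Fu--Li--Wan--Zhang \cite{padictorus} construct a Frobenius structure on $C_\gamma^\bullet(\c L^\dagger)$ via the Dwork operator, and then transport it along the quasi-isomorphisms of \Cref{shiftbycone}.

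The first step is the reduction. Since $(q-1)\gamma\in\b Z^n$, each $\gamma_i$ is rational, hence $p$-adically non-Liouville, and every element of $\Sigma_\gamma$ is rational as well. I will choose a vector $\bf v=\sum_j m_j\bf w_j$ with $m_j\in\b Z_{\ge0}$, and pass from $\gamma$ to $\gamma'=\gamma-\bf v$ by a chain of single steps $\gamma^{(k)}\mapsto\gamma^{(k)}-\bf w_{j_k}$. The vector $\bf v$ should be large and deep in the interior of $\delta$, so that $H_b(\gamma')<0$ for all $b$ and hence $\gamma'\in-\delta$. For the step at $\gamma^{(k)}$ to be a quasi-isomorphism, \Cref{shiftbycone} requires $H_b(\gamma^{(k)})\notin\{1,\dots,H_b(\bf w_{j_k})\}$ for all $b$.

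The main obstacle is this hypothesis, since $H_b(\gamma)$ may be a positive integer and the shift can then cross a resonant value. My first attempt is to exploit the freedom in choosing the path. I will also use the observation that decreasing $H_b$ only matters when $H_b(\gamma)$ is a positive integer.

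If some step is unavoidably resonant, I fall back on \Cref{shiftbyfacet}. I will compare $C_\gamma^\bullet(L^\dagger)$ and $C_{\gamma'}^\bullet(L^\dagger)$ through the intermediate modules $L^\dagger(\bf d)$, ordering the facet steps so that the resonant crossing $H_b(\gamma)=d_b+1$ is avoided. This is possible if one instead enlarges $\bf d$ in the direction where the inequality fails only for $d_b\ge H_b(\gamma)$. Concretely, choosing $\bf u\in\b Z^n$ with $\gamma+\bf u\in-\delta$ generic and using $C_\gamma^\bullet(L^\dagger(\bf d))\cong C_{\gamma-\bf u}^\bullet(L^\dagger(\bf d'))$ via multiplication by $\bf t^{\bf u}$, I aim for a zig-zag of quasi-isomorphisms that respect the $\c D^\dagger$-structure. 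The thing to check is whether such non-resonant paths exist for every rational $\gamma$.

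Once $\gamma'\in-\delta$ is reached, I will take the Dwork-operator Frobenius structure $\Phi:F^*C_{\gamma'}^\bullet(\c L^\dagger)\xrightarrow{\sim}C_{\gamma'}^\bullet(\c L^\dagger)$. It is built from the splitting operator $\psi_q\circ\exp(\pi F(\bf x,\bf t)-\pi F(\bf x^q,\bf t^q))$, which preserves $L^\dagger$ when $q\gamma'\equiv\gamma'$ and $\gamma'\in-\delta$. I will transport $\Phi$ to $C_\gamma^\bullet(\c L^\dagger)$ through the zig-zag, using that $F^*$ commutes with the multiplication maps by $\bf t^{\bf w}$ up to the identification $F^*C_{\gamma}\simeq C_{q\gamma}$ and $q\gamma\equiv\gamma \pmod{\b Z^n}$.

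For canonicity, I will check that two choices of path give the same structure. This amounts to the commutativity of the diagrams of multiplication maps, which follows because they compose to multiplication by monomials, and the Dwork operator intertwines $\bf t^{\bf w}$ with $\bf t^{q\bf w}$.
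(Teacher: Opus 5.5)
\textbf{The reduction to $-\delta$ fails, and no choice of path can repair it.} Suppose $H_b(\gamma)=m\in\mathbb Z_{>0}$ for some facet $b$. An example is $\gamma\in\mathbb Z^n$ in the interior of $\delta$, which certainly satisfies $(q-1)\gamma\in\mathbb Z^n$. Along any chain $\gamma^{(k)}\mapsto\gamma^{(k)}-\mathbf w_{j_k}$, the integer $H_b(\gamma^{(k)})$ is non-increasing and must fall from $m$ to a nonpositive value. At the step where it does, $H_b(\gamma^{(k)})\in\{1,\dots,H_b(\mathbf w_{j_k})\}$, so \Cref{shiftbycone} does not apply.

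Your fallback through $L^\dagger(\mathbf d)$ hits the same wall. For $C^\bullet_{\gamma''}(L^\dagger(\mathbf d''))$, the quantity $H_b(\gamma'')-d''_b$ is unchanged by multiplication by $\mathbf t^{\mathbf u}$. Each inclusion in the $b$-th direction changes it by one. The step taking it from $1$ to $0$ is exactly the excluded case $H_b(\gamma'')=d''_b+1$ of \Cref{shiftbyfacet}, and you must pass through that step to go from $m$ to a nonpositive value.

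This is not a defect of the method; the complexes are genuinely different. Take $n=N=1$, $\mathbf w_1=1$, $\gamma=1$. The class of $x$ in $H^1(C^\bullet_1(L^\dagger))$ is horizontal, since $(\partial_x+\pi t)(x)=1+\pi xt=E_{1,1}(1)$. It is nonzero: the unique formal preimage of $x$ under $E_{1,1}$ has coefficients $(-\pi)^k/(k+1)!$ on $x^{k+1}t^k$, which do not decay. By contrast, one checks diagonal by diagonal in $u-v$ that $H^1(C^\bullet_0(L^\dagger))$ has no nonzero horizontal class. Moreover $C^\bullet_{\gamma'}\simeq C^\bullet_0$ for all integers $\gamma'\le 0$. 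So $C^\bullet_1$ is quasi-isomorphic to no $C^\bullet_{\gamma'}$ with $\gamma'\in-\delta$. The Dwork-operator Frobenius therefore cannot be transported to such $\gamma$, and your canonicity check is moot.

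\textbf{The missing idea.} Compare $C^\bullet_{q\gamma}$ with $C^\bullet_\gamma$ directly instead of passing through $-\delta$. You already invoke $\mathrm{Fr}^*C^\bullet_\gamma(L^\dagger)\simeq C^\bullet_{q\gamma}(L^\dagger)$. In the paper this is $F_1=\exp(\pi F(\mathbf x^q,\mathbf t^q)-\pi F(\mathbf x,\mathbf t))\circ\Phi$, with $G_1\circ F_1=\mathrm{id}$ and $F_1\circ G_1$ homotopic to the identity after adjoining $q$-th roots of unity.

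The two parameters differ by $(q-1)\gamma$, and $d_b:=H_b((q-1)\gamma)=(q-1)H_b(\gamma)$ has the same sign as $H_b(\gamma)$. So you can enlarge on both sides without ever meeting resonance:
\begin{itemize}
\item On the $q\gamma$ side, enlarge to $\mathbf d^+$ in the facets with $H_b(\gamma)>0$. This is non-resonant because $H_b(q\gamma)=d_b+H_b(\gamma)>d_b$.
\item On the $\gamma$ side, enlarge to $\mathbf d^-$ in the facets with $H_b(\gamma)<0$. This is non-resonant because $H_b(\gamma)<0$.
\end{itemize}
Multiplication by $\mathbf t^{(q-1)\gamma}$ then gives a horizontal isomorphism $C^\bullet_{q\gamma}(L^\dagger(\mathbf d^+))\cong C^\bullet_\gamma(L^\dagger(\mathbf d^-))$. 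This works for every $\gamma$ with $(q-1)\gamma\in\mathbb Z^n$. The Dwork operator reappears only in the special case $\gamma\in-\delta$.
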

\begin{proof}
    Since $C_\gamma^\bullet(\c L^\dagger)$ is coherent, it suffices to give a Frobenius structure on the global sections $C_\gamma^\bullet(L^\dagger)$. Consider the $K$-algebra homomorphism $\m{Fr}^{\sharp}:K\left<\bf{x}\right>^{\dagger}\to K\left<\bf{x}\right>^{\dagger},\,\,x_j\mapsto x_j^q$. Let
$$(C_\gamma^{(1),\bullet}(L^\dagger),\nabla^{(1)}):=\m{Fr}^*(C_\gamma^{\bullet}(L^\dagger), \nabla)=(C_\gamma^{\bullet}(L^\dagger),\nabla)\otimes_{K\left<\bf{x}\right>^{\dagger},\m{Fr}^{\sharp}}K\left<\bf{x}\right>^{\dagger}.$$
As in \cite[Section 2.1]{padictorus}, one verifies that $C_\gamma^{(1),\bullet}(L^\dagger)$ is the logarithmic de Rham complex with $C_\gamma^{(1),k}(L^\dagger)=C_\gamma^{k}(L^\dagger)$ for each $k$, and the differential $d^{(1),k}_\gamma:C_\gamma^{(1),k}(L^\dagger)\to C_\gamma^{(1),k+1}(L^\dagger)$ is given by
$$d^{(1),k}_\gamma(\omega)=\mathrm d_{\mathbf t}\omega + \sum_{i=1}^n \Big(\gamma_i+\sum_{j=1}^N \pi w_{ij}x_j^q\mathbf t^{\mathbf w_j}\Big)\frac{\mathrm dt_i}{t_i}\wedge \omega.
$$
The connection $\nabla^{(1)}$ is defined by
$$\nabla^{(1)}(\omega)=d_{\bf{x}}\omega+\sum_{j=1}^N q\pi x_j^{q-1}\mathbf t^{\mathbf w_j}dx_j\wedge\omega.$$

We first construct a horizontal quasi-isomorphism $F_1:(C_\gamma^{(1),\bullet}(L^\dagger),\nabla^{(1)})\to (C_{q\gamma}^{\bullet}(L^\dagger), \nabla)$. Let $\Phi$ and $\Psi$ be operators in the variables $\mathbf t$ defined by
$$\Phi(\sum_{\bf{v}} a_{\bf{v}}(\bf{x})\bf{t}^{\bf{v}})=\sum_{\bf{v}} a_{\bf{v}}(\bf{x})\bf{t}^{q\bf{v}}, \ \Psi(\sum_{\bf{v}} a_{\bf{v}}(\bf{x})\bf{t}^{\bf{v}})=\sum_{\bf{v}} a_{q\bf{v}}(\bf{x})\bf{t}^{\bf{v}}.$$
We extend $\Phi, \Psi$ to differential forms by
\begin{eqnarray*}
\Phi\Big(\sum_{1\leq i_1<\cdots < i_k\leq n}f_{i_1\ldots i_k}(\mathbf x, \mathbf t)\frac{\mathrm dt_{i_1}}{t_{i_1}}\wedge \cdots\wedge \frac{\mathrm dt_{i_k}}
{t_{i_k}}\Big)
&=&\sum_{1\leq i_1<\cdots < i_k\leq n}q^kf_{i_1\ldots i_k}(\mathbf x, \mathbf t^q)\frac{\mathrm dt_{i_1}}{t_{i_1}}\wedge \cdots\wedge 
\frac{\mathrm dt_{i_k}}{t_{i_k}},\\
\Psi\Big(\sum_{1\leq i_1<\cdots < i_k\leq n}f_{i_1\ldots i_k}(\mathbf x, \mathbf t)\frac{\mathrm dt_{i_1}}{t_{i_1}}\wedge \cdots\wedge \frac{\mathrm dt_{i_k}}
{t_{i_k}}\Big)
&=&\sum_{1\leq i_1<\cdots < i_k\leq n}q^{-k} \Psi(f_{i_1\ldots i_k}(\mathbf x,\mathbf t))\frac{\mathrm dt_{i_1}}{t_{i_1}}\wedge \cdots\wedge 
\frac{\mathrm dt_{i_k}}{t_{i_k}}.
\end{eqnarray*}
They commute with $d_{\bf{t}}$. For each $k$, define the maps $$F_1:C^{(1),k}_\gamma(L^\dagger)\to C^k_{q\gamma}(L^\dagger), \ G_1:C^k_{q\gamma}(L^\dagger)\to C^{(1),k}_\gamma(L^\dagger)$$ by
\begin{eqnarray*}
F_1&=&\exp(-\pi F(\bf{x},\bf{t}))\circ \Phi\circ \exp(\pi F(\bf{x}^q,\bf{t}))=\exp(\pi F(\bf{x}^q,\bf{t}^q)-\pi F(\bf{x},\bf{t}))\circ\Phi,\\
G_1&=&\exp(-\pi F(\bf{x}^q,\bf{t}))\circ \Psi\circ \exp(\pi F(\bf{x},\bf{t}))=\Psi\circ \exp(\pi F(\bf{x},\bf{t})-\pi F(\bf{x}^q,\bf{t}^q)).
\end{eqnarray*}
Both exponential functions on the right hand sides belong to $L^\dagger$, so they are well defined. One verifies directly that $$F_1\circ d^{(1),k}_\gamma=d^{k}_{q\gamma}\circ F_1, \ F_1\circ \nabla^{(1)}=\nabla \circ F_1,$$ $$G_1\circ d^{k}_{q\gamma}=d^{(1),k}_\gamma\circ G_1, \ G_1\circ \nabla=\nabla^{(1)}\circ G_1.$$ Then 
$$F_1:(C_\gamma^{(1),\bullet}(L^\dagger),\nabla^{(1)})\to (C_{q\gamma}^{\bullet}(L^\dagger), \nabla), \ G_1:(C_{q\gamma}^{\bullet}(L^\dagger), \nabla)\to (C_\gamma^{(1),\bullet}(L^\dagger),\nabla^{(1)}) $$ 
are horizontal morphisms of complexes.  We have $G_1\circ F_1=\text{id}$ on $C^{(1),\bullet}_\gamma(L^\dagger)$. Let $K'/K$ be a finite extension containing all $q$-th roots of unity. Applying the homotopy argument of \cite[Section 2.2]{padictorus} to the parameter $q\gamma$, we obtain that $F_1\circ G_1$ is homotopic to $\m{id}$ on $C^\bullet_{q\gamma}(L^\dagger)\otimes_K K'$. Since $K'/K$ is faithfully flat, $F_1$ and $G_1$ induce isomorphisms on cohomology groups inverse to each other.

Next, we construct a horizontal quasi-isomorphism $(C_{q\gamma}^{\bullet}(L^\dagger),\nabla)\to (C_{\gamma}^{\bullet}(L^\dagger), \nabla)$. For $1\le b\le s$, let 
$$d_b=H_b((q-1)\gamma)\in\b{Z},\ d_b^+=\max\{0,d_b\},\  d_b^-=\max\{0,-d_b\}.$$
Consider the vectors
 $$\bf{d}^+=(d_b^+)_{1\le b\le s}\in \b Z_{\ge 0}^s,\ \bf{d}^-=(d_b^-)_{1\le b\le s}\in\b{Z}_{\ge 0}^s.$$
Then $\bf{d}:=(d_b)_{1\le b\le s}=\bf{d}^+ - \bf{d}^-.$ For any $1\le b \le s$ such that $d_b=d_b^+>0$, we have $H_b(\gamma)=\frac{d_b}{q-1}>0$. Hence
$$H_b(q\gamma)=d_b+H_b(\gamma)\notin \{1,2,\ldots, d_b\}. $$
By successive applications of \Cref{shiftbyfacet}, the inclusion $(C_{q\gamma}^{\bullet}(L^\dagger),\nabla)\to (C_{q\gamma}^{\bullet}(L^\dagger(\bf{d}^+)), \nabla)$ is a horizontal quasi-isomorphism. For any $1\le b\le s$ such that $d_b=-d_b^-< 0$, we have $H_b(\gamma)=\frac{d_b}{q-1}< 0$,
and hence $$H_b(\gamma)\notin\{1,2,\ldots, d_b^-\}.$$
Again by \Cref{shiftbyfacet}, the inclusion $(C_{\gamma}^{\bullet}(L^\dagger),\nabla)\to (C_{\gamma}^{\bullet}(L^\dagger(\bf{d}^-)),\nabla)$ is a horizontal quasi-isomorphism. 
Since $\delta(\bf{d}^+)+(q-1)\gamma=\delta(\bf{d}^-),$ multiplication by $\bf{t}^{(q-1)\gamma}$ defines an isomorphism of modules:
$$\bf{t}^{(q-1)\gamma}\cdot: L^\dagger(\bf{d}^+)\to L^\dagger(\bf{d}^-),$$ with inverse given by multiplication by $\bf{t}^{(1-q)\gamma}$. It extends to a horizontal isomorphism
$$\bf{t}^{(q-1)\gamma}\cdot: (C_{q\gamma}^{\bullet}\big(L^\dagger(\bf{d}^+)),\nabla\big)\to \big(C_{\gamma}^{\bullet}(L^\dagger(\bf{d}^-)), \nabla\big).$$
This is because multiplication by $\bf{t}^{(q-1)\gamma}$ commutes with the connection operators and the following diagram commutes for any $i$:
\[\begin{tikzcd} L^\dagger(\bf{d}^+) \ar[rrrr,"t_i\partial_{t_i}+q\gamma_i+\sum_{j=1}^N\pi w_{ij}x_j\bf{t}^{\bf{w}_j}"]\ar[d,"\cong ","\cdot \bf{t}^{(q-1)\gamma}"'] & & & & L^\dagger(\bf{d}^+)\ar[d,"\cong"',"\cdot \bf{t}^{(q-1)\gamma}"] \\ L^\dagger(\bf{d}^-)\ar[rrrr,"t_i\partial_{t_i}+\gamma_i+\sum_{j=1}^N\pi w_{ij}x_j\bf{t}^{\bf{w}_j}"] & & & & L^\dagger(\bf{d}^-). \end{tikzcd}\] 
In summary, we construct the horizontal isomorphisms
$$(C_{q\gamma}^{\bullet}(L^\dagger),\nabla)\xrightarrow{\cong} \big(C_{q\gamma}^{\bullet}(L^\dagger(\bf{d}^+)), \nabla\big)\xrightarrow{\bf{t}^{(q-1)\gamma}\cdot} \big(C_{\gamma}^{\bullet}(L^\dagger(\bf{d}^-)), \nabla\big)\xrightarrow{\cong} (C_{\gamma}^{\bullet}(L^\dagger),\nabla).$$
Together with $F_1$, this yields the required Frobenius structure in the derived category. This completes the proof.
\end{proof}

\begin{proof}[Proof of \Cref{complex}(iii)]
    We prove this by the same argument as in the proof of \cite[Proposition 1.18]{padictorus}.
    First, we assume that $(q-1)\gamma\in (-\delta)\cap\b{Z}^n$. In this case, $H_b(\gamma)\leq 0$ for any $b=1,\ldots,s$. It follows that the inverse of the Frobenius constructed in \Cref{Frobenius} is just the Dwork operator $G$ defined in \cite[Proposition 1.5]{padictorus}. Let $i_{\bf{a}}:\bf{a}\to U_0$ be a non-degenerate point. By the calculation in \cite[5.8]{padictorus}, $i_{\bf{a}}^![N]C_{\gamma}^{\bullet}(\c L^\dagger)$ is isomorphic to $C_{\gamma}^{\bullet}(L^\dagger_0)$ in \cite[5.6]{padictorus}. So the finiteness of $C_{\gamma}^{\bullet}(L^\dagger_0)$ follows from the overholonomicity of $C_{\gamma}^{\bullet}(\c L^\dagger)$. Therefore, by the same argument as in \cite[5.6]{padictorus}, we obtain that $\chi_{\bf{a}}\big(C_{\gamma}^{\bullet}(\c L^\dagger))=(-1)^{n+N}n!\operatorname{vol}(\Delta)$. 
    
    For general $\gamma$ with $(q-1)\gamma\in\b{Z}^n$, we may choose an open dense subset $V_0\subseteq \mathbb{A}_{\kappa}^N$ such that $H^n(C_{\gamma}^{\bullet}(\c L^\dagger))|_{V_0}$ is a convergent isocrystal. Therefore, by \Cref{shiftbycone}, we have $\dim(H^n(C_{\gamma}^{\bullet}(L^\dagger_0)))=n!\operatorname{vol}(\Delta)$ if $\bf{a}\in V_0\cap U_0$. Then the proof of \cite[Lemma 5.13]{padictorus} applies in our setting. Moreover, by the same argument of \cite[5.14]{padictorus}, $H^n(C_{\gamma}^{\bullet}(\c L^\dagger))|_{U_0}$ is a convergent isocrystal. Hence it is overconvergent by \cite[Theorem 6.2.8]{Caro19}. This proves the theorem.
\end{proof}

\begin{proof}[Proof of \Cref{Dmodule}]
    Note that $\m{Hyp}_{\gamma}\cong\m{Hyp}_{\gamma-\bf{v}}$ for any vector $\bf{v}\in\b{Z}^n$. By the comparison theorem \ref{main}, this is an immediate corollary of \Cref{complex}.
\end{proof}

{\sc \noindent\textbf{Data Availability}} All data included in this study are available upon request by contact with the corresponding author.
\begin{center}
{\sc \textbf{Declarations}} 
\end{center}

{\sc \noindent\textbf{Conflict of interest}} On behalf of all authors, the corresponding author states that there is no conflict of interest.

{\sc \noindent\textbf{AI use statement}} The main results and their proofs were developed by the authors. AI tools were used to assist in reading some references, reviewing the manuscript for potential errors and inconsistencies, and polishing some of the proofs.
The authors take full responsibility for the results of this paper.

\begin{center}
{\sc \textbf{Acknowledgements}}
\end{center}
\noindent The authors thank their advisor, Professor Lei Fu, for proposing the problem studied in this paper and for his valuable suggestions and helpful discussions.
H. Zhang is supported by National Key R\&D Program of China 2024YFA1014600, Jiangsu Provincial Scientific Research Center of Applied Mathematics (Grant No. BK20233002), National Natural Science Foundation of China (Grant No. 12301012), Natural Science Foundation of Jiangsu Province (Grant No. BK20230802),  and also by Zhishan Scholars
Programs of Southeast University (Grant No. 2242025RCB0011).

\bibliographystyle{alpha}
\bibliography{ref}

@article {GKZ,
    AUTHOR = {Gel'fand, I. M. and Zelevinski\u{i}, A. V. and Kapranov, M. M.},
     TITLE = {Hypergeometric functions and toral manifolds},
   JOURNAL = {Funct. Anal. Appl.},
  FJOURNAL = {Functional Analysis and Its Applications},
    VOLUME = {23},
      YEAR = {1989},
    NUMBER = {2},
     PAGES = {94--106},
      ISSN = {0016-2663},
   MRCLASS = {22E30 (14L32 32C42 33A30 58G07)},
  MRNUMBER = {1011353},
MRREVIEWER = {A.\ Verona},
       DOI = {10.1007/BF01078777},
       URL = {https://doi.org/10.1007/BF01078777},
}

@incollection {Huyghe2004Trans,
    AUTHOR = {Noot-Huyghe, Christine},
     TITLE = {Transformation de {F}ourier des {$\s{D}$}-modules
              arithm\'{e}tiques. {I}},
 BOOKTITLE = {Geometric aspects of {D}work theory. {V}ol. {I}, {II}},
     PAGES = {857--907},
 PUBLISHER = {Walter de Gruyter, Berlin},
      YEAR = {2004},
      ISBN = {3-11-017478-2},
   MRCLASS = {14F10 (32C38)},
  MRNUMBER = {2099091},
}

@article{Ddaggeraffine,
     author = {Noot-Huyghe, Christine},
     title = {{$D^\dagger (\infty )$}-affinit\'e des sch\'emas projectifs},
     journal = {Annales de l'Institut Fourier},
     pages = {913--956},
     publisher = {Association des Annales de l{\textquoteright}institut Fourier},
     volume = {48},
     number = {4},
     year = {1998},
     doi = {10.5802/aif.1643},
     mrnumber = {2000a:14019},
     zbl = {0910.14005},
     language = {fr},
     url = {https://www.numdam.org/articles/10.5802/aif.1643/}
}

@book{coxtoric,
     author = {Cox, David and Little, John and Schenck, Hal},
     title = {Toric Varieties},
     year={2011},
     series = {Graduate Studies in Mathematics},
     volume = {124},
     publisher = {American Mathematical Society, Providence, RI},
}

@article{Berthelot1,
  author  = {Berthelot, P.},
  title   = {$\mathcal{D}$-modules arithmétiques. {I}. {Opérateurs} différentiels de niveau fini},
  journal = {Annales scientifiques de l'École Normale Supérieure},
  year    = {1996},
  volume  = {29},
  pages   = {185--272}
}

@misc{Caro19,
  author       = {Caro, Daniel},
  title        = {Arithmetic $\mathcal{D}$-modules over algebraic varieties of characteristic $p>0$},
  year         = {2019},
  eprint       = {1909.08432},
  archiveprefix = {arXiv},
  url          = {https://arxiv.org/abs/1909.08432},
  note         = {arXiv:1909.08432},
  primaryclass = {math.AG}
}

@article{toricmodule,
  title={Homological methods for hypergeometric families},
  author={Laura Felicia Matusevich and Ezra Miller and Uli Walther},
  journal={Journal of the American Mathematical Society},
  year={2005},
  volume={18},
  pages={919--941},
  url={https://api.semanticscholar.org/CorpusID:10363377}
}

@article{SW09,
title = {Hypergeometric {D}-modules and twisted {Gau{\ss}--Manin} systems},
journal = {Journal of Algebra},
volume = {322},
number = {9},
pages = {3392--3409},
year = {2009},
note = {Special Issue in Honor of Paul Roberts},
issn = {0021-8693},
doi = {10.1016/j.jalgebra.2008.09.010},
url = {https://www.sciencedirect.com/science/article/pii/S002186930800447X},
author = {Mathias Schulze and Uli Walther}
}

@article {padictorus,
    AUTHOR = {Fu, Lei and Li, Peigen and Wan, Daqing and Zhang, Hao},
     TITLE = {{$p$}-adic {GKZ} hypergeometric complex},
   JOURNAL = {Math. Ann.},
  FJOURNAL = {Mathematische Annalen},
    VOLUME = {387},
      YEAR = {2023},
    NUMBER = {3-4},
     PAGES = {1629--1689},
      ISSN = {0025-5831,1432-1807},
   MRCLASS = {14F30 (11T23 14G15 33C70)},
  MRNUMBER = {4657433},
MRREVIEWER = {Noriko\ Yui},
       DOI = {10.1007/s00208-022-02491-9},
       URL = {https://doi.org/10.1007/s00208-022-02491-9},
}

@article{Carosurhol,
  author  = {Caro, Daniel},
  title   = {$\mathcal{D}$-modules arithmétiques surholonomes},
  journal = {Annales scientifiques de l'École Normale Supérieure},
  series  = {4},
  year    = {2009},
  volume  = {42},
  number  = {1},
  pages   = {141--192},
  doi     = {10.24033/asens.2092},
  url     = {https://www.numdam.org/articles/10.24033/asens.2092/}
}

@article{Carotensorproduct,
  title={Sur la stabilit{\'e} par produit tensoriel de complexes de $\mathcal{D}$-modules arithm{\'e}tiques},
  author={Daniel Caro},
  journal={Manuscripta Mathematica},
  year={2015},
  volume={147},
  pages={1--41},
  url={https://api.semanticscholar.org/CorpusID:122802940}
}

@article {CaroUnipotentmonodromy,
    AUTHOR = {Caro, Daniel},
     TITLE = {Unipotent monodromy and arithmetic {$\mathcal {D}$}-modules},
   JOURNAL = {Manuscripta Math.},
  FJOURNAL = {Manuscripta Mathematica},
    VOLUME = {156},
      YEAR = {2018},
    NUMBER = {1-2},
     PAGES = {81--115},
      ISSN = {0025-2611,1432-1785},
   MRCLASS = {14F10 (14F30)},
  MRNUMBER = {3783567},
MRREVIEWER = {Alberto\ Casta\~no Dom\'inguez},
       DOI = {10.1007/s00229-017-0959-y},
       URL = {https://doi.org/10.1007/s00229-017-0959-y},
}

@article{Ado94,
  title={Hypergeometric functions and rings generated by monomials},
  author={Alan Adolphson},
  journal={Duke Mathematical Journal},
  year={1994},
  volume={73},
  pages={269--290},
  url={https://api.semanticscholar.org/CorpusID:119806807}
}

@article {AS10,
    AUTHOR = {Adolphson, Alan and Sperber, Steven},
     TITLE = {{$A$}-hypergeometric systems that come from geometry},
   JOURNAL = {Proc. Amer. Math. Soc.},
  FJOURNAL = {Proceedings of the American Mathematical Society},
    VOLUME = {140},
      YEAR = {2012},
    NUMBER = {6},
     PAGES = {2033--2042},
      ISSN = {0002-9939,1088-6826},
   MRCLASS = {33C70 (14F40 52B20)},
  MRNUMBER = {2888191},
MRREVIEWER = {Esra\ Erku\c s-Duman},
       DOI = {10.1090/S0002-9939-2011-11073-6},
       URL = {https://doi.org/10.1090/S0002-9939-2011-11073-6},
}

@misc{Fu26,
      title={The {GKZ} hypergeometric $\mathcal{D}$-module},
      author={Lei Fu},
      year={2026},
      eprint={2602.16941},
      archivePrefix={arXiv},
      primaryClass={math.AG},
      url={https://arxiv.org/abs/2602.16941},
      note={arXiv:2602.16941},
}

@article{Fu16,
  title={$\ell$-adic {GKZ} hypergeometric sheaves and exponential sums},
  author={Lei Fu},
  journal={Advances in Mathematics},
  year={2016},
  volume={298},
  pages={51--88},
  url={https://api.semanticscholar.org/CorpusID:123745480}
}

@article {Miyatani20,
    AUTHOR = {Miyatani, Kazuaki},
     TITLE = {{$p$}-adic generalized hypergeometric equations from the
              viewpoint of arithmetic {$\mathcal{D}$}-modules},
   JOURNAL = {Amer. J. Math.},
  FJOURNAL = {American Journal of Mathematics},
    VOLUME = {142},
      YEAR = {2020},
    NUMBER = {4},
     PAGES = {1017--1050},
      ISSN = {0002-9327,1080-6377},
   MRCLASS = {14F10 (14G20 18A99 32C38 33C20 33E50)},
  MRNUMBER = {4124114},
MRREVIEWER = {Alberto\ Casta\~no Dom\'inguez},
       DOI = {10.1353/ajm.2020.0030},
       URL = {https://doi.org/10.1353/ajm.2020.0030},
}

@article {Miyatani22,
    AUTHOR = {Miyatani, Kazuaki},
     TITLE = {Generalized hypergeometric arithmetic {$\mathscr{D}$}-modules under a {$p$}-adic non-{L}iouvilleness condition},
   JOURNAL = {Proc. Amer. Math. Soc.},
  FJOURNAL = {Proceedings of the American Mathematical Society},
    VOLUME = {150},
      YEAR = {2022},
    NUMBER = {8},
     PAGES = {3215--3229},
      ISSN = {0002-9939,1088-6826},
   MRCLASS = {11S80 (14G20)},
  MRNUMBER = {4439447},
MRREVIEWER = {Haoyu\ Hu},
       DOI = {10.1090/proc/15814},
       URL = {https://doi.org/10.1090/proc/15814},
}

@misc{Berrigid,
  title={Cohomologie rigide et cohomologie rigide \`a supports propres},
  author={Berthelot, P.},
  howpublished={Premi\`ere Partie, pr\'epublication, Universit\'e de Rennes},
  year={1996},
  url={https://www.wstein.org/people/berthelo/publis/Cohomologie_Rigide_I.pdf}
}

@article {Caro06,
    AUTHOR = {Caro, D.},
     TITLE = {D\'evissages des {F}-complexes de $\mathcal{D}$-modules arithm\'etiques en {F}-isocristaux surconvergents},
   JOURNAL = {Invent. math.},
    VOLUME = {166},
      YEAR = {2006},
     PAGES = {397--456},
       URL = {https://doi.org/10.1007/s00222-006-0517-9},
}

@article{Carospfunctor,
author = {Caro, Daniel},
year = {2009},
month = {01},
pages = {453--543},
number = {4},
title = {D-modules arithmétiques associés aux isocristaux surconvergents. Cas lisse},
volume = {137},
journal = {Bulletin de la Société Mathématique de France}
}

@article{Caro2011PleineFS,
  title={Pleine fid{\'e}lit{\'e} sans structure de {Frobenius} et isocristaux partiellement surconvergents},
  author={Daniel Caro},
  journal={Mathematische Annalen},
  year={2011},
  volume={349},
  pages={747--805},
  url={https://api.semanticscholar.org/CorpusID:122296409}
}

@article{Tsuzuki99,
     author = {Tsuzuki, Nobuo},
     title = {On the {Gysin} isomorphism of rigid cohomology},
     journal = {Hiroshima Math. J.},
     volume = {29},
     number = {3},
     year = {1999},
     pages = {479--527},
     language = {en},
     url = {http://dml.mathdoc.fr/item/1206124853}
}

@article{BerthelotIntroduction,
author = {Berthelot, P.},
year = {2002},
pages = {1--80},
volume = {279},
title = {{Introduction à la théorie arithmétique des D-modules}},
journal = {Ast\'erisque}
}

@article{Hochster72,
  author = {Hochster, Melvin},
  title = {Rings of invariants of tori, {Cohen--Macaulay} rings generated by monomials, and polytopes},
  journal = {Annals of Mathematics},
  series = {2},
  volume = {96},
  number = {2},
  year = {1972},
  pages = {318--337},
  url = {https://annals.math.princeton.edu/1972/96-2/p05}
}

@article {defnofLiouville,
    AUTHOR = {Christol, Gilles},
     TITLE = {Un th\'eor\`eme de transfert pour les disques singuliers
              r\'eguliers},
   JOURNAL = {Ast\'erisque},
  FJOURNAL = {Ast\'erisque},
    VOLUME = {119--120},
      YEAR = {1984},
     PAGES = {151--168},
      ISSN = {0303-1179,2492-5926},
   MRCLASS = {12H25 (11S80 34A25)},
  MRNUMBER = {773091},
MRREVIEWER = {W.\ Bartenwerfer},
}

@article {CaroTsuzuki12,
    AUTHOR = {Caro, Daniel and Tsuzuki, Nobuo},
     TITLE = {Overholonomicity of overconvergent {$F$}-isocrystals over
              smooth varieties},
   JOURNAL = {Ann. of Math. (2)},
  FJOURNAL = {Annals of Mathematics. Second Series},
    VOLUME = {176},
      YEAR = {2012},
    NUMBER = {2},
     PAGES = {747--813},
      ISSN = {0003-486X,1939-8980},
   MRCLASS = {14F30 (14F10)},
  MRNUMBER = {2950764},
MRREVIEWER = {Adolfo\ Quir\'os},
       DOI = {10.4007/annals.2012.176.2.2},
       URL = {https://doi.org/10.4007/annals.2012.176.2.2},
}

@article {Carologisocrystal,
    AUTHOR = {Caro, Daniel},
     TITLE = {Log-isocristaux surconvergents et holonomie},
   JOURNAL = {Compos. Math.},
  FJOURNAL = {Compositio Mathematica},
    VOLUME = {145},
      YEAR = {2009},
    NUMBER = {6},
     PAGES = {1465--1503},
      ISSN = {0010-437X,1570-5846},
   MRCLASS = {14F30 (14F10)},
  MRNUMBER = {2575091},
MRREVIEWER = {Adolfo\ Quir\'os},
       DOI = {10.1112/S0010437X09004199},
       URL = {https://doi.org/10.1112/S0010437X09004199},
}

@article {Fang,
    AUTHOR = {Fang, Jiangxue},
     TITLE = {Composition series for {GKZ}-systems},
   JOURNAL = {Trans. Amer. Math. Soc.},
  FJOURNAL = {Transactions of the American Mathematical Society},
    VOLUME = {373},
      YEAR = {2020},
    NUMBER = {5},
     PAGES = {3445--3481},
      ISSN = {0002-9947,1088-6850},
   MRCLASS = {14F10 (14M25 32S60 33C80)},
  MRNUMBER = {4082244},
MRREVIEWER = {Bin\ Zhang},
       DOI = {10.1090/tran/7999},
       URL = {https://doi.org/10.1090/tran/7999},
}

@incollection{Kedlaya22,
  author    = {Kedlaya, Kiran S.},
  title     = {{Frobenius} structures on hypergeometric equations},
  booktitle = {Arithmetic, Geometry, Cryptography, and Coding Theory 2021},
  series    = {Contemporary Mathematics},
  volume    = {779},
  pages     = {133--158},
  publisher = {American Mathematical Society},
  address   = {Providence, RI},
  year      = {2022},
  doi       = {10.1090/conm/779/15673}
}

@book{Dwork90,
  author    = {Dwork, Bernard},
  title     = {Generalized Hypergeometric Functions},
  series    = {Oxford Mathematical Monographs},
  publisher = {Clarendon Press, Oxford University Press},
  address   = {Oxford},
  year      = {1990},
  isbn      = {9780198535676},
  doi       = {10.1093/oso/9780198535676.001.0001}
}

@book{Katz90,
  author    = {Katz, Nicholas M.},
  title     = {Exponential Sums and Differential Equations},
  series    = {Annals of Mathematics Studies},
  volume    = {124},
  publisher = {Princeton University Press},
  address   = {Princeton, NJ},
  year      = {1990},
  doi       = {10.1515/9781400882434}
}

@book{FuAG2006,
  author    = {Fu, Lei},
  title     = {Algebraic Geometry},
  publisher = {Tsinghua University Press},
  address   = {Beijing},
  year      = {2006},
  isbn      = {9787302140801}
}

@article{NootHuyghe2007,
  author  = {Noot-Huyghe, Christine},
  title   = {Finitude de la dimension homologique d'alg{\`e}bres
             d'op{\'e}rateurs diff{\'e}rentiels faiblement compl{\`e}tes
             et {\`a} coefficients surconvergents},
  journal = {Journal of Algebra},
  volume  = {307},
  number  = {2},
  year    = {2007},
  pages   = {499--540},
  doi     = {10.1016/j.jalgebra.2004.09.036}
}

\end{document}